\newtheorem{Theorem}{Theorem}
\newtheorem{Conjecture}[Theorem]{Conjecture}
\newtheorem{Proposition}[Theorem]{Proposition}
\newtheorem{Lemma}[Theorem]{Lemma}
\newtheorem{Definition}[Theorem]{Definition}
\newtheorem{Remark}[Theorem]{Remark}
\numberwithin{equation}{section}
\title{Correlations of sieve weights and distributions of zeros}
\author[Aled Walker]{Aled Walker}
\address{Trinity College, Cambridge, UK, CB2 1TQ}
\email{aw530@cam.ac.uk}
\date{}
\begin{document}
\begin{abstract}
In this note we give two small results concerning the correlations of the Selberg sieve weights. We then use these estimates to derive a new (conditional) lower bound on the variance of the primes in short intervals, and also on the so-called `form factor' for the pair correlations of the zeros of the Riemann zeta function. Our bounds ultimately rely on the estimates of Bettin--Chandee for trilinear Kloosterman fractions.
\end{abstract}
\maketitle

\section{Introduction}
For $z \geqslant 1$, we define the weight 
\begin{equation}
\label{equation rho}
\rho_{z,d}: = \frac{d \mu(d)}{\varphi(d)} \sum\limits_{\substack{ q \leqslant z/d \\ (q,d) = 1}} \frac{\mu^2(q)}{\varphi(q)},
\end{equation}
\noindent writing
\begin{equation}
\label{equation defining lambdaQ}
\lambda_z(n) := \sum\limits_{d \vert n} \rho_{z,d}.
\end{equation}
\noindent These weights arise in the theory of the Selberg sieve. A classical estimate (proved in \cite[Lemma 2]{Go95}, for instance) reads 
\begin{equation}
\label{equation classical estimate}
\sum\limits_{n \leqslant X} \lambda_z(n)^2 = X L(z) + O(z^2),
\end{equation}
\noindent where
\begin{equation}
\label{eq: L of z}
L(z): = \sum\limits_{ q \leqslant z} \frac{\mu^2(q)}{\varphi(q)}.
\end{equation}
\noindent Since $\lambda_z(n) = L(z)$ if all of the prime factors of $n$ are greater than $z$, expression (\ref{equation classical estimate}) may be used as an upper-bound sieve for an interval.

However, sieving is not our present concern. We are motivated instead by a different strand of the literature, in which $\lambda_z(n)$ acts as a proxy for the von Mangoldt function $\Lambda(n)$ and information concerning difficult properties of $\Lambda(n)$ may be gleaned from studying the more tractable properties of $\lambda_z(n)$. Elements of this approach date back to Heath-Brown and his work on the ternary Goldbach problem \cite{HB85}, and it also appears in work of Goldston \cite{Go95} on the variance of primes in short intervals, Friedlander--Goldston \cite{FG96} on the distribution of primes in arithmetic progressions, Goldston--Yildrim \cite{GY01} on the variance of primes in short arithmetic progressions, as well as in the work of Goldston--Gonek--\"{O}zl\"{u}k--Snyder \cite{GGOS00} on the distribution of the zeros of the Riemann zeta function (which we will address at length below). 

In certain contexts the functions derived from $\lambda_z(n)$ have been called `Vaughan's approximation', after Vaughan's papers \cite{Va03, Va03a} which considered in detail the quality of the approximation \[\Big\vert \sum\limits_{\substack{n \leqslant x \\ n \equiv a \, (\text{mod } q)}} \lambda_z(n) - \psi(x;q,a)\Big\vert\] for small $z$. This terminology is used in Fiorilli's work \cite{Fi17}, say.

We will be interested in two estimates for $\lambda_z(n)$: the estimate (\ref{equation classical estimate}) and the off-diagonal correlations $\lambda_z(n) \lambda_z(n+k)$. An old result for these off-diagonal correlations is as follows: 
\begin{Proposition}
\label{Prop: classical off diagonal}
Let $1 \leqslant z \leqslant X$, and let $k$ be an integer such that $1 \leqslant \vert k\vert \leqslant X$. Let $X_1 = \max(0,-k)$ and $X_2 = \min(X,X-k)$. Then 
\begin{equation}
\label{eq: off diagonal classical}
\sum\limits_{ X_1 < n \leqslant X_2} \lambda_z(n) \lambda_z(n+k) = \mathfrak{S}(k)(X- \vert k\vert ) + O\Big( \frac{ k \tau(k) X}{ \varphi(k) z}\Big) + O(z^2),
\end{equation} where 
\begin{equation}
\label{eq: singular series}
\mathfrak{S}(k) = \begin{cases} 
2 \prod\limits_{p>2} \Big(1 - \frac{1}{(p-1)^2}\Big)\cdot \prod\limits_{\substack{p > 2 \\ p \vert k}} \Big( \frac{p-1}{p-2}\Big)  & k \text{ even;} \\
0 & k \text{ odd}. \end{cases} 
\end{equation}
\end{Proposition}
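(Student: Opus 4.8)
The plan is to open the correlation over the sieve parameters $d_1,d_2$, split off a main term governed by a congruence count, and then evaluate that main term by a diagonalisation of the resulting quadratic form. \textbf{Opening the sum.} Since $\rho_{z,d}$ is supported on squarefree $d\leqslant z$, substituting $\lambda_z(n)=\sum_{d\mid n}\rho_{z,d}$ and interchanging summation gives
\[
\sum_{X_1<n\leqslant X_2}\lambda_z(n)\lambda_z(n+k)=\sum_{d_1,d_2\leqslant z}\rho_{z,d_1}\rho_{z,d_2}\,N(d_1,d_2),
\]
with $N(d_1,d_2)=\#\{X_1<n\leqslant X_2:\ d_1\mid n,\ d_2\mid n+k\}$. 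By the Chinese Remainder Theorem the system $n\equiv 0\pmod{d_1}$, $n\equiv -k\pmod{d_2}$ is solvable exactly when $(d_1,d_2)\mid k$, and then the solutions lie in one residue class modulo $[d_1,d_2]$; hence $N(d_1,d_2)=\mathbf{1}_{(d_1,d_2)\mid k}\,(X-|k|)/[d_1,d_2]+O(1)$. The $O(1)$ contributes $O\big((\sum_{d\leqslant z}|\rho_{z,d}|)^2\big)=O(z^2)$: one has $|\rho_{z,d}|\leqslant\tfrac{d}{\varphi(d)}L(z/d)$, and combining $\tfrac{d}{\varphi(d)}=\sum_{e\mid d}\tfrac{\mu^2(e)}{\varphi(e)}$ with a divisor switch gives $\sum_{d\leqslant z}|\rho_{z,d}|\ll z$ — precisely the mechanism behind the $O(z^2)$ in (\ref{equation classical estimate}). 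It remains to evaluate $(X-|k|)\,\Sigma(z,k)$, where $\Sigma(z,k):=\sum_{(d_1,d_2)\mid k}\rho_{z,d_1}\rho_{z,d_2}/[d_1,d_2]$.

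\textbf{Diagonalising.} The heart of the argument is the identity
\[
\sum_{\substack{d\leqslant z\\ e\mid d}}\frac{\rho_{z,d}}{d}=\frac{\mu(e)}{\varphi(e)}\ \ (e\leqslant z),\qquad =0\ \ (e>z),
\]
which one proves by writing $d=ef$ with $(e,f)=1$, inserting the definition of $\rho_{z,d}$, and collapsing the double sum over $f$ and the inner parameter $q$ via $\sum_{f\mid m}\mu(f)=\mathbf{1}_{m=1}$ — the point being that the truncation $q\leqslant z/(ef)$ reads $fq\leqslant z/e$, which no longer constrains $f$ once $fq$ is fixed. Substituting $(d_1,d_2)=\sum_{e\mid(d_1,d_2)}\varphi(e)$ and $\mathbf{1}_{(d_1,d_2)\mid k}=\sum_{f\mid(d_1,d_2),\,(f,k)=1}\mu(f)$, and noting that ``$e\mid(d_1,d_2)$ and $f\mid(d_1,d_2)$'' is equivalent to ``$[e,f]\mid d_1$ and $[e,f]\mid d_2$'', the identity collapses $\Sigma(z,k)$ to
\[
\Sigma(z,k)=\sum_{\substack{e,\,f:\ (f,k)=1\\ [e,f]\leqslant z}}\varphi(e)\,\mu(f)\,\frac{\mu^2([e,f])}{\varphi([e,f])^2}.
\]
Writing $e=e_1e_2$ with $e_1=(e,f)$ (so $[e,f]=e_2f$), summing out $e_1$ via $\sum_{e_1\mid f}\varphi(e_1)=f$, and setting $r=e_2f$ leaves the single multiplicative sum $\Sigma(z,k)=\sum_{r\leqslant z}h(r)$, where $h$ is multiplicative with $h(p^j)=0$ for $j\geqslant 2$, $h(p)=\tfrac{1}{p-1}$ for $p\mid k$, and $h(p)=\tfrac{-1}{(p-1)^2}$ for $p\nmid k$.

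\textbf{Identifying the singular series.} Since $\sum_r|h(r)|=\prod_{p\mid k}(1+\tfrac1{p-1})\prod_{p\nmid k}(1+\tfrac1{(p-1)^2})$ converges, $\Sigma(z,k)=\prod_p(1+h(p))-\sum_{r>z}h(r)$. The Euler product is
\[
\prod_p\bigl(1+h(p)\bigr)=\prod_{p\mid k}\frac{p}{p-1}\cdot\prod_{p\nmid k}\Bigl(1-\frac{1}{(p-1)^2}\Bigr),
\]
which vanishes when $k$ is odd (the factor $p=2$ of the second product is $0$, matching $\mathfrak{S}(k)=0$) and, after redistributing Euler factors using $1-\tfrac1{(p-1)^2}=\tfrac{p(p-2)}{(p-1)^2}$, equals $\mathfrak{S}(k)$ when $k$ is even. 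For the tail, factor $r=r_1r_2$ where $r_1\mid\operatorname{rad}(k)$ and $(r_2,k)=1$, so that $\sum_{r>z}h(r)=\sum_{r_1\mid\operatorname{rad}(k)}\tfrac1{\varphi(r_1)}\sum_{r_2>z/r_1,\,(r_2,k)=1}\tfrac{\mu(r_2)}{\varphi(r_2)^2}$; writing $\varphi(r_2)^{-2}=r_2^{-2}\sum_{e\mid r_2}\beta(e)$ with $\beta$ multiplicative and $\sum_e|\beta(e)|/e<\infty$ reduces the inner sum to tails of $\sum_m\mu(m)m^{-2}$ and bounds it by $O(r_1/z)$, whence $\sum_{r>z}h(r)\ll\tfrac1z\prod_{p\mid k}(1+\tfrac{p}{p-1})\ll\tfrac{k\tau(k)}{\varphi(k)z}$. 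Multiplying by $X-|k|\leqslant X$ and adding the $O(z^2)$ from before gives (\ref{eq: off diagonal classical}).

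\textbf{Main obstacle.} The delicate step is the diagonalisation: establishing the identity for $\sum_{e\mid d}\rho_{z,d}/d$ and then correctly interleaving the two Dirichlet-convolution expansions (of $(d_1,d_2)$ and of $\mathbf{1}_{(d_1,d_2)\mid k}$) against the truncation $d_i\leqslant z$. Extracting $\mathfrak{S}(k)$ from the Euler product is then pure bookkeeping, and the only subtlety in the tail estimate is the $\beta$-smoothing, needed to turn a crude $z^{-1+\varepsilon}$ into the sharp $z^{-1}$.
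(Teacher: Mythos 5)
Your proof is correct. The paper itself only cites Goldston \cite[Lemma 2]{Go95} for this statement, and your argument is essentially that standard one: opening the divisor sums, counting the congruence solutions (the $O(1)$ per pair giving $O\big((\sum_{d\le z}|\rho_{z,d}|)^2\big)=O(z^2)$, where your divisor-switch bound $\sum_{d\le z}|\rho_{z,d}|\ll z$ does check out), and then evaluating the bilinear form; your diagonalisation identity $\sum_{d\le z,\,e\mid d}\rho_{z,d}/d=\mu(e)/\varphi(e)$ is a clean, self-contained route to the singular series plus the $O(k\tau(k)/\varphi(k)z)$ tail, in place of the paper's appeal to expression (2.10) of \cite{Go95}.
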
 
\begin{proof}
See \cite[Lemma 2]{Go95}. 
\end{proof}
\noindent Estimates (\ref{equation classical estimate}) and (\ref{eq: off diagonal classical}) are non-trivial only in the range $\log X/\log z > 2$, which is the familiar `square-root barrier' for the upper-bound sieve. Our main technical results in this note extend the range of applicable $z$ beyond $X^{1/2}$. 

For the off-diagonal case, we can proceed unconditionally.
\begin{Proposition} 
\label{Proposition lambdaQ correlation}
With notation as in Proposition \ref{Prop: classical off diagonal}, for any $\varepsilon >0$ we have
\[\sum\limits_{X_1 < n \leqslant X_2} \lambda_z(n) \lambda_z(n+k) = \mathfrak{S}(k) (X - \vert k\vert) + O\Big(\frac{k \tau(k)X}{ \varphi(k)z}\Big) + O_\varepsilon(\min(z^2,X^{\frac{47}{74} + \varepsilon}z^{\frac{53}{74}})).\]
\end{Proposition}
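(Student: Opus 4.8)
The plan is to expand the correlation into a weighted count of shifted lattice points, peel off the classical main term, and estimate the fractional-part error using Vaaler's approximation to the sawtooth function together with the Bettin--Chandee bound for trilinear forms in Kloosterman fractions.

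Since $\rho_{z,d}$ is supported on squarefree $d\le z$, opening $\lambda_z=\rho_{z,\cdot}*1$ gives $\sum_{X_1<n\le X_2}\lambda_z(n)\lambda_z(n+k)=\sum_{d_1,d_2\le z}\rho_{z,d_1}\rho_{z,d_2}N(d_1,d_2)$, where $N(d_1,d_2)=\#\{X_1<n\le X_2:d_1\mid n,\ d_2\mid n+k\}$. By the Chinese Remainder Theorem this vanishes unless $(d_1,d_2)\mid k$, in which case the admissible $n$ lie in one residue class modulo $\ell:=[d_1,d_2]$, so $N(d_1,d_2)=\frac{(X_2-X_1)(d_1,d_2)}{d_1d_2}+\psi\big(\frac{X_1-a}{\ell}\big)-\psi\big(\frac{X_2-a}{\ell}\big)$ with $\psi(t)=\{t\}-\frac12$ and $a=a(d_1,d_2,k)$ the relevant residue. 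Summed against $\rho_{z,d_1}\rho_{z,d_2}$, the main term is exactly the quantity evaluated in \cite[Lemma~2]{Go95}, equal to $\mathfrak S(k)(X-|k|)$, while bounding $\psi$ trivially recovers the error $O(z^2)$ of Proposition~\ref{Prop: classical off diagonal}. It therefore remains to prove $\mathcal E:=\sum_{(d_1,d_2)\mid k}\rho_{z,d_1}\rho_{z,d_2}\big[\psi\big(\frac{X_1-a}{\ell}\big)-\psi\big(\frac{X_2-a}{\ell}\big)\big]\ll_\varepsilon X^{3/4+\varepsilon}z^{15/32}$, and we may assume $z^{49/32}\le X^{3/4+\varepsilon}$ (otherwise $O(z^2)$ already suffices) and, after the shift $n\mapsto n-X_1$, that $k\ge1$, $X_1=0$, $X_2=X-k$.

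Write $g=(d_1,d_2)\mid k$ and $d_i=ge_i$; then $(e_1,e_2)=1$ and $(g,e_1e_2)=1$ (as $d_1,d_2$ are squarefree), and $\ell=ge_1e_2$. A CRT computation gives $-a/\ell\equiv(k/g)\overline{e_1}/e_2\pmod1$, with $\overline{e_1}$ the inverse of $e_1$ modulo $e_2$; hence $\psi\big(\frac{X_1-a}{\ell}\big)=\psi\big(\frac{(k/g)\overline{e_1}}{e_2}\big)$ and $\psi\big(\frac{X_2-a}{\ell}\big)=\psi\big(\frac{X-k}{ge_1e_2}+\frac{(k/g)\overline{e_1}}{e_2}\big)$. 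The terms with $e_1=1$ or $e_2=1$ carry no Kloosterman fraction and are $\ll z^{1+\varepsilon}$, which is admissible. For a parameter $H\ge1$, Vaaler's approximation writes $\psi(t)=\sum_{1\le|h|\le H}c_he(ht)+r_H(t)$ with $|c_h|\le\frac{1}{2\pi|h|}$ and $r_H$ dominated by a nonnegative trigonometric polynomial of degree $\le H$ with coefficients $\ll1/H$; both pieces give sums of the same shape, so one may treat the main polynomial. For the first sawtooth, a dyadic decomposition in $h,e_1,e_2$ then produces a trilinear form $\sum_h\sum_{e_1}\sum_{e_2}\alpha_h\beta_{e_1}\gamma_{e_2}e\big(\frac{h(k/g)\overline{e_1}}{e_2}\big)$ with $\ell^2$-bounded coefficients --- precisely the object bounded nontrivially by Bettin--Chandee (after the routine reduction to $(k/g,e_2)=1$ and viewing $h(k/g)$ as the numerator variable).

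The main obstacle is the second sawtooth, which additionally carries the phase $e\big(\frac{h(X-k)}{ge_1e_2}\big)$; this does not factor over $h,e_1,e_2$ and so is not directly admissible in Bettin--Chandee. The plan is to remove it by reciprocity: writing $X-k=g\lfloor(X-k)/g\rfloor+r$ with $0\le r<g$ and using $\frac{1}{e_1e_2}\equiv\frac{\overline{e_1}}{e_2}+\frac{\overline{e_2}}{e_1}\pmod1$ (here $\overline{e_2}$ is the inverse of $e_2$ modulo $e_1$), one expresses $\frac{h(X-k)}{ge_1e_2}$ as a sum of Kloosterman-type fractions with moduli $e_1$ and $e_2$ plus a residual phase $e\big(\frac{rh}{ge_1e_2}\big)$ that is essentially smooth in the product $e_1e_2$ and is stripped off by partial summation; when $k$, hence $g$, is large the correlation is short and no reduction is needed. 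One is then left with trilinear Kloosterman-fraction sums --- now also of the shape $e\big((\cdots)\overline{e_2}/e_1\big)$, handled identically --- and a final choice of $H$ balancing the Vaaler truncation against the Bettin--Chandee bound yields the exponents $\frac34+\varepsilon$ on $X$ and $\frac{15}{32}$ on $z$. Beyond this phase removal and the optimisation of $H$, every step is either the classical computation of \cite[Lemma~2]{Go95} or a direct application of the Bettin--Chandee theorem.
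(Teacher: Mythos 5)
Your outline (sharp cutoff, sawtooth/Vaaler expansion, then Bettin--Chandee) is a genuinely different route from the paper, which instead smooths the sum with a compactly supported $f_\delta$, applies Poisson summation, and feeds the resulting dual sum directly into the Bettin--Chandee bound. Your route is essentially that of Coppola--Laporta \cite[Lemma 3]{CL16}, which the paper cites precisely as the unsmoothed (and ``rather complicated'') alternative. Unfortunately, the step you yourself flag as the main obstacle contains a genuine gap.

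The problem is the reciprocity treatment of the second sawtooth. Writing $\frac{hm}{e_1e_2}\equiv \frac{hm\overline{e_1}}{e_2}+\frac{hm\overline{e_2}}{e_1}\pmod 1$ does not produce ``a sum of trilinear Kloosterman-fraction sums, each handled identically'': the two fractions sit inside the \emph{same} exponential, so after combining with the original phase you are left with a single trilinear form whose phase is $e\bigl(\theta_1 h\overline{e_1}/e_2+\theta_2 h\overline{e_2}/e_1\bigr)$ with $\theta_2=m\asymp X/g$. The Bettin--Chandee theorem (in the Remark 1 form quoted in the paper) allows a perturbation $g_{c,\theta}(d_1,d_2)$ only when it is a $C^1$ function with derivatives $\ll K/(x^2y)$ and $\ll K/(xy^2)$; the arithmetic fraction $\theta_2h\overline{e_2}/e_1$ is nothing of the sort, nor can it be absorbed into the single-variable coefficients $\beta_{e_1}$, $\gamma_{e_2}$, $\alpha_h$. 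So the object you arrive at is not covered by the theorem you invoke. The fix is that no reciprocity is needed at all: the offending phase $e\bigl(h(X-k)/(ge_1e_2)\bigr)$ is \emph{already} an admissible perturbation $g_{c,\theta}$ in the sense above, with $K=h(X-k)/g$, at the cost of the factor $\bigl(1+(|\theta|C+K)/(M_1M_2)\bigr)^{1/2}$. This is exactly how the paper handles the analogous phase $e(-cxN/(dd_1d_2))$ produced by Poisson summation.

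Two further points. First, your opening reduction is reversed: when $z^{49/32}\leqslant X^{3/4+\varepsilon}$ the minimum equals $z^2$ and the classical Proposition~\ref{Prop: classical off diagonal} already gives the result, so the case you must actually treat is $z^{49/32}>X^{3/4+\varepsilon}$. Second, even after repairing the second sawtooth as above, your unsmoothed setup makes the final bookkeeping delicate in a way the paper's does not: with a fixed Vaaler cutoff $H$ the quantity $(|\theta|C+K)/(M_1M_2)\asymp HX/(gM_1M_2)$ is not uniformly bounded over the dyadic boxes in $e_1,e_2,h$, whereas the smoothing in the paper truncates the dual frequency at $\asymp dd_1d_2/(N\delta)$, so that this factor is always $\ll\delta^{-1/2}X^{\varepsilon}$. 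Your exponents do balance at the extreme point $z=X^{8/15}$, $M_1=M_2=z$, but you have not shown that the intermediate dyadic ranges (where this factor degrades) still sum to $O_\varepsilon(X^{3/4+\varepsilon}z^{15/32})$; this needs to be carried out explicitly before the proof can be considered complete.
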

\noindent For fixed $k$ this bound is non-trivial if $X^{\varepsilon} \leqslant z \leqslant X^{\frac{27}{53} - \varepsilon}$. (Here we follow the convention that $\varepsilon$ refers to a small parameter that may change from line to line.)

Our proof of Proposition \ref{Proposition lambdaQ correlation} is short, and ultimately relies on the deep results of Bettin--Chandee on trilinear Kloosterman fractions \cite{BC18} (which are themselves based on the pioneering work of Duke--Friedlander--Iwaniec \cite{DFI97} on bilinear Kloosterman fractions). The result is new, but it came to our attention while preparing the final version of our manuscript that something similar does appear in earlier work of Coppola--Laporta \cite[Lemma 3]{CL16}. Their work only used the older DFI bound, and therefore only produced a non-trivial error term in the weaker range $z \leqslant X^{48/95}$; furthermore their way of proceeding from the Kloosterman fraction bounds to correlation bounds was more complicated (as they used sharp cut-offs, as opposed to smooth cut-offs). However, the core ideas seem to be broadly similar. 

The work of Bettin--Chandee was also used by Fouvry--Radziwi\l\l\ \cite{FR18, FR18a}, who have established very strong equidistribution results for bilinear sums in arithmetic progressions. In turn, they used these bounds (in \cite[Corollary 1.5]{FR18}) to establish equidistribution of sieve weights over certain arithmetic progressions with large modulus, where the level of distribution of the sieve weight can be as large as $X^{1/2 + \delta}$ for some $\delta >0$. Their Corollary 1.5 cannot be directly applied for our purposes (the range of permissible arithmetic progressions is too restrictive and they only save a power of $\log X$), but it seems likely that their methods could be adapted to yield a version of Proposition \ref{Proposition lambdaQ correlation}. On balance, then, we feel that our main contribution here is the direct manner of proof rather than the statement itself.

For the diagonal case, we are forced to proceed conditionally to obtain a power saving. 
\begin{Proposition}[Method of Hooley, \cite{Ho00}]
\label{Proposition lambdaQ squared on RH}
Assume the Riemann Hypothesis. Then, for all $1 \leqslant z \leqslant X$ and for all $\varepsilon>0$,
\[\sum\limits_{n \leqslant X} \lambda_z^2(n) = XL(z) + O_{\varepsilon}(\min(z^2,X^{1/2 + \varepsilon} z^{1/2})).\]
\end{Proposition}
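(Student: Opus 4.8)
The plan is to represent the sum as a contour integral and push the line of integration onto the critical line, following Hooley. Setting $\rho_z(d) := \rho_{z,d}$, so that $\lambda_z = \rho_z * 1$, one has for $\Re s > 1$ the identity
\[\sum_{n \geqslant 1}\frac{\lambda_z(n)^2}{n^s} = \zeta(s)F_z(s), \qquad F_z(s) := \sum_{d_1, d_2 \leqslant z}\frac{\rho_{z,d_1}\rho_{z,d_2}}{[d_1,d_2]^s},\]
where $F_z$ is a Dirichlet polynomial supported on integers at most $z^2$; in particular $F_z$ is entire and satisfies $F_z(s) \ll_\varepsilon z^{2+\varepsilon}$ for $\Re s \geqslant 0$. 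A truncated Perron formula on the line $\Re s = 1 + 1/\log X$, cut off at height $T := X^{1/2}$, expresses $\sum_{n \leqslant X}\lambda_z(n)^2$ as $\frac{1}{2\pi i}\int_{1+1/\log X - iT}^{1+1/\log X + iT}\zeta(s)F_z(s)\frac{X^s}{s}\,ds$ up to a tail error that is $O_\varepsilon(X^{1/2+\varepsilon})$, since the Dirichlet coefficients $\lambda_z(n)^2$ are $\ll_\varepsilon X^\varepsilon$ for $n \leqslant X$.

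Next I would shift this contour to $\Re s = \tfrac12 + \varepsilon$. This is where RH enters: it guarantees that $\zeta$ is holomorphic and nonvanishing in the region crossed apart from the simple pole at $s = 1$, and it furnishes the Lindel\"of-type bound $\zeta(\tfrac12 + \varepsilon + it) \ll_\varepsilon (1+|t|)^\varepsilon$. The residue at $s = 1$ contributes $XF_z(1) = X\sum_{d_1,d_2 \leqslant z}\rho_{z,d_1}\rho_{z,d_2}/[d_1,d_2] = XL(z)$, the final equality being the classical Selberg-sieve diagonalisation identity that already underlies (\ref{equation classical estimate}). To control everything else I need bounds for $F_z$ on vertical lines. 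On $\Re s = 1$ the triangle inequality gives $F_z(1+it) \ll_\varepsilon z^\varepsilon$ for all real $t$; the essential input is that, under RH, $F_z(it) \ll_\varepsilon z^{1+\varepsilon}(1+|t|)^\varepsilon$ for all real $t$, whereas the trivial bound there is only $O_\varepsilon(z^{2+\varepsilon})$. Granting these two estimates, the Phragm\'en--Lindel\"of convexity principle (legitimate since $F_z$ is a Dirichlet polynomial, hence of finite order in the strip) yields $F_z(\sigma+it) \ll_\varepsilon z^{1-\sigma+\varepsilon}(1+|t|)^\varepsilon$ uniformly for $0 \leqslant \sigma \leqslant 1$. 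Inserting this bound and the bound for $\zeta$ into the two horizontal segments at height $T = X^{1/2}$ and into the shifted vertical line $\Re s = \tfrac12 + \varepsilon$, a short computation shows that all of these pieces, together with the Perron tail error, contribute $O_\varepsilon(X^{1/2+\varepsilon}z^{1/2})$. This yields $\sum_{n \leqslant X}\lambda_z(n)^2 = XL(z) + O_\varepsilon(X^{1/2+\varepsilon}z^{1/2})$, and intersecting with the classical bound $O(z^2)$ from (\ref{equation classical estimate}) gives the stated minimum.

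The step I expect to be the main obstacle is the RH-conditional estimate $F_z(it) \ll_\varepsilon z^{1+\varepsilon}(1+|t|)^\varepsilon$. To prove it, one expands $[d_1,d_2]^{-it} = (d_1,d_2)^{it}\,d_1^{-it}d_2^{-it}$ and writes $(d_1,d_2)^{it} = \sum_{e \mid (d_1,d_2)}(\mathrm{Id}_{it} * \mu)(e)$, obtaining $F_z(it) = \sum_{e}(\mathrm{Id}_{it}*\mu)(e)\big(\sum_{e \mid d,\ d \leqslant z}\rho_{z,d}\,d^{-it}\big)^2$, where $|(\mathrm{Id}_{it}*\mu)(e)| \ll_\varepsilon e^\varepsilon$. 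A short computation with the multiplicative identities behind $\rho_{z,d}$ --- in particular the fact, readily checked from (\ref{equation rho}), that $\sum_{d \leqslant z}\rho_{z,d} = M(z)$ is the Mertens function, together with its coprimality-restricted and dyadically localised twists --- shows that each inner sum is, up to bounded multiplicative weights handled by partial summation, a twisted partial sum $\sum_{m \leqslant z/e}\mu(m)m^{-it}$ of the M\"obius function. Under RH such a sum is $O_\varepsilon((z/e)^{1/2+\varepsilon}(1+|t|)^\varepsilon)$ --- an estimate obtained by contour-shifting $1/\zeta(s)$, and squarely within Hooley's toolkit --- and summing the square of this over $e \leqslant z$ against $e^\varepsilon$ produces the bound $z^{1+\varepsilon}(1+|t|)^\varepsilon$, as required.
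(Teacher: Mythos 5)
Your proposal is correct and follows essentially the same route as the paper: factor the Dirichlet series as $\zeta(s)\mathbb{F}_z(s)$, apply Perron's formula and shift to $\Re s = \tfrac12+\varepsilon$ picking up the residue $X\mathbb{F}_z(1)=XL(z)$, with the key input being an RH-conditional bound $\mathbb{F}_z(\sigma+it)\ll z^{1-\sigma+\varepsilon}(|t|+2)^{\varepsilon}$ obtained by decomposing over $e=(d_1,d_2)$ (your $g(e)=(\mathrm{Id}_{it}*\mu)(e)$ is exactly Hooley's $\varphi(u,s)\mu^2(u)$ from \eqref{FQmanipulation}) and reducing the inner sums to coprimality-restricted, $d/\varphi(d)$-twisted M\"obius sums controlled by $1/\zeta$ under RH. The only cosmetic difference is that you prove the $\mathbb{F}_z$ bound on the two lines $\sigma=0$ and $\sigma=1$ and interpolate by Phragm\'en--Lindel\"of, whereas the paper bounds $\mathbb{F}_z$ directly throughout $\sigma>\tfrac12+\varepsilon$ via Perron applied to $M_{D,k}(s)$; both are legitimate, and the hard analytic content is identical.
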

\noindent This bound is non-trivial if $z \leqslant X^{1-\varepsilon}$. As we will explain in Section \ref{Sec:Hooley method}, the proof of Proposition \ref{Proposition lambdaQ squared on RH} is a straightforward adaptation of a method of Hooley, who studied the weight $\lambda_z(n)$ as part of his long series of papers on the Barban--Davenport--Halberstam theorem. Assuming RH is of course regrettable, but, for the applications we have in mind, this assumption is standard. 

The proofs of Propositions \ref{Proposition lambdaQ correlation} and \ref{Proposition lambdaQ squared on RH} will be deferred to Sections \ref{Sec:offdiagonal} and \ref{Sec:Hooley method} respectively. For the rest of this introduction, we will discuss our two applications.
\\

Firstly, we wish to study the pair correlations of the zeros of the Riemann zeta function. In particular we wish to understand the behaviour of the function $F_T(\alpha)$, which was first introduced by Montgomery in his seminal paper \cite{Mo73}.

\begin{Definition}[Two point form factor\footnote{This is terminology from random matrix theory, which we learnt from Lagarias--Rogers \cite{LR19}.}]
\label{Definition Montgomery form factor}
For $\alpha \in \mathbb{R}$ and $T\geqslant 2$ we define
\[ F_T(\alpha): = \Big(\frac{T \log T}{2\pi} \Big)^{-1} \sum\limits_{ 0 < \gamma, \gamma^\prime \leqslant T} e\Big(\alpha \frac{\log T}{2\pi}(\gamma - \gamma^\prime)\Big) w( \gamma - \gamma^\prime),\] where $w(u) = 4/(4+ u^2)$, and where $\gamma,\gamma^\prime$ range over the imaginary parts of the zeros of the Riemann zeta function $\zeta(s)$ (counted with multiplicity).  Here, as throughout, we let $e(\alpha)$ be a shorthand notation for $e^{2\pi i \alpha}$. 
\end{Definition}
\noindent  Apart from the weight function $w$, which might seem a little odd at first sight, the function $F_T(\alpha)$ is evidently a natural object to consider. Indeed, it is an old fact, going back to Riemann, that the number of zeros with imaginary part between $0$ and $T$ is asymptotically equal to $\frac{1}{2\pi} T\log T$. The average gap between consecutive zeros is therefore $2 \pi / \log T$, and so the function $F_T(\alpha)$ measures the (weighted) Fourier transform of the gaps between the zeros, measured at the scale of the average gap. 

We observe the trivial facts that $F_T(\alpha)= F_T(-\alpha) = \overline{F_T(\alpha)}$, so from now on we only consider $\alpha \geqslant 0$. In \cite{Mo73}, Montgomery proved the following asymptotic result for $F_T(\alpha)$: 

\begin{Theorem}[Theorem 1, \cite{Mo73}]
\label{Theorem Montgomery first theorem}
Assume the Riemann Hypothesis. Let $\varepsilon >0$ and let $0 \leqslant \alpha \leqslant 1 - \varepsilon$. Then \[ F_T(\alpha) = T^{-2  \alpha} (\log T)(1 + o_\varepsilon(1)) + \alpha  + o_\varepsilon(1)\] as $T \rightarrow \infty$. 
\end{Theorem}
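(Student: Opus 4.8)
The plan is to follow Montgomery's original argument, driven by the explicit formula for $\zeta'/\zeta$. The first step is a Fourier identity for the weight: an elementary computation shows that
\[ w(a-b)=\frac{2}{\pi}\int_{-\infty}^{\infty}\frac{dt}{\bigl(1+(t-a)^2\bigr)\bigl(1+(t-b)^2\bigr)}\qquad(a,b\in\mathbb{R}), \]
equivalently, $\tfrac{\pi}{2}w$ is the self-convolution of $t\mapsto(1+t^2)^{-1}$. Writing $x:=T^{\alpha}\geqslant 1$ and expanding the product of $\gamma$-sums, this turns the definition of $F_T(\alpha)$ into
\[ \Bigl(\frac{T\log T}{2\pi}\Bigr)F_T(\alpha)=\frac{2}{\pi}\int_{-\infty}^{\infty}\Bigl|\,\sum_{0<\gamma\leqslant T}\frac{x^{i\gamma}}{1+(t-\gamma)^2}\,\Bigr|^{2}\,dt, \]
so the whole problem reduces to understanding the mean square of $D(t):=\sum_{0<\gamma\leqslant T}x^{i\gamma}\bigl(1+(t-\gamma)^2\bigr)^{-1}$.

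The heart of the matter is Montgomery's explicit-formula lemma, which is where the Riemann Hypothesis enters: it lets us write each zero as $\rho=\tfrac12+i\gamma$ with $\gamma\in\mathbb{R}$, so that $(1+(t-\gamma)^2)^{-1}$ is a genuine nonnegative bump centred at $\gamma$. The lemma asserts that, on RH, for $x\geqslant 1$ and real $t$,
\[ \sum_{\gamma}\frac{x^{i\gamma}}{1+(t-\gamma)^2}=M(x,t)-\frac{x^{-1/2}}{2}\sum_{n}\Lambda(n)\Bigl(\frac{x}{n}\Bigr)^{it}b_n+O\!\Bigl(\frac{\log(|t|+2)}{\sqrt{x}}\Bigr), \]
where $0\leqslant b_n\leqslant 1$ and the Dirichlet polynomial has effective length $x^{1+o(1)}$ (the weights being concentrated near $n=x$), and where the archimedean main term satisfies $M(x,t)=\bigl(\tfrac12+o(1)\bigr)x^{-1+it}\log(|t|+2)$ for $t$ in the range that matters. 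The logarithm in $M$ is precisely the functional equation at work: the contour kernel used to manufacture $(1+(t-\gamma)^2)^{-1}$ has a pole at $s=-\tfrac12+it$, and there the functional equation makes $-\zeta'/\zeta$ of size $\asymp\log(|t|+2)$. One proves the lemma by writing $D(t)$ as a sum of residues of $-\frac{\zeta'}{\zeta}(s)\,x^{s-1/2}K(s)$ for a suitable kernel $K$, shifting the contour, and collecting the residues at the zeros, at $s=1$, and at the poles of $K$. Finally, replacing $\sum_{\gamma}$ by $\sum_{0<\gamma\leqslant T}$ in $D(t)$ and $\int_{\mathbb{R}}$ by $\int_0^T$ costs only $o(1)$ after normalisation, since $\sum_{\gamma\notin(0,T]}(1+(t-\gamma)^2)^{-1}\ll\log(|t|+2)$ and the integrand is negligible for $|t|\gg T$.

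It then remains to substitute this into the mean square and expand $|D(t)|^2$. The term $\tfrac{2}{\pi}\int_0^T|M(x,t)|^2\,dt=\bigl(\tfrac14+o(1)\bigr)\tfrac{2}{\pi}\,x^{-2}(\log T)^2\,T$ becomes, on dividing by $\tfrac{T\log T}{2\pi}$, exactly $(1+o_\varepsilon(1))T^{-2\alpha}\log T$. The square of the Dirichlet polynomial is dispatched by the mean value theorem for Dirichlet polynomials (Montgomery--Vaughan), which gives $\int_0^T|\sum_n\Lambda(n)(x/n)^{it}b_n|^2\,dt=\sum_n\Lambda(n)^2 b_n^2(T+O(n))$: the diagonal $T\sum_n\Lambda(n)^2 b_n^2$, evaluated via the prime number theorem and partial summation against $b_n$, contributes precisely $\alpha+o(1)$ after normalisation, while the off-diagonal $O(\sum_n n\Lambda(n)^2 b_n^2)=O(x\log x)$ contributes $O\bigl(x\log x/(T\log T)\bigr)=O(T^{\alpha-1})=o_\varepsilon(1)$ --- and it is here, and only here, that the restriction $\alpha\leqslant 1-\varepsilon$ is needed. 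The cross term $x^{-1/2}\int_0^T M(x,t)\,\overline{\sum_n\Lambda(n)(x/n)^{it}b_n}\,dt$ is $\ll x^{-3/2}\log T\sum_n\Lambda(n)b_n/\log n\ll x^{-1/2}\log T/\log x$, hence negligible, as are the contributions of the error term $O(\log(|t|+2)/\sqrt x)$ and of the tails discarded above. Collecting everything yields $F_T(\alpha)=(1+o_\varepsilon(1))T^{-2\alpha}\log T+\alpha+o_\varepsilon(1)$.

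The main obstacle is the explicit-formula lemma itself: one must pin down the archimedean main term $M(x,t)$ precisely --- in particular the $\log(|t|+2)$ inherited from the functional equation --- and show that everything else really does collapse to a short prime-power Dirichlet polynomial with a genuine power-saving error, all under RH. Everything afterwards is the mean value theorem for Dirichlet polynomials and the prime number theorem, plus the bookkeeping needed to see that the arithmetic content of $F_T(\alpha)$ lives entirely in the diagonal term $\alpha$, the term $T^{-2\alpha}\log T$ being an ``archimedean'' contribution that decays as soon as $\alpha$ is bounded away from $0$.
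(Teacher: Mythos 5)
The paper does not prove this result---it is quoted verbatim from Montgomery [Mo73]---and your sketch is a faithful, essentially correct reconstruction of Montgomery's original argument: the Cauchy-kernel convolution identity for $w$, the RH explicit-formula lemma expressing $\sum_\gamma x^{i\gamma}(1+(t-\gamma)^2)^{-1}$ as a weighted prime-power Dirichlet polynomial plus an archimedean term $\asymp \tfrac12 x^{-1+it}\log(|t|+2)$ coming from the pole at $\Re s=-\tfrac12$ via the functional equation, and the Montgomery--Vaughan mean value theorem, with the constraint $\alpha\leqslant 1-\varepsilon$ entering exactly where you say it does (the off-diagonal term $O(x\log x)$ versus $T\log T$). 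The only cosmetic inaccuracy is the shape of the residual error in the explicit-formula lemma, which in Montgomery's statement is $O(x^{1/2}(|t|+2)^{-1})$ rather than $O(\log(|t|+2)\,x^{-1/2})$; this is harmless, as either form is negligible after integration over $[0,T]$ and normalisation.
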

\noindent In particular, for a fixed $\alpha \in (0,1)$ one has \[ F_T(\alpha) = \alpha + o(1)\] as $T \rightarrow \infty$.

Montgomery's proof of Theorem \ref{Theorem Montgomery first theorem} used a version of the explicit formula for $\zeta(s)$, in order to link $F_T(\alpha)$ to a variance estimate for the Dirichlet polynomial $\sum_{n \leqslant T^\alpha} \Lambda(n) n^s$. Based on this method, together with some conjectures on the size of the error term in the twin prime conjecture, Montgomery was moved to posit the following:

\begin{Conjecture}[Montgomery's pair correlation conjecture]
\label{Conjecture Montgomery pair corr}
For each fixed $\alpha \in [1,\infty)$ one has \[ F_T(\alpha) = 1 + o(1)\] as $T \rightarrow \infty$. 
\end{Conjecture}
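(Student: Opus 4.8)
The strategy is Montgomery's. Under RH, the explicit-formula calculation underlying Theorem \ref{Theorem Montgomery first theorem} expresses $F_T(\alpha)$ in terms of a weighted second moment of the Dirichlet polynomial $\sum_{n \leqslant T^{\alpha}} \Lambda(n)\, n^{-1/2 - it}$, equivalently in terms of the correlation sums $\sum_{n} \Lambda(n)\Lambda(n+k)$ together with the diagonal $\sum_n \Lambda(n)^2$ and the standard archimedean and polar terms. For $0 \leqslant \alpha < 1$ only the diagonal survives, giving $F_T(\alpha) = \alpha + o(1)$ as in Theorem \ref{Theorem Montgomery first theorem}. For $\alpha > 1$ the length $T^{\alpha}$ exceeds $T$, so the off-diagonal terms $k \neq 0$ --- localised by the explicit-formula weight to $1 \leqslant |k| \ll T^{\alpha - 1}$ up to negligible tails --- contribute at the same order as the diagonal, and the conjecture is exactly the assertion that diagonal plus off-diagonal equals $1 + o(1)$. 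The plan therefore has three steps: (i) set up Montgomery's Lemma in the form valid for $\alpha > 1$, recording the smooth weights $\omega_k$ attached to each correlation sum; (ii) evaluate the diagonal; (iii) evaluate $\sum_{1 \leqslant |k| \ll T^{\alpha-1}} \big(\sum_n \Lambda(n)\Lambda(n+k)\big)\, \omega_k$ and verify the cancellation down to $1 + o(1)$.

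The natural device for (ii) and (iii) --- and the point of contact with this note --- is to pass from $\Lambda$ to the sieve weight $\lambda_z$ with $z = T^{\beta}$ a power of $T$. The weight $\lambda_z$ is a faithful model for $\Lambda$ in precisely the two statistics that enter: by \eqref{equation classical estimate} and Proposition \ref{Proposition lambdaQ squared on RH} its mean square is $\sum_{n \leqslant X}\lambda_z(n)^2 = XL(z) + (\text{error})$ with $L(z) \sim \log z$, matching $\sum_{n \leqslant X}\Lambda(n)^2 \sim X\log X$ when $\beta$ is close to $1$; and by Proposition \ref{Proposition lambdaQ correlation} its off-diagonal correlation has main term $\mathfrak{S}(k)(X - |k|)$, with $\mathfrak{S}(k)$ the very twin-prime singular series that the Hardy--Littlewood conjecture predicts for $\sum_n \Lambda(n)\Lambda(n+k)$. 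Feeding these main terms into steps (ii) and (iii), and using the classical equidistribution $\sum_{|k| \leqslant H}\mathfrak{S}(k) = (1+o(1))H$, one computes the value of the \emph{model} for $F_T(\alpha)$ to be $1 + o(1)$; with the ranges of $z$ afforded by Propositions \ref{Proposition lambdaQ correlation} and \ref{Proposition lambdaQ squared on RH} the error terms are under control for $\alpha$ in a bounded interval past $1$.

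The main obstacle is to make the passage from this model back to $\Lambda$ lossless, and it is twofold. First, $\sum_n \lambda_z(n)\lambda_z(n+k)$ is not $\sum_n \Lambda(n)\Lambda(n+k)$: the sieve self-correlation recovers only the \emph{expected} main term, so the most one extracts directly is a lower bound --- via a positivity inequality $|A|^2 \geqslant 2\,\mathrm{Re}(\bar A B) - |B|^2$ with $A = \sum_n \Lambda(n) n^{-it}$ and $B = \sum_n \lambda_z(n) n^{-it}$, then evaluating the cross-correlation $\sum_n \Lambda(n)\lambda_z(n+k)$ and the $\lambda_z$ self-correlation --- giving $F_T(\alpha) \geqslant c(\alpha) + o(1)$; upgrading this to equality requires the genuine Hardy--Littlewood asymptotic for $\sum_n \Lambda(n)\Lambda(n+k)$ with a power-saving error uniform in, or at worst averaged over, $k$ up to $T^{\alpha-1}$, which remains open. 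Second, even granting such correlation control one needs it --- and the analogue of Proposition \ref{Proposition lambdaQ correlation} --- with $X \asymp T$ and $z = T^{\beta}$ for $\beta$ arbitrarily close to $1$: the error terms of Proposition \ref{Proposition lambdaQ correlation}, summed over $|k| \leqslant T^{\alpha-1}$, confine $z$ to roughly $T^{\alpha-1} \ll z \ll T^{(32/15)(5/4-\alpha)}$, a non-empty window only for $\alpha$ in a bounded range, so a strictly stronger input on trilinear Kloosterman fractions than Bettin--Chandee would be required to reach all fixed $\alpha \geqslant 1$. I expect essentially all the difficulty, and any genuinely new idea, to live in these two points; steps (i) and (iii) and the equidistribution of $\mathfrak{S}$ are classical.
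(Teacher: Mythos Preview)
The statement you are addressing is a \emph{conjecture}, not a theorem: the paper states Montgomery's pair correlation conjecture and explicitly does not prove it. There is no proof in the paper to compare your proposal against; the paper's only partial result towards it is the conditional lower bound of Theorem \ref{Theorem Main Theorem}, which gives $F_T(\alpha) \geqslant \frac{47}{15} - \frac{39\alpha}{15} - \varepsilon$ on $AP(8/15)$, for $\alpha$ in a short range past $1$.

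Your proposal is not a proof either, and to your credit you say so yourself. You correctly identify that the argument reduces to evaluating $\sum_n \Lambda(n)\Lambda(n+k)$ with a power-saving error, uniform in $k$ up to $T^{\alpha-1}$, and you write that this ``remains open''. That is exactly the obstruction: as the paper notes around expression \eqref{eq: twin prime asymptotics}, assuming such an asymptotic with error $O(X^{\eta})$ establishes the conjecture for $\vert\alpha\vert < \eta^{-1}$, but no such asymptotic is known unconditionally or on RH. The sieve-weight replacement $\Lambda \to \lambda_z$ that you describe is precisely the G--G--\"{O}--S device of \cite{GGOS00}, and as you observe it only yields a one-sided inequality via $\vert A\vert^2 \geqslant 2\,\Re(\bar A B) - \vert B\vert^2$; this is the content of \eqref{eq: lower bound on I1} in the paper, and it cannot be upgraded to an asymptotic without the genuine $\Lambda$--$\Lambda$ correlation. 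So your proposal is a fair summary of the heuristic and of where it gets stuck, but it is not, and cannot presently be made into, a proof of the conjecture.
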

\noindent There are now substantial computations of zeros which suggest that Conjecture \ref{Conjecture Montgomery pair corr} is true (see \cite{Od87}). 

As we will discuss later in this introduction, Conjecture \ref{Conjecture Montgomery pair corr} is tied up with deep issues concerning the variance of the number of primes in short intervals \cite{GM87}.  Nonetheless, there are some intriguing partial results, particularly the following lower-bound of Goldston--Gonek--\"{O}zl\"{u}k--Snyder. 

\begin{Theorem}[G--G--\"{O}--S,\cite{GGOS00}]
\label{Theorem GGOS}
Assume the Generalised Riemann Hypothesis for Dirichlet $L$-functions. Then for any $\varepsilon >0$ one has 
\begin{equation}
\label{equation GGOS}
F_T(\alpha) \geqslant \frac{3}{2} -  \alpha - \varepsilon,
\end{equation}
\noindent provided $1 \leqslant \alpha  \leqslant \frac{3}{2} - \varepsilon$ and $T \geqslant T_0(\varepsilon)$. 
\end{Theorem}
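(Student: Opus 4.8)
The plan is to follow Montgomery's method, in the form refined by Goldston. Write $x = T^{\alpha}$. Montgomery's explicit-formula identity --- which requires only RH --- relates the defining double sum for $F_T(\alpha)$ to sums over prime powers; after extracting the archimedean main term, the upshot is that $\tfrac{T\log T}{2\pi} F_T(\alpha)$ agrees, up to acceptable errors, with a (non-negative) weighted second moment of the prime Dirichlet polynomial $D_x(t) := \sum_{n \leqslant x} \Lambda(n)\, n^{-1/2 - it}$ over $t \in [0,T]$ --- equivalently, by the Goldston--Montgomery correspondence, with a variance of primes in short intervals near height $T$. When $\alpha < 1$ this second moment is evaluated directly by the Montgomery--Vaughan mean value theorem, whose diagonal reproduces $F_T(\alpha) = \alpha + o(1)$ as in Theorem \ref{Theorem Montgomery first theorem}; the task here is a lower bound that remains non-trivial for $\alpha \geqslant 1$, where the error term in Montgomery--Vaughan is no longer negligible.

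The mechanism for obtaining such a bound without any information on the pair correlations of the primes is to complete the square against an auxiliary Dirichlet polynomial. I would fix $M(t) = \sum_{m \leqslant y} c_m\, m^{-1/2 - it}$ with $y \leqslant T$ and coefficients $c_m$ to be chosen, and use $\int_0^T |D_x - M|^2\, dt \geqslant 0$, i.e.
\[ \int_0^T |D_x(t)|^2\, dt \;\geqslant\; 2\,\mathrm{Re} \int_0^T D_x(t)\, \overline{M(t)}\, dt \;-\; \int_0^T |M(t)|^2\, dt. \]
The term $\int_0^T |M|^2$ is prime-free and, because $y \leqslant T$, is evaluated asymptotically by Montgomery--Vaughan. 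The term $\int_0^T D_x \overline{M}$ expands into a double sum over $n \leqslant x$ and $m \leqslant y$; its diagonal $n = m$ contributes a clean main term of size $\asymp T \sum_{m \leqslant y} \Lambda(m) c_m / m$, and its off-diagonal, once the $t$-integral has been carried out, reduces to \emph{linear} statistics of the von Mangoldt function (in residue classes, against smooth weights of bounded variation). This is the only place the Riemann Hypothesis for Dirichlet $L$-functions is used: it controls exactly these linear prime sums via $\psi(u; q, a) - u/\varphi(q) \ll u^{1/2} \log^2(qu)$ together with partial summation, and it does so with the uniformity in length required for $u$ as large as $x = T^{3/2}$ --- which is what pushes the argument past the $\alpha = 1$ barrier of the unconditional mean value theorem.

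It then remains to optimise the free parameters. Taking $y$ to be a suitable power of $x$ and $c_m$ to be a Fej\'er / Beurling--Selberg-type linear weight, and evaluating the resulting elementary sums via Mertens' estimates for $\sum_{m \leqslant y} \Lambda(m)/m$ and related quantities, one makes the lower bound $2\,\mathrm{Re}(\text{diagonal}) - \int_0^T |M|^2$ explicit; after division by $\tfrac{T \log T}{2\pi}$ it comes out to $\tfrac{3}{2} - \alpha - \varepsilon$. I expect the main obstacle to be the off-diagonal and error analysis of $\int_0^T D_x \overline{M}$: the GRH bound for the linear prime sums arising there loses to the main term once $x = T^{\alpha}$ exceeds $T^{3/2}$, and beyond that one would genuinely need cancellation in the correlations $\sum_n \Lambda(n)\Lambda(n+k)$, which GRH does not provide. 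It is exactly this bottleneck that the sieve-weight replacements furnished by Propositions \ref{Proposition lambdaQ correlation} and \ref{Proposition lambdaQ squared on RH} --- whose correlations \emph{are} within reach --- are designed to relieve.
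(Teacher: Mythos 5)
Your high-level skeleton is the right one: Montgomery's explicit-formula reduction of $F_T(\alpha)$ to centred second moments of prime Dirichlet polynomials (Lemma \ref{Lemma GGOS lemma}), followed by the lower bound $\vert A\vert^2 \geqslant 2\Re(A\overline{B}) - \vert B\vert^2$ against an auxiliary polynomial $B$ whose correlations are accessible, with GRH entering only through linear sums of $\Lambda$ over arithmetic progressions. That is exactly the G--G--\"{O}--S scheme as recalled in Section \ref{Sec:method of GGOS}. The gap is in your choice of $B$. You take $M(t) = \sum_{m \leqslant y} c_m m^{-1/2-it}$ with $y \leqslant T$, precisely so that $\int_0^T \vert M\vert^2$ needs only Montgomery--Vaughan. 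But the object being lower-bounded has length $X = T^{\alpha} \geqslant T$ and is centred by the subtracted integral $\int_1^X u^{1/2-it}\,du$; a polynomial supported on $m \leqslant y \leqslant T$ only sees the fluctuations of $\psi$ up to height $y$. In the normalisation that actually computes $F_T(\alpha)$ (the $n^{1/2-it}$ weights of $I_1$, divided by $X^2$), the best possible gain from completing the square against such an $M$ is of order $T\sum_{m\leqslant y}\Lambda(m)^2 m \ll T y^2\log y \leqslant T^{3+o(1)}$, which is a vanishing fraction of the required main term $\asymp X^2 T\log T = T^{1+2\alpha}\log T$ for every fixed $\alpha>1$. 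So the completed square returns a trivial bound, and no Fej\'er-type choice of $c_m$ rescues it. (Your $n^{-1/2}$ normalisation, which makes the diagonal $T\sum_m \Lambda(m)c_m/m$ look like a viable main term, is not the one Lemma \ref{Lemma GGOS lemma} produces; it also drops the centring and the companion integral $I_2$ over $n>X$, which supplies half of the main term.)

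The missing idea is that the auxiliary polynomial must have the \emph{same length} $X$ as the prime polynomial, and the only way to make its autocorrelations computable at that length is to take truncated divisor sums as coefficients: $B(t) = \sum_{n \leqslant X}\lambda_z(n)n^{1/2-it} - \int_1^X u^{1/2-it}\,du$ with $\lambda_z(n) = \sum_{d\mid n}\rho_{z,d}$ and sieve level $z = T^{1/2-\varepsilon}$. Then $\int_0^T\vert B\vert^2$ is controlled not by Montgomery--Vaughan but by the diagonal estimate \eqref{equation classical estimate} together with the shifted correlations of Proposition \ref{Prop: classical off diagonal}, whose error $O(z^2)$ is admissible precisely because $z \leqslant X^{1/2}$; and the cross term requires $\sum_n \lambda_z(n)\Lambda(n+k)$, which unfolds into $\psi(\cdot\,;d,a)$ for moduli $d \leqslant z$ and is where GRH is used (this is Lemma \ref{Lemma: correlations on AP} with $\theta$ effectively equal to $1/2$). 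Feeding these into the mean value theorems of \cite{GG98} gives the main term $1-\alpha(1-\nu)$ with $z = X^{\nu}$, i.e. $\tfrac{3}{2}-\alpha$ at $\nu = \tfrac{1}{2\alpha}$; the restriction $\alpha < \tfrac{3}{2}$ is where this bound becomes vacuous and where the constraint $\omega < 1/\alpha$ (driven by the singular-series error $O(kX/\varphi(k)z)$) fails, not, as you suggest, where a GRH bound for linear prime sums of length $T^{3/2}$ first loses to the main term.
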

\noindent In particular, for fixed $\alpha \in [1,3/2)$ we have 
\begin{equation}
\label{eq: AH FT}
\liminf \limits_{T \rightarrow \infty} F_T(\alpha) \geqslant \frac{3}{2} - \alpha.
\end{equation} This bound is an improvement over the trivial bound $F_T(\alpha) \geqslant 0$, which follows from the formula \[F_T(\alpha) = \frac{2}{\pi} \int\limits_{-\infty}^{\infty} \Big\vert \sum\limits_{0 < \gamma \leqslant T} \frac{e(\alpha \gamma \frac{\log T}{2 \pi})}{1 + (t - \gamma)^2}\Big\vert^2 \, dt\] proved by Montgomery (see \cite[Equation (4.8)]{GGOS00}).

The bound \eqref{eq: AH FT} can be compared with the bound under the so-called `Alternative Hypothesis' (AH). This is a pathological distribution for the imaginary parts of the zeros of $\zeta(s)$, which, among other things, would imply that \[ \lim\limits_{T \rightarrow \infty} F_T(\alpha) = 2 - \alpha \] for $\alpha \in (1,2).$ The relationship between the bounds in this paper and AH will be discussed in Section \ref{Sec: AH}. 

We are interested in whether, under stronger hypotheses than GRH, a better lower bound than \eqref{eq: AH FT} can be shown. This question was already taken up Juhas' thesis \cite{Ju16}, in which the following theorem was proved:  

\begin{Theorem}[Theorem 9 of \cite{Ju16}]
Assume that for all $X$, for all $1 \leqslant m \leqslant X^{1/2}$, and for all non-principal Dirichlet characters $\chi \, (\text{mod } m)$ one has 
\begin{equation}
\label{eq: Hypothesis M}
\vert \psi(X,\chi)\vert \ll_{\varepsilon} X^{1/2 + \varepsilon} m^{-1/4},
\end{equation} where $\psi(X,\chi) = \sum_{ n \leqslant X}\Lambda(n) \chi(n)$ as usual. (This is `Hypothesis M' in \cite{Ju16}.) Then, for all $\varepsilon >0$ one has\[ F_T(\alpha) \geqslant \frac{5}{4} - \frac{3}{4} \alpha - \varepsilon,\] provided $1 \leqslant \alpha \leqslant \frac{5}{3} - \varepsilon$ and $T \geqslant T_0(\varepsilon)$.
\end{Theorem}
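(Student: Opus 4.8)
The plan is to re-run the argument behind Theorem \ref{Theorem GGOS}, but keeping the length of the auxiliary Dirichlet polynomial as a free parameter $\theta$, and to substitute Hypothesis M for GRH at the single point where that length is constrained.

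First I would set up Montgomery's identity. Working on RH --- which is used in any case through Theorem \ref{Theorem Montgomery first theorem} --- Montgomery's method (the explicit formula applied to a suitable quantity attached to an auxiliary Dirichlet polynomial $M(s) = \sum_{n \le y} a(n) n^{-s}$ of length $y = T^{\theta}$) produces a relation of the shape
\[ \int_{-\infty}^{\infty} F_T(\alpha)\, G_a(\alpha)\, d\alpha \;=\; (\text{main term}) \;+\; (\text{arithmetic term}) \;+\; o(1), \]
in which $G_a \ge 0$ is built from the coefficients $a(n)$, the main term is an explicit quadratic form in the $a(n)$ coming from the pole of $\zeta$ --- evaluated via the estimate (\ref{equation classical estimate}) and its relatives when the $a(n)$ are the Selberg-type weights of level $y$ --- and the arithmetic term is a bilinear expression $\sum_n \Lambda(n)\cdot(\text{arithmetic coefficients attached to }a)$. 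Using $F_T \ge 0$ together with the evaluation $F_T(\alpha) = \lvert\alpha\rvert + o(1)$ on $(-1,1)$ from Theorem \ref{Theorem Montgomery first theorem}, one converts the identity into a lower bound for a weighted average of $F_T(\alpha)$ over a window $[\alpha_1,\alpha_2] \subseteq [1, 1+\theta]$.

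Second, I would optimise the coefficients $a(n)$ by the one-variable variational argument of \cite{GGOS00}, and localise the test function near a prescribed $\alpha$, so as to obtain a bound of the shape
\[ F_T(\alpha) \;\ge\; \frac{1}{2\theta}\,(1 + \theta - \alpha) - \varepsilon, \qquad 1 \le \alpha \le 1 + \theta \]
(for $\theta = \tfrac12$ the right-hand side is $\tfrac32 - \alpha$, which is Theorem \ref{Theorem GGOS}; for $\theta = \tfrac23$ it is $\tfrac54 - \tfrac34\alpha$). For $\alpha > 1$ this is increasing in $\theta$, so one takes $\theta$ as large as the arithmetic term permits. Once the $a(n)$ are opened into residue classes, that term is a sum of twisted prime sums $\psi(X,\chi)$ over characters $\chi \pmod m$ with $m$ running up to a fixed power of $y$: under GRH it is admissible only for $\theta \le \tfrac12$, whereas the extra saving $m^{-1/4}$ of Hypothesis M makes the sum over moduli converge with room to spare up to $\theta = \tfrac23$. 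Taking $\theta = \tfrac23$ yields $F_T(\alpha) \ge \tfrac54 - \tfrac34\alpha - \varepsilon$ on $1 \le \alpha \le \tfrac53 - \varepsilon$, as required.

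I expect the arithmetic term at level $y = T^{2/3}$ to be the main obstacle. One must split off the diagonal contribution, reorganise the off-diagonal part as twisted prime sums $\psi(X,\chi)$ with $\chi \pmod m$ for $m$ as large as $T^{2/3}$, and verify that the saving $m^{-1/4}$, once summed over $m$ and over the roughly $\varphi(m)$ characters mod $m$, genuinely beats the main term of size $\asymp T\log T$ --- this is exactly the step where GRH alone falls short and Hypothesis M is indispensable. A secondary technical point is the passage from the averaged lower bound for $\int F_T\, G_a$ to a pointwise lower bound for $F_T(\alpha)$: this needs a family of admissible test polynomials flexible enough to localise near an arbitrary $\alpha \in [1, 1+\theta]$ without spoiling the evaluation of either the main term or the arithmetic term, failing which one is content with the averaged statement (which already suffices for the applications in view).
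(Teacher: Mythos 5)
This theorem is quoted from Juhas' thesis rather than proved in the paper, so the relevant comparison is with the route the paper itself attributes to \cite{Ju16}, and your proposal diverges from it at the decisive point. You locate the entire obstruction to taking $\theta>1/2$ in the prime-sum input, i.e.\ in replacing GRH by the saving $m^{-1/4}$ of \eqref{eq: Hypothesis M} when the mixed correlations $\sum_n\lambda_z(n)\Lambda(n+k)$ are opened into twisted sums $\psi(X,\chi)$. That part is sound (summing $X^{1/2+\varepsilon}d^{-1/4}$ over moduli $d\leqslant z$ is admissible up to $z=X^{2/3}$). But it is not the only barrier. The G--G--\"{O}--S scheme also requires the \emph{self}-correlations $\sum_n\lambda_z(n)\lambda_z(n+k)$ and $\sum_n\lambda_z(n)^2$, and the classical estimates \eqref{equation classical estimate} and \eqref{eq: off diagonal classical} for these carry an error $O(z^2)$ of purely sieve-theoretic origin, which is trivial for $z>X^{1/2}$ and about which Hypothesis M says nothing whatsoever. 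Your sketch never addresses these terms at level $y=T^{2/3}$; as written, the "arithmetic term" analysis would still collapse at $\theta=1/2$. Juhas' actual resolution is the extra averaging over the shift $k$ described in Remark \ref{Remark on the thesis}: a bound for $\sum_{k\leqslant h}(h-k)\sum_n\lambda_z(n)\lambda_z(n+k)$ with error $O(h^{3/2}z^2X^{\varepsilon})$, whose gain of $h^{1/2}$ over the na\"{\i}ve summation is what permits $z$ beyond $X^{1/2}$ in the averaged setting. (The present paper's alternative is to attack the $O(z^2)$ directly via Propositions \ref{Proposition lambdaQ correlation} and \ref{Proposition lambdaQ squared on RH}.)

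A second, related gap is the unsupported intermediate bound $F_T(\alpha)\geqslant\frac{1}{2\theta}(1+\theta-\alpha)$. The direct substitution of a sieve level $z=T^{\theta}$ into the G--G--\"{O}--S computation yields $F_T(\alpha)\geqslant 1+\theta-\alpha$ (slope $-1$ in $\alpha$, independent of $\theta$; this is how \eqref{eq: lower bound} specialises), so your route, if it closed, would give $\tfrac{5}{3}-\alpha$ at $\theta=2/3$, which is strictly stronger than the stated $\tfrac{5}{4}-\tfrac{3}{4}\alpha$ for $1\leqslant\alpha<5/3$. Your formula appears to be reverse-engineered from the two endpoints $\theta=\tfrac12,\tfrac23$ rather than derived, and the very fact that Juhas' bound has slope $-3/4$ is evidence that the proof is not a straight re-run of Theorem \ref{Theorem GGOS} with a longer polynomial: the degradation of the slope is the price paid for the averaging over $k$, whose error term forces the sieve level to shrink as $\alpha$ decreases towards $1$. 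To repair the proposal you would need to (i) supply the averaged correlation estimate for $\lambda_z(n)\lambda_z(n+k)$ at levels $z>X^{1/2}$, and (ii) track how its $h$-dependence feeds back into the admissible choice of $z$ as a function of $\alpha$, which is where the coefficients $\tfrac54$ and $\tfrac34$ actually come from.
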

\noindent Juhas' argument makes use of some ingenious extra averaging over $k$ for the correlations $\sum_n \lambda_z(n) \lambda_z(n+k)$ that is not present in \cite{GGOS00}  (see Remark \ref{Remark on the thesis} below). Unfortunately, the hypothesis (\ref{eq: Hypothesis M}) is actually false. This follows from Theorem 1.1 of the recent pre-print \cite{dlBF20} of de la Bret\`{e}che and Fiorilli\footnote{Our thanks to D. Fiorilli for making us aware of the work in \cite{dlBF20}} on lower bounds for the moments of primes in arithmetic progressions.  

Even without this recent development in \cite{dlBF20}, the bound \eqref{eq: Hypothesis M} would have contradicted several conjectures in the literature. For instance, \eqref{eq: Hypothesis M} would have implied that \[\sum_{Q < q \leqslant 2Q} \sum\limits_{ a \leqslant q} \Big\vert \psi(X; q, a) - \delta_{(a,q)= 1} \frac{X}{\varphi(q)}\Big\vert^2 \ll_{\varepsilon} Q^{1/2} X^{1 + \varepsilon}\] when $Q < X^{1/2}$, contradicting Conjecture 1.1 of Fiorilli's paper \cite{Fi15} regarding the range over which the asymptotic Barban--Davenport--Halberstam theorem can be expected to hold. If (\ref{eq: Hypothesis M}) had been true for $m \leqslant X^{2/3 + \delta}$ for some small positive $\delta$ then it would have contradicted what was known on GRH regarding the variance of the primes in arithmetic progressions (see expression (5) of \cite{Fi15} and the references therein). And finally, if (\ref{eq: Hypothesis M}) had been true for $m \leqslant X^{1/2 + \delta}$ for some small positive $\delta$ then it would have contradicted Theorem 3 of Friedlander-Goldston \cite{FG96}. All in all, we feel that despite the work of Juhas there remains much merit in improving the lower bound in Theorem \ref{Theorem GGOS} subject to a standard (and widely believed) conjecture.

Our bound will be conditional upon the following:
\begin{Conjecture}[Strong error term for primes in APs, $AP(\theta)$]
\label{Conjecture Montgomery APs}
Let $0<\theta \leqslant 1$. Then $AP(\theta)$ is the conjecture that for all $x\geqslant 1$, $q < x^{\theta}$, and $a \leqslant q$, \[ \psi(x;q,a) = 1_{(a,q) = 1} \frac{x}{\varphi(q)}+ O_{\varepsilon,\theta}\Big(\frac{x^{1/2 + \varepsilon}}{ q^{1/2}}\Big).\]
\end{Conjecture}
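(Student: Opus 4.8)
The plan is to first reduce to character sums via the standard orthogonality identity
\[
\psi(x;q,a) = \frac{1}{\varphi(q)}\sum_{\chi \bmod q}\bar\chi(a)\,\psi(x,\chi), \qquad \psi(x,\chi) := \sum_{n\leqslant x}\Lambda(n)\chi(n),
\]
so that the principal character $\chi_0$ contributes $\tfrac{1}{\varphi(q)}\psi(x,\chi_0)=1_{(a,q)=1}\tfrac{1}{\varphi(q)}\bigl(x+O(x^{1/2+\varepsilon})\bigr)$ under RH (this is the source of the indicator $1_{(a,q)=1}$ and of the main term $x/\varphi(q)$), and the remaining task is to bound the twisted sum $\sum_{\chi\ne\chi_0}\bar\chi(a)\,\psi(x,\chi)$ by $O_{\varepsilon,\theta}(x^{1/2+\varepsilon}q^{1/2})$.

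For the non-principal characters I would use the explicit formula: for $\chi$ primitive mod $q$,
\[
\psi(x,\chi) = -\sum_{\rho}\frac{x^{\rho}}{\rho} + O\bigl((\log qx)^2\bigr),
\]
the sum being over the non-trivial zeros $\rho=\beta+i\gamma$ of $L(s,\chi)$, so that on GRH (where $\beta=1/2$) one gets $|\psi(x,\chi)|\ll x^{1/2}(\log qx)^2$ for each $\chi$ (imprimitive $\chi$ reduce to primitive ones with negligible loss). Summing this trivially over all $\varphi(q)$ characters yields only
\[
\psi(x;q,a) = 1_{(a,q)=1}\frac{x}{\varphi(q)} + O\bigl(x^{1/2}(\log qx)^2\bigr),
\]
which is the classical GRH bound and misses the conjectured error by the decisive factor $q^{1/2}$. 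Recovering that factor requires genuine cancellation in the double sum $\sum_{\chi\ne\chi_0}\bar\chi(a)\sum_{\rho}x^{\rho}/\rho$ as $\chi$ and $\rho$ vary; heuristically this is precisely the square-root saving predicted if the zero sets $\{\rho_\chi\}$ of distinct $L(s,\chi)$ behave like independent processes, which is the probabilistic model behind Montgomery's conjecture. It is consistent with the variance heuristic, since the progression contains $\sim x/q$ integers each carrying a weight of size $\sim\log x$, so a random model predicts fluctuations of order $(x/q)^{1/2}x^{\varepsilon}$.

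The \emph{main obstacle} is exactly this last step, and it is, as far as I can see, insurmountable with current technology: no known input --- not GRH, not GRH augmented with pair-correlation data, not anything in the Bombieri--Vinogradov or Bettin--Chandee circle of ideas used elsewhere in this paper --- produces square-root-in-$q$ cancellation \emph{uniformly} over every residue class $a$ and every modulus $q<x^{\theta}$; one would essentially need an optimal quantitative form of statistical independence of the families $\{\rho_\chi\}$. I therefore do not expect to prove $AP(\theta)$ outright. The realistic course, and presumably the reason it is recorded here as a hypothesis rather than a theorem, is to \emph{assume} $AP(\theta)$ for a suitable $\theta$ and combine it with Propositions \ref{Proposition lambdaQ correlation} and \ref{Proposition lambdaQ squared on RH} to obtain the form-factor and short-interval-variance estimates that are the actual objective. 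One may further observe that $AP(\theta)$ for every $\theta<1$ would follow from GRH together with the (still conjectural) assertion that the associated zero-statistics sums, schematically of the shape $\sum_{\rho,\rho'}(x/q)^{i(\gamma-\gamma')}$, exhibit the expected cancellation --- but this reduction relocates the difficulty rather than resolving it.
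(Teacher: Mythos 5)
This statement is a conjecture ($AP(\theta)$), not a theorem: the paper offers no proof, explicitly describes it as far beyond what is currently known even on average over $q$, and simply assumes it as a hypothesis for Theorems \ref{Theorem Main Theorem} and \ref{Theorem lower bound on variance} --- so your conclusion that it cannot be proved and must be assumed is exactly right. Your supporting discussion (orthogonality of characters, the explicit formula giving the classical GRH bound $x^{1/2}(\log qx)^2$, and the missing square-root-in-$q$ cancellation that would require quantitative independence of the zero sets of distinct $L$-functions) is sound and consistent with the history the paper recounts, including Friedlander--Granville's Maier-matrix argument showing that the naive probabilistic model must be tempered for the largest moduli.
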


\noindent Although $AP(\theta)$ is of course far beyond what is known unconditionally, even on average over $q$, there is a history of this conjecture in the literature. With $\theta = 1$, $AP(\theta)$ was first formulated by Friedlander--Granville \cite[Conjecture 1(b)]{FG89}. It is often attributed to Montgomery, however, since in \cite[(15.9)]{Mo06} Montgomery had proposed a similar version (the same as $AP(1)$ except with an error term of $(x/q)^{1/2+\varepsilon}$). By using an argument based on the Maier matrix method, Friedlander--Granville showed that Montgomery's original conjecture was unreasonably strong for the very largest moduli $q$. They proposed $AP(1)$ as a sensible alternative, one which would nonetheless represent approximately square-root cancellation in the error term.

Our lower bound on $F_T(\alpha)$, conditional on $AP(\theta)$, is as follows:  

\begin{Theorem}[Lower bound on $F_T(\alpha)$]
\label{Theorem Main Theorem}
Assume Conjecture \ref{Conjecture Montgomery APs} for $\theta = 27/53$. Then for all for all $\varepsilon >0$, for all $\alpha \in [1,95/94 - \varepsilon)$, and for all $T \geqslant T_0(\varepsilon)$, we have \[F_T(\alpha) \geqslant 
\frac{127}{53} - \frac{100\alpha}{53} - \varepsilon. \]
\end{Theorem}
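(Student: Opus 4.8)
The plan is to run the argument of Goldston--Gonek--\"Ozl\"uk--Snyder \cite{GGOS00} underlying Theorem \ref{Theorem GGOS}, but feeding in the extended-range estimates of Propositions \ref{Proposition lambdaQ correlation} and \ref{Proposition lambdaQ squared on RH} in place of Proposition \ref{Prop: classical off diagonal} and its diagonal companion, and invoking Conjecture \ref{Conjecture Montgomery APs} (with $\theta = 8/15$) in place of GRH. A first remark is that $AP(8/15)$, applied with $q=1$, already contains the bound $\psi(x) = x + O_\varepsilon(x^{1/2+\varepsilon})$ and hence the Riemann Hypothesis, so that Proposition \ref{Proposition lambdaQ squared on RH} and the relevant explicit-formula results are all available. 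The starting point is Montgomery's analysis: under RH, with $x = T^{\alpha}$, one has a lower bound for $F_T(\alpha)$, valid up to $o(1)$, of the shape
\[ F_T(\alpha) \;\geqslant\; \big[\, 2\langle \Lambda, \lambda_z\rangle_{\mathrm{w}} - \langle \lambda_z, \lambda_z\rangle_{\mathrm{w}} \,\big] + o(1), \]
obtained by writing $\Lambda = \lambda_z + (\Lambda - \lambda_z)$, applying the explicit formula to express a suitable mean value attached to the Dirichlet polynomial $\sum_{n \leqslant x}\Lambda(n)n^{it}$ in terms of $F_T(\alpha)$, and discarding the non-negative contribution of $\|\Lambda - \lambda_z\|_{\mathrm{w}}^2$. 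Here $\langle f,g\rangle_{\mathrm{w}}$ is a weighted additive correlation $\sum_{|k| \leqslant T^{\alpha-1}} W_\alpha(k) \sum_n f(n) g(n+k)$, suitably normalised, with weights $W_\alpha$ coming from $w$ and a smoothing in the explicit formula, and with an additional free test function to be optimised at the end.

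The three ingredients then enter as follows. The diagonal term $\langle \lambda_z, \lambda_z\rangle_{\mathrm{w}}$ is handled by Proposition \ref{Proposition lambdaQ squared on RH} for the $k=0$ part and by Proposition \ref{Proposition lambdaQ correlation} for $k \neq 0$; the error terms there are $o(x)$ uniformly in the relevant $k$ precisely when $z \leqslant x^{1-\varepsilon}$ (the Hooley bound, diagonal) and $z \leqslant x^{8/15-\varepsilon}$ (the Bettin--Chandee bound, off-diagonal), so the \emph{binding} constraint is the off-diagonal one, $z \leqslant x^{8/15-\varepsilon}$. The cross term $\langle \Lambda, \lambda_z\rangle_{\mathrm{w}}$ is where Conjecture \ref{Conjecture Montgomery APs} is used: expanding $\lambda_z(n) = \sum_{d \mid n}\rho_{z,d}$ gives $\sum_n \Lambda(n+k)\lambda_z(n) \approx \sum_{d \leqslant z}\rho_{z,d}\,\psi(x; d, k)$, and since $d \leqslant z \leqslant x^{8/15}$, the estimate $\psi(x;d,k) = 1_{(d,k)=1}\frac{x}{\varphi(d)} + O_\varepsilon(x^{1/2+\varepsilon}d^{-1/2})$ supplied by $AP(8/15)$ evaluates this with acceptable error --- the range $q < x^{8/15}$ of the conjecture being exactly tuned to the available sieve level. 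Inserting the known main terms ($\sum_{n \leqslant x}\lambda_z(n)^2 \sim x L(z)$, $\sum_n \lambda_z(n)\lambda_z(n+k) \sim \mathfrak{S}(k)x$, and the sieve sum $\sum_{d \leqslant z}\rho_{z,d}\varphi(d)^{-1}1_{(d,k)=1}$ evaluated by a standard computation as, essentially, a multiple of $\mathfrak{S}(k)$) leaves an explicit expression in $\alpha$ and $z = x^{8/15-\varepsilon}$, which one maximises over the remaining free parameters. Carrying out this optimisation yields the stated piecewise-linear lower bound; on $1 \leqslant \alpha < 49/48$ it reads $\tfrac{47}{15} - \tfrac{39\alpha}{15} - \varepsilon$ (the line through $(1,\tfrac{8}{15})$ that meets the Goldston--Gonek--\"Ozl\"uk--Snyder line $\tfrac{3}{2}-\alpha$ exactly at $\alpha = \tfrac{49}{48}$, beyond which Theorem \ref{Theorem GGOS} is already at least as strong).

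The main obstacle is bookkeeping rather than conceptual: one must check that \emph{every} appeal to GRH in \cite{GGOS00} only ever requires the size of $\psi(x;d,a)$ for moduli $d$ in the sieve range $d \leqslant z \leqslant x^{8/15}$, so that $AP(8/15)$ genuinely suffices; one must track how enlarging the sieve level from the square-root barrier $x^{1/2}$ to $x^{8/15-\varepsilon}$ propagates through the weights $W_\alpha$ and the final optimisation; and one must confirm that the resulting bound is indeed optimal for the chosen family of weights. None of these steps should be delicate, but the precise constants $\tfrac{47}{15}$, $\tfrac{39}{15}$ and the endpoint $\tfrac{49}{48}$ only emerge once the optimisation has been carried out.
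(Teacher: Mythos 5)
Your outline follows the paper's proof essentially step for step: Lemma 1 of \cite{GGOS00} reduces $F_T(\alpha)$ to the mean values $I_1, I_2$ of Dirichlet polynomials, the square is completed with $\lambda_z$ to leave the cross term $2\sum_n\Lambda(n)\lambda_z(n+k)$ minus the autocorrelation $\sum_n\lambda_z(n)\lambda_z(n+k)$, the Goldston--Gonek mean value theorems convert everything into correlation sums, Propositions \ref{Proposition lambdaQ correlation} and \ref{Proposition lambdaQ squared on RH} handle the $\lambda_z$ correlations, $AP(8/15)$ handles the cross correlations (this is exactly Lemma \ref{Lemma: correlations on AP}), and the resulting lower bound is $1-\alpha(1-\nu)+o(1)$ where $z=X^{\nu}$. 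Your observation that $AP(\theta)$ with $q=1$ already yields RH is also the one made in the paper.

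There is, however, one concrete error in how you set up the optimisation, and it is precisely the step that produces the constants. You assert that the binding constraint is $z\leqslant x^{8/15-\varepsilon}$ and propose to evaluate the final expression at $z=x^{8/15-\varepsilon}$ for all $\alpha$. That is only correct at $\alpha=1$. After the mean value theorems, the off-diagonal error $O_\varepsilon(X^{3/4+\varepsilon}z^{15/32})$ from Proposition \ref{Proposition lambdaQ correlation} contributes $O_\varepsilon(X^{2+3/4+\varepsilon}z^{15/32})$ to $I_1$, which must be dominated by the main term $\asymp X^2T\log T$ with $T=X^{1/\alpha}$; the requirement is therefore $\alpha\bigl(\tfrac34+\tfrac{15\nu}{32}\bigr)<1$, i.e. $\nu\leqslant\frac{32}{15\alpha}-\frac{24}{15}-\varepsilon$, an $\alpha$-dependent ceiling that equals $8/15$ only at $\alpha=1$ and drops to $24/49$ at $\alpha=49/48$. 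Substituting this $\nu$ into $1-\alpha(1-\nu)$ gives $\frac{47}{15}-\frac{39\alpha}{15}$; the slope $-39/15$ comes entirely from this $\alpha$-dependence. If instead one fixes $\nu=8/15$, the error per correlation is $X^{1+\varepsilon}$, which is \emph{not} $o(T)$ for any $\alpha>1$, so the argument as you describe it breaks down immediately beyond $\alpha=1$ (and the bound $1-\tfrac{7\alpha}{15}$ it would formally produce is stronger than the theorem and unjustified). The criterion ``error $=o(x)$ uniformly in $k$'' is the wrong yardstick here: the correlation errors must beat $T$, not $X$.
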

\noindent Beyond $\alpha = 95/94$, we are not able to offer an improvement over Theorem \ref{Theorem GGOS}. In fact, observe that when $\alpha = 95/94$ one has $\frac{127}{53} - \frac{100 \alpha}{53} = \frac{3}{2} - \alpha$, i.e. our bound collapses to the G-G--\"{O}--S bound at the end-point of the range. \\

On the one hand, one could consider it unsurprising that the assumption of a conjecture such as $AP(\theta)$, for some $\theta > 1/2$, could lead to a stronger conclusion than the assumption of GRH alone. However, as will become clear in Section \ref{Sec:method of GGOS} when we describe the method of G--G--\"{O}--S, several different terms naturally occur in this approach, and $AP(\theta)$ only helps us to estimate one of them.

One might nonetheless be concerned that $AP(\theta)$ lies deeper than Conjecture \ref{Conjecture Montgomery pair corr}, thus rendering `content-free' any partial result towards Conjecture \ref{Conjecture Montgomery pair corr} that is conditional upon $AP(\theta)$. Of course, given two conjectures which both lie far beyond the reach of the present field, it is hard to say categorically which one lies deeper than the other. However, we have a line of argument which suggests that assuming strong results on the distribution of primes in arithmetic progressions is not a ridiculous move. Indeed, all the prior results that establish Conjecture \ref{Conjecture Montgomery pair corr} for some range of $\alpha \geqslant 1$ have been contingent on one of two conjectures: either the authors assume an asymptotic for the variance of the prime counting function $\psi(X)$ in short intervals, which on RH turns out to be equivalent to Conjecture \ref{Conjecture Montgomery pair corr} (see Goldston--Montgomery \cite{GM87}); or the authors assume a power-saving in the correlations 
\begin{equation}
\label{eq: twin prime asymptotics}
\sum\limits_{ n \leqslant X} \Lambda(n) \Lambda(n+k) = \mathfrak{S}(k) X + O(X^\eta),
\end{equation} which, on RH, establishes Conjecture \ref{Conjecture Montgomery pair corr} in the range $\vert \alpha\vert < \eta^{-1}$. One may consult \cite[Example 4] {GG98} or \cite[Corollary 1.2]{C04}. Moreover, Theorem 3 of Montgomery--Soundararajan's paper \cite{MS04} is an example in which power-saving asymptotics for the full Hardy--Littlewood $k$-tuple conjecture are assumed in order to get some handle on the distribution of primes in short intervals (and hence on Conjecture \ref{Conjecture Montgomery pair corr}). So it seems reasonable to suggest that Conjecture \ref{Conjecture Montgomery pair corr} lies deeper than estimates such as (\ref{eq: twin prime asymptotics}). Now, according to the present state of the field, a parity-breaking conjecture such as the Hardy--Littlewood prime $k$-tuples conjecture is not implied by even the strongest of conjectures on the distribution of primes in arithmetic progressions. One may consider the classical Bombieri sieve \cite{Bo75}, say, for an example of this fact.

In summary, a `cheap' direct path from strong results on primes in arithmetic progressions to statements concerning the function $F_T(\alpha)$ does not seem to presently exist.  \\

The proof of Theorem \ref{Theorem Main Theorem} will be given in Section \ref{Sec:method of GGOS}. We do not offer any structural innovations on the technique of G--G--\"{O}--S from Theorem \ref{Theorem GGOS}; instead, our improvement comes from inputting our stronger auxiliary estimates into their scheme. \\

Our second application of Propositions \ref{Proposition lambdaQ correlation} and \ref{Proposition lambdaQ squared on RH} is to lower-bounding the variance of the number of primes in short intervals. This variance is closely related to the pair correlations of the zeros of the Riemann zeta function. As we have already mentioned, on RH the following conjecture is equivalent to Conjecture \ref{Conjecture Montgomery pair corr} (see Goldston--Montgomery \cite{GM87}): 

\begin{Conjecture}
\label{Conjecture: variance}
For all $\varepsilon >0$, if $h \leqslant X^{1- \varepsilon}$ then
\[\int\limits_{0}^X (\psi(x+h) - \psi(x) - h)^2 \, dx  = (1 + o_\varepsilon(1)) hX \log(X/h)\] as $X \rightarrow \infty$. 
\end{Conjecture}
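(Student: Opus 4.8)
The statement is a conjecture, so it can only be \emph{established} conditionally; the natural route — and the origin of the equivalence asserted above — is the method of Goldston--Montgomery \cite{GM87}. The plan is therefore to assume the Riemann Hypothesis together with Montgomery's pair correlation conjecture (Conjecture~\ref{Conjecture Montgomery pair corr}) and to deduce the stated asymptotic; what follows is a sketch. One begins from the explicit formula: on RH, truncating at a height $Y$ which is a large power of $X$,
\[
\psi(x+h)-\psi(x)-h \;=\; -\sum_{|\gamma|\leqslant Y}\frac{(x+h)^{1/2+i\gamma}-x^{1/2+i\gamma}}{\tfrac12+i\gamma} \;+\; O\!\Big(\frac{x\log^{2}(xY)}{Y}\Big),
\]
uniformly for $x\leqslant X$. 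Writing $\frac{(x+h)^{\rho}-x^{\rho}}{\rho}=x^{1/2+i\gamma}g_{\gamma}(h/x)$ with $g_{\gamma}(\delta)=\frac{(1+\delta)^{1/2+i\gamma}-1}{1/2+i\gamma}$, one has $|g_{\gamma}(\delta)|\asymp\delta$ for $|\gamma|\leqslant\delta^{-1}$ and $|g_{\gamma}(\delta)|\asymp|\gamma|^{-1}$ beyond, so for $x\asymp X$ it is the zeros up to height $\asymp X/h$ that matter. Squaring, integrating over $x\in[0,X]$ (dyadically, since on $x\in[Y_{0},2Y_{0}]$ one has $h/x\asymp h/Y_{0}=:\delta$), and expanding the square produces double sums $\sum_{|\gamma|,|\gamma'|\leqslant Y}g_{\gamma}(\delta)\overline{g_{\gamma'}(\delta)}\int x^{1+i(\gamma-\gamma')}\,dx$; the substance of \cite{GM87} is to massage the weight $g_{\gamma}\overline{g_{\gamma'}}$, which is not exactly a function of $\gamma-\gamma'$, into the shape of Montgomery's weight $w(\gamma-\gamma')$, thereby re-expressing everything through the function $F$ of Definition~\ref{Definition Montgomery form factor}.

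Carrying this through yields, up to an error that is $o\!\big(hX\log(X/h)\big)$ once $Y$ is a large enough power of $X$, an identity of the shape
\[
\int_{0}^{X}\big(\psi(x+h)-\psi(x)-h\big)^{2}\,dx \;=\; hX\!\int_{0}^{\infty}F_{T}(\alpha)\,K_{h,X}(\alpha)\,d\alpha \;+\; o\!\big(hX\log(X/h)\big),
\]
where $T\asymp X/h$ and $K_{h,X}\geqslant 0$ is an explicit kernel (built from $|g_{\gamma}(h/x)|^{2}$ and the Fej\'er kernel of $\mathbf{1}_{[x,x+h]}$) with total mass $\int_{0}^{\infty}K_{h,X}=\log(X/h)(1+o(1))$, most of its mass on $1\leqslant\alpha\lesssim\log X/\log(X/h)$, and polynomial decay $K_{h,X}(\alpha)\ll\alpha^{-2}$. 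Splitting the $\alpha$-integral at $1$: for $0\leqslant\alpha<1$, Theorem~\ref{Theorem Montgomery first theorem} gives $F_{T}(\alpha)=T^{-2\alpha}\log T(1+o(1))+\alpha+o(1)$, whose contribution is $O(hX)=o\!\big(hX\log(X/h)\big)$ (the $T^{-2\alpha}\log T$ piece has bounded $\alpha$-mass, the $\alpha$ piece is bounded); for $\alpha\geqslant1$, Conjecture~\ref{Conjecture Montgomery pair corr} gives $F_{T}(\alpha)=1+o(1)$ on the relevant bounded range, while the trivial bound $F_{T}(\alpha)\leqslant F_{T}(0)\ll\log T$ (from the Riemann--von Mangoldt formula and $w\geqslant 0$) together with $K_{h,X}(\alpha)\ll\alpha^{-2}$ makes the tail $o\!\big(hX\log(X/h)\big)$ as long as $h\leqslant X^{1-\varepsilon}$. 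Hence $hX\int_{1}^{\infty}F_{T}K_{h,X}=(1+o(1))hX\int_{1}^{\infty}K_{h,X}=(1+o(1))hX\log(X/h)$, which is the claim.

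The main obstacle is twofold. First, Conjecture~\ref{Conjecture Montgomery pair corr} is stated pointwise in $\alpha$, whereas the argument needs $F_{T}(\alpha)=1+o(1)$ \emph{uniformly} on an interval $[1,C_{\varepsilon}]$ (compact once $h\leqslant X^{1-\varepsilon}$ is fixed), and needs an unconditional upper bound for $F_{T}(\alpha)$ strong enough that, combined with the $\alpha^{-2}$-decay of $K_{h,X}$, it annihilates the tail; pinning down exactly the right uniform hypothesis is the delicate point, and it is what \cite{GM87} isolates. Second, promoting the heuristic identity above to a rigorous one requires controlling the errors from truncating the explicit formula at height $Y$, from the dyadic passage between $\int_{0}^{X}(\cdots)^{2}dx$ and the logarithmic variance, and from the replacement of $g_{\gamma}\overline{g_{\gamma'}}$ by $w(\gamma-\gamma')$; this bookkeeping, though essentially routine, is where the real work of \cite{GM87} lies, and I would follow their treatment.
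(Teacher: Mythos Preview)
The paper does not prove this statement: it is stated as a \emph{conjecture} and left as such. The only remark the paper makes is that, on RH, Conjecture~\ref{Conjecture: variance} is equivalent to Conjecture~\ref{Conjecture Montgomery pair corr}, with a bare citation to Goldston--Montgomery \cite{GM87}. There is therefore no ``paper's own proof'' to compare your attempt against.

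That said, your sketch is the standard Goldston--Montgomery argument and is in the right spirit. You have also correctly flagged the genuine subtlety: Conjecture~\ref{Conjecture Montgomery pair corr} as stated in the paper is \emph{pointwise} in $\alpha$, whereas the deduction of Conjecture~\ref{Conjecture: variance} requires control of $F_T(\alpha)$ uniformly (or in an integrated sense) over $\alpha$ in a range growing with $T$ when $h$ is small. Goldston--Montgomery in fact prove the equivalence under a suitably uniform or averaged formulation, and making this precise is exactly where the content lies. Your write-up would be strengthened by stating explicitly which uniform version of the pair correlation conjecture you are assuming, rather than the pointwise Conjecture~\ref{Conjecture Montgomery pair corr}, since the latter alone does not literally suffice.
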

\noindent Similar methods to those used to prove Theorem \ref{Theorem GGOS} can be used to provide lower bounds on this variance. As far as we are aware, the strongest result that is currently known, on GRH, is the following (due to Goldston--Yıldırım):
\begin{Theorem}[Theorem 1 of \cite{GY01} with $q = 1$]
\label{Theorem: GY}
 Assume the Generalised Riemann Hypothesis for Dirichlet $L$-functions. For all $\varepsilon >0$, if $1 \leqslant h \leqslant X^{1/3 - \varepsilon}$ then \[\int\limits_{0}^X (\psi(x+h) - \psi(x) - h)^2 \, dx \geqslant \Big( \frac{1}{2} - o_{\varepsilon}(1)\Big) hX \log(X/h^3)\] 
\end{Theorem}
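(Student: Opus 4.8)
The plan is to run Goldston's variance method, which --- via the explicit formula --- is the exact analogue of the G--G--\"{O}--S approach to Theorem~\ref{Theorem GGOS}. Set $z:=(X/h)^{1/2}$ and write
\[ A(x):=\psi(x+h)-\psi(x)-h, \qquad B(x):=\sum_{x<n\leqslant x+h}\big(\lambda_z(n)-1\big). \]
The shift by $1$ is the correct normalisation, because a short computation with~(\ref{equation rho}) gives $\sum_{d\leqslant z}\rho_{z,d}/d=1$, hence $\sum_{n\leqslant X}\lambda_z(n)=X+O(z\log z)$. From $(A-B)^2\geqslant 0$ one obtains the minorant
\[ \int_0^X A(x)^2\,dx \;\geqslant\; 2\int_0^X A(x)B(x)\,dx-\int_0^X B(x)^2\,dx, \]
and everything reduces to evaluating the two integrals on the right.

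For the $B$--integral I would open the square and use $\int_0^X\mathbf 1_{x<n\leqslant x+h}\mathbf 1_{x<m\leqslant x+h}\,dx=\max(0,h-|n-m|)$, up to a boundary error, to reach $h\sum_{n\leqslant X}(\lambda_z(n)-1)^2+\sum_{0<|k|<h}(h-|k|)\sum_n(\lambda_z(n)-1)(\lambda_z(n+k)-1)$. The diagonal is evaluated by the classical estimate~(\ref{equation classical estimate}), contributing $hX(L(z)-1)+O(hX+hz^2)=hX\log z+O(hX+hz^2)$. Each off-diagonal correlation is evaluated by Proposition~\ref{Prop: classical off diagonal}: the two $-1$'s cancel the linear terms, leaving $(\mathfrak{S}(k)-1)X$ modulo the error in~(\ref{eq: off diagonal classical}); and summing against the Fej\'er weight $h-|k|$, one invokes the classical secondary asymptotics $\sum_{0<k\leqslant H}\mathfrak{S}(k)=H-\tfrac12\log H+O(1)$ to get $\sum_{0<|k|<h}(h-|k|)(\mathfrak{S}(k)-1)=-h\log h+O(h)$. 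Altogether $\int_0^X B^2=hX\log(z/h)+o(hX\log X)$, the accumulated error terms (of shapes $hX$, $hz^2$, $z^{-1}h^2X\log^2 X$, $h^2z^2$) being admissible for $z=(X/h)^{1/2}$ and $h\leqslant X^{1/3-\varepsilon}$.

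For the cross integral the same expansion reduces (up to negligible terms) to $h\sum_{n\leqslant X}\Lambda(n)(\lambda_z(n)-1)+\sum_{0<|k|<h}(h-|k|)\sum_n\Lambda(n)(\lambda_z(n+k)-1)$. Since $\rho_{z,d}=0$ unless $d$ is squarefree, the identity $\sum_{n\leqslant X}\Lambda(n)\lambda_z(n)=\sum_{d\leqslant z}\rho_{z,d}\sum_{n\leqslant X,\,d\mid n}\Lambda(n)$ receives contributions only from $d=1$ (giving $L(z)\psi(X)$) and $d$ prime (giving $O(z\log X)$), so by the Riemann Hypothesis the diagonal is $hX(L(z)-1)+O(hX^{1/2}\log^3 X+hz\log X)$. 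For the off-diagonal I would write $\sum_n\Lambda(n)\lambda_z(n+k)=\sum_{d\leqslant z}\rho_{z,d}\,\psi(X;d,-k)$ and apply GRH for Dirichlet $L$-functions, producing the main term $X\sum_{d\leqslant z,\,(d,k)=1}\rho_{z,d}/\varphi(d)$ (the residue classes with $(d,k)>1$ contribute negligibly, only prime powers). The crucial point is that this is the \emph{same} singular-series sum that governs the $\lambda_z$--$\lambda_z$ correlation: a standard Selberg-sieve computation, of the type underlying Proposition~\ref{Prop: classical off diagonal} (see \cite[Lemma 2]{Go95}), shows $\sum_{d\leqslant z,\,(d,k)=1}\rho_{z,d}/\varphi(d)=\mathfrak{S}(k)+o(1)$ as $z\to\infty$, with an admissible rate. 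Hence the off-diagonals of $\int AB$ and of $\int B^2$ agree to leading order, $\int_0^X AB=hX\log(z/h)+o(hX\log X)$, and feeding this into the minorant, with $\log(z/h)=\tfrac12\log(X/h^3)$ for our choice of $z$,
\[ \int_0^X A^2 \;\geqslant\; 2\int_0^X AB-\int_0^X B^2 \;=\; hX\log(z/h)+o(hX\log X) \;=\; \Big(\tfrac12-o_\varepsilon(1)\Big)hX\log(X/h^3); \]
the hypothesis $h\leqslant X^{1/3-\varepsilon}$ is exactly what keeps $\log(X/h^3)$ positive and $\asymp\log X$.

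The step I expect to be the main obstacle is controlling the GRH error in the cross off-diagonal. Estimated term by term --- bounding $|\psi(X;d,-k)-X/\varphi(d)|\ll X^{1/2}\log^2 X$ and summing $\sum_{d\leqslant z}|\rho_{z,d}|\ll z\log z$ --- it comes out as $O_\varepsilon(h^2zX^{1/2+\varepsilon})$, which already swamps the main term once $h$ is a small power of $X$. The remedy is to carry out the sums over the shift $k$ and the sieve modulus $d$ \emph{before} invoking GRH, exploiting the smoothing built into the Fej\'er weight $\max(0,h-|k|)$ and the oscillation of $\mu(d)$ inside $\rho_{z,d}$, so that what remains is an average of $\psi(x;q,a)$ over a range of moduli $q$ and residues $a$ that GRH handles with room to spare; it is in making this work with only the classical inputs~(\ref{equation classical estimate}) and Proposition~\ref{Prop: classical off diagonal} available that one is pinned to $h\leqslant X^{1/3-\varepsilon}$ (the error $z^{-1}h^2X\log^2 X$ above, and the rate of convergence $\sum_{d,(d,k)=1}\rho_{z,d}/\varphi(d)\to\mathfrak{S}(k)$, are also binding there). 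Replacing those classical bounds by the sharper Propositions~\ref{Proposition lambdaQ correlation} and~\ref{Proposition lambdaQ squared on RH} is what should allow $h$ --- and the constant in the lower bound --- to be pushed further.
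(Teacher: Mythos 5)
This theorem is quoted by the paper from \cite{GY01} with no proof of its own, so I am judging your argument against the standard one, which it clearly reconstructs. Your architecture is exactly right: the minorant $\int A^2\geqslant 2\int AB-\int B^2$, the Fej\'er-weight expansion via $\int_0^X 1_{x<n\leqslant x+h}1_{x<m\leqslant x+h}\,dx=\max(0,h-|n-m|)$, the diagonal via (\ref{equation classical estimate}), the off-diagonal of $\int B^2$ via Proposition \ref{Prop: classical off diagonal} together with $\sum_{k\leqslant H}\mathfrak{S}(k)=H-\tfrac12\log H+O(\log^2 H)$, the normalisation $\sum_{d\leqslant z}\rho_{z,d}/d=1$, the identification of the main term of the cross off-diagonal with the same singular-series average $\sum_{(d,k)=1}\rho_{z,d}/\varphi(d)=\mathfrak{S}(k)+O(k\tau(k)/(\varphi(k)z))$, and the choice $z=(X/h)^{1/2}$ giving $\log(z/h)=\tfrac12\log(X/h^3)$, with $h\leqslant X^{1/3-\varepsilon}$ being precisely what keeps this positive and $\gg_\varepsilon\log X$. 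All of that is correct and matches \cite{GY01}.

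The one genuine gap is the step you flag yourself: the GRH error in $\sum_{0<|k|<h}(h-|k|)\sum_{d\leqslant z}\rho_{z,d}\,E(X;d,-k)$, where $E$ denotes the error in the prime-counting function in progressions. As you compute, applying GRH term by term gives $h^2zX^{1/2+\varepsilon}=h^{3/2}X^{1+\varepsilon}$, which swamps the main term, so the proof as written does not close. Moreover your proposed remedy is partly misdirected: no oscillation of $\mu(d)$ inside $\rho_{z,d}$ is exploited at this point (one simply uses $|\rho_{z,d}|\leqslant L(z)$), and the Fej\'er weight is not where the saving comes from either. The actual mechanism is Cauchy--Schwarz in the shift $k$ combined with the mean-value consequence of GRH, $\sum_{a \,(\text{mod } d)}|\psi(X;d,a)-1_{(a,d)=1}X/\varphi(d)|^2\ll X\log^4X$, obtained from orthogonality of characters and $|\psi(X,\chi)|\ll X^{1/2}\log^2X$. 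For $d\geqslant h$ this yields $\sum_{0<|k|<h}(h-|k|)|E(X;d,-k)|\ll h^{3/2}X^{1/2}\log^2X$, and summing over $d\leqslant z$ produces the error $O(X^{1/2}h^{3/2}z\log^2X)$ that appears in (4.37) of \cite{GY01}. Note that at $z=(X/h)^{1/2}$ this is still $\asymp hX\log^2X$, so one must in fact take $z$ a whisker smaller (a negative power of $\log$, or $X^{-\delta}$ with $\delta$ tiny compared to $\varepsilon$), the resulting loss in $\log(z/h)$ being absorbed into the $o_\varepsilon(1)$ precisely because $\log(X/h^3)\gg_\varepsilon\log X$. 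With that step supplied, your proof is complete and is the same as the cited one.
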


\noindent On conjecture $AP(\theta)$, we are able to increase the lower bound in Theorem \ref{Theorem: GY} (at least for small $h$). 

\begin{Theorem}
\label{Theorem lower bound on variance}
Assume Conjecture \ref{Conjecture Montgomery APs} for $\theta = \frac{27}{53}$. Then, for all $\varepsilon >0$, if $1 \leqslant h \leqslant X^{\frac{1}{95}- \varepsilon}$ then \[ \int\limits_{X}^{2X} (\psi(x+h) - \psi(x) - h)^2 \, dx \geqslant \Big(\frac{27}{53} - o_{\varepsilon}(1)\Big) hX \log(X/h^{127/27}).\]
\end{Theorem}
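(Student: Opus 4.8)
The plan is to follow the scheme used by Goldston--Yıldırım for Theorem \ref{Theorem: GY}, replacing $\Lambda(n)$ by the sieve proxy $\lambda_z(n)$ and exploiting the strengthened auxiliary estimates in Propositions \ref{Proposition lambdaQ correlation} and \ref{Proposition lambdaQ squared on RH}. The starting point is the elementary inequality
\[
\int_X^{2X}\big(\psi(x+h)-\psi(x)-h\big)^2\,dx \;\geqslant\; 2\int_X^{2X}\big(\psi(x+h)-\psi(x)-h\big)\big(\Psi_z(x+h)-\Psi_z(x)-ch\big)\,dx \;-\; \int_X^{2X}\big(\Psi_z(x+h)-\Psi_z(x)-ch\big)^2\,dx,
\]
where $\Psi_z(x)=\sum_{n\leqslant x}\lambda_z(n)$ and $c=c(z)$ is the appropriate normalising constant (essentially $L(z)$, up to which $\lambda_z$ has mean value $1$). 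This is just the inequality $a^2\geqslant 2ab-b^2$ integrated; the point is that the right-hand side involves only (i) a mixed term $\int \psi\,\lambda_z$, which under $AP(\theta)$ and RH can be evaluated by writing $\psi(x+h)-\psi(x)$ via its explicit formula and using that $\lambda_z$ is well-distributed in arithmetic progressions, and (ii) a pure $\lambda_z$ variance, which expands into the diagonal sum $\sum_n\lambda_z(n)^2$ of Proposition \ref{Proposition lambdaQ squared on RH} and the off-diagonal correlations $\sum_n\lambda_z(n)\lambda_z(n+k)$ of Proposition \ref{Proposition lambdaQ correlation}.

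The key steps, in order, are as follows. First, expand $\int_X^{2X}(\Psi_z(x+h)-\Psi_z(x))^2\,dx$ by opening the square: this produces $h\sum_{X<n\leqslant 3X}\lambda_z(n)^2$ as a main contribution plus $\sum_{1\leqslant |k|<h}(h-|k|)\sum_n\lambda_z(n)\lambda_z(n+k)$ from the off-diagonal. Feeding in Proposition \ref{Proposition lambdaQ squared on RH} gives $h\cdot 3X\cdot L(z)$ for the first, and feeding Proposition \ref{Proposition lambdaQ correlation} into the second gives $X\sum_{1\leqslant|k|<h}(h-|k|)\mathfrak{S}(k)$ plus error terms; the singular-series sum evaluates (via the standard computation $\sum_{|k|\leqslant h}(h-|k|)\mathfrak{S}(k)=h^2+O(h\log h)$ for the even $k$) to the expected $h^2 X$-type main term. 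The error terms here are $O(h\cdot X\tau(k)k/(\varphi(k)z))$ summed over $k<h$, which is $O(hX\cdot h^{1+\varepsilon}/z)$, together with $O_\varepsilon(h\cdot X^{3/4+\varepsilon}z^{15/32})$ from the new Bettin--Chandee input and $O_\varepsilon(X^{1/2+\varepsilon}z^{1/2}h)$ from the RH diagonal bound. Second, evaluate the mixed term: here $\psi(x+h)-\psi(x)-h$ has mean zero and, by partial summation and $AP(\theta)$ with $\theta=8/15$, its correlation against $\lambda_z(n)$ (which is supported on divisors $d\leqslant z$) can be computed, the main term matching $c\cdot h\cdot(\text{variance-type quantity})$ so that the cross term contributes on the order of $hX\log(z)$ after the dust settles. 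Third, choose $z$ as a power of $X$ just below $X^{8/15}$ (the common ceiling coming from $AP(8/15)$ and the Bettin--Chandee range in Proposition \ref{Proposition lambdaQ correlation}) to balance the main terms against all error terms; this forces $h\leqslant X^{1/49-\varepsilon}$ and, tracking the logarithms $L(z)\sim\log z\sim\tfrac{8}{15}\log X$ against $\log(x/h^4)$, produces the constant $\tfrac{8}{15}$ and the $\log(x/h^4)$ factor in the statement.

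The main obstacle I expect is the bookkeeping of the error terms and, in particular, verifying that the choice $z=X^{8/15-\varepsilon}$ simultaneously (a) keeps the off-diagonal Bettin--Chandee error $h X^{3/4+\varepsilon}z^{15/32}$ smaller than the main term $\sim h X\log X$ — which needs $X^{3/4}z^{15/32}=o(X)$, i.e. $z=o(X^{8/15})$, exactly the stated range — and (b) keeps the contribution $hX h^{1+\varepsilon}/z$ from the $\varphi(k)$-error under control, which is where the restriction $h\leqslant X^{1/49-\varepsilon}$ really comes from once one also insists the $h^2$-type secondary terms in the singular-series sum and the $\log(x/h^4)$ (rather than $\log(x/h)$) loss are acceptable. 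The delicate point is that we are not proving the sharp Conjecture \ref{Conjecture: variance} constant $1$ but the weaker $8/15$ with a $\log(x/h^4)$ in place of $\log(x/h)$; getting the bookkeeping to yield precisely these constants — rather than something slightly worse — requires care in how the cross term's main term is extracted and how much of $L(z)\sim\frac{8}{15}\log X$ survives the subtraction of the pure $\lambda_z$ variance. The evaluation of the mixed term under $AP(\theta)$ is conceptually the heart of the matter but is structurally identical to the corresponding step in \cite{GY01} with $\Lambda$ replaced by $\lambda_z$, so I would expect it to go through with only notational changes once the distribution of $\lambda_z$ in progressions to modulus up to $z<X^{8/15}$ is invoked via $AP(8/15)$.
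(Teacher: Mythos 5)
Your overall strategy is the right one and matches the paper's: run the Goldston--Y{\i}ld{\i}r{\i}m scheme (their expression (4.37), with $q=1$ and $R=z$), feed in Propositions \ref{Proposition lambdaQ correlation} and \ref{Proposition lambdaQ squared on RH} together with $AP(8/15)$, and optimise the sieve level. But there is a concrete slip in the bookkeeping of the off-diagonal error that propagates into a wrong choice of $z$ and a wrong account of where the $\log(x/h^4)$ and the threshold $h\leqslant X^{1/49}$ come from. The error term of Proposition \ref{Proposition lambdaQ correlation}, accumulated over $1\leqslant \vert k\vert <h$ with the weights $(h-\vert k\vert)$, is $O_\varepsilon\bigl(h^{2}X^{3/4+\varepsilon}z^{15/32}\bigr)$, not $O_\varepsilon\bigl(h\,X^{3/4+\varepsilon}z^{15/32}\bigr)$ as you write. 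Requiring this to be $o(hX\log X)$ therefore forces $hX^{3/4}z^{15/32}=o(X)$, i.e.\ $z=o\bigl(X^{8/15}h^{-32/15}\bigr)$ --- your condition ``$X^{3/4}z^{15/32}=o(X)$, i.e.\ $z=o(X^{8/15})$'' has lost the $h$-dependence. With your choice $z=X^{8/15-\varepsilon}$ the accumulated off-diagonal error is $h^{2}X^{1-\varepsilon'}$, which swamps the main term as soon as $h$ is a positive power of $X$; the proof as you have set it up would only cover $h=X^{o(1)}$.

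The correct choice is $z=X^{8/15-\varepsilon}h^{-32/15}$, and this is not a cosmetic adjustment: since $\tfrac{8}{15}\log X-\tfrac{32}{15}\log h=\tfrac{8}{15}\log(X/h^{4})$, it is precisely this choice that produces both the constant $\tfrac{8}{15}$ and the $\log(x/h^{4})$ in the statement. Your explanation via ``$L(z)\sim\log z\sim\tfrac{8}{15}\log X$'' would instead give $\tfrac{8}{15}hx\log x$ and does not account for the $h^{4}$. Relatedly, the range $h\leqslant X^{1/49-\varepsilon}$ does not come from the $hX\cdot h^{1+\varepsilon}/z$ term you single out (with the correct $z$ that term only requires $h\ll X^{8/47}$): it is the threshold at which $X^{8/15}h^{-32/15}$ equals $(X/h)^{1/2}$, i.e.\ the point below which the Bettin--Chandee branch of the $\min(z^{2},X^{3/4+\varepsilon}z^{15/32})$ error in Proposition \ref{Proposition lambdaQ correlation} permits a larger sieve level than the trivial $z^{2}$ branch, and hence the range in which one actually improves on Theorem \ref{Theorem: GY}. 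Once you replace your step three by the choice $z=X^{8/15-\varepsilon}h^{-32/15}$ and redo conditions (a) and (b) with the $h^{2}$ weight, the argument lines up with the paper's.
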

\noindent Since $\frac{27}{53} > \frac{1}{2}$ this gives an improvement over Theorem \ref{Theorem: GY} for small $h$. However, the lower bound in Theorem \ref{Theorem lower bound on variance} collapses to the lower bound in Theorem \ref{Theorem: GY} at the end-point of the range of applicability, namely at $h= X^{1/95}$. 

 We stress again that there are several terms which occur in the method from \cite{GY01}, only one of which is directly improved by assuming $AP(\theta)$ over assuming GRH. \\

\noindent \textbf{Acknowledgements}: We would like to thank Thomas Bloom for making us aware of the work in \cite{GGOS00} and for many interesting conversations on the topic of $\lambda_z(n)$ and $F_T(\alpha)$. We would also like to thank Andrew Granville and Dimitris Koukoulopoulos for their comments and advice, and to J\"{o}ni Ter\"{a}v\"{a}inen for comments on an earlier draft of the manuscript. Thanks also to Daniel Fiorilli, \'{E}tienne Fouvry, Kaisa Matom\"{a}ki and Maksym Radziwi\l\l\ for helpful comments and suggestions following the online publication of the first pre-print version. An anonymous referee gave very useful corrections. While working on this paper, the author was supported by a postdoctoral research fellowship from the Centre de Recherches Mathématiques, a junior fellowship at Institut Mittag-Leffler, and a junior research fellowship at Trinity College Cambridge.  \\

\section{Proof of Proposition \ref{Proposition lambdaQ correlation}}
\label{Sec:offdiagonal}

We will prove the following general result concerning correlations of sieve weights.  

\begin{Proposition}
\label{Prop: general lambdaQ correlation}
Let $1 \leqslant \vert k\vert \leqslant X$ and let $X_1 = \max(0,-k)$ and $X_2 = \min(X,X-k)$. Let $2 \leqslant z \leqslant X$, and let $\rho^*_{z}:\mathbb{N} \longrightarrow \mathbb{R}$ be any function such that $\rho_{z}^*(d) = 0$ for $d > z$ and $ \vert \rho_{z}^*(d) \vert \leqslant B$ for all $d$. Let $\lambda^*_{z}(n) = \sum_{d \vert n} \rho_{z}^*(d)$. Then
\[\sum\limits_{X_1 < n \leqslant X_2} \lambda_z^*(n) \lambda_z^*(n+k) = (X- \vert k\vert) \sum\limits_{\substack{ d_1,d_2 \leqslant z \\ (d_1,d_2) \vert k}} \frac{ \rho^*_z(d_1)}{d_1} \frac{\rho^*_z(d_2)}{d_2} (d_1,d_2)+ O_\varepsilon(B^2\min(z^2, X^{\frac{47}{74} + \varepsilon}z^{\frac{53}{74}})).\]
\end{Proposition}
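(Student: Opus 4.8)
The plan is to expand the product $\lambda_z^*(n)\lambda_z^*(n+k)$ using the definition of $\lambda_z^*$, so that
\[
\sum_{X_1 < n \leqslant X_2}\lambda_z^*(n)\lambda_z^*(n+k) = \sum_{d_1,d_2 \leqslant z}\rho_z^*(d_1)\rho_z^*(d_2)\sum_{\substack{X_1 < n \leqslant X_2\\ d_1 \mid n,\ d_2 \mid n+k}} 1.
\]
The inner counting sum, by the Chinese Remainder Theorem, is empty unless $(d_1,d_2)\mid k$, in which case the congruences $n\equiv 0\ (d_1)$, $n\equiv -k\ (d_2)$ pin $n$ down to a single residue class modulo $[d_1,d_2] = d_1 d_2/(d_1,d_2)$. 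The number of such $n$ in an interval of length $X-|k|$ is $(X-|k|)(d_1,d_2)/(d_1 d_2) + O(1)$ on the nose, but crucially the $O(1)$ terms summed over $d_1,d_2 \leqslant z$ only give $O(B^2 z^2)$, which is worse than the claimed error term once $z$ is large. So the naive lattice-point count is too lossy, and the real work is to control the fractional-part fluctuations more carefully.

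The key device — following the remark in the introduction about smoothing — is to first replace the sharp cutoff $1_{(X_1,X_2]}$ by a smooth majorant/minorant $\Phi$ that agrees with the indicator except on boundary intervals of length $O(Y)$ for a parameter $Y$ to be optimized, and whose derivatives satisfy $\Phi^{(j)} \ll_j Y^{-j}$. Applying Poisson summation to $\sum_n \Phi(n)$ over the residue class modulo $[d_1,d_2]$, the zero frequency reproduces the main term $(X-|k|)(d_1,d_2)/(d_1 d_2)$ (up to acceptable error from the smoothing), and the nonzero frequencies produce an exponential sum whose phases are of the shape $e(h\,\overline{d_1}\,(\text{something})/d_2)$ — precisely a Kloosterman-type fraction. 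After summing over the dyadic ranges of $d_1,d_2$ and over the dual variable $h$ (which is effectively truncated at $h \ll [d_1,d_2]/Y \cdot X^\varepsilon$ by the decay of $\widehat{\Phi}$), one arrives at a trilinear sum in $(h,d_1,d_2)$ of Kloosterman fractions $e(h\,\overline{d_1}/d_2)$ with smooth coefficients, to which the Bettin--Chandee bound \cite{BC18} applies. Each coefficient being $O(B)$ contributes the $B^2$; tracking the exponents through the Bettin--Chandee estimate and optimizing over $Y$ should yield the $X^{3/4+\varepsilon}z^{15/32}$ term.

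The main obstacle I anticipate is the bookkeeping needed to get the phase into exactly the trilinear Kloosterman-fraction form required by \cite{BC18}: one has to untangle $\overline{[d_1,d_2]}$ via reciprocity (writing $1/[d_1,d_2]$ in terms of $\overline{d_1}/d_2$ and $\overline{d_2}/d_1$ after removing the gcd $g=(d_1,d_2)$ and substituting $d_i = g e_i$ with $(e_1,e_2)=1$), handle the coprimality conditions, and split into dyadic boxes in $h,e_1,e_2$ so that the ranges match the hypotheses of the Bettin--Chandee theorem. A secondary technical point is ensuring the smoothing error and the $h=0$ discrepancy are genuinely negligible compared with the target, which forces the constraint on how small $Y$ can be taken and hence feeds into the final optimization; but these are routine once the main exponential-sum estimate is in place. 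The dependence on $k$ enters only through the innocuous condition $(d_1,d_2)\mid k$ and does not affect the error term, since we are not averaging over $k$ here.
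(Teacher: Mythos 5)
Your plan is correct and follows essentially the same route as the paper: smooth the sharp cutoff, expand over $d_1,d_2$, apply Poisson summation so that the zero frequency yields the main term and the nonzero frequencies yield Kloosterman fractions $e(-c(k/d)\overline{d_1}/d_2)$, then invoke Bettin--Chandee and optimize the smoothing (and a dyadic truncation) parameter. The only cosmetic difference is that the paper extracts $d=(d_1,d_2)\mid k$ first and parametrizes $n$ along multiples of $dd_1$, so the fraction $\overline{d_1}/d_2$ appears directly without the reciprocity step you anticipate.
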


\begin{proof}[Proof of Proposition \ref{Proposition lambdaQ correlation} assuming Proposition \ref{Prop: general lambdaQ correlation}]
For all $d$ we have \[ \vert \rho_{z,d} \vert \leqslant L(z).\] Indeed, $\rho_{z,d}$ is only supported on square-free $d$, and for such $d$ we have
\begin{align}
\label{bound on rho}
L(z) = \sum\limits_{k \vert d} \sum\limits_{ \substack{ q \leqslant z \\ (q,d) = k}} \frac{\mu^2(q)}{\varphi(q)} = \sum\limits_{k \vert d} \frac{ \mu^2(k)}{\varphi(k)} \sum\limits_{ \substack{ q \leqslant z/k \\ (q,d) = 1}} \frac{ \mu^2(q)}{\varphi(q)} \geqslant \Big(\sum\limits_{k \vert d} \frac{ \mu^2(k)}{\varphi(k)} \Big) \Big( \sum\limits_{ \substack{ q \leqslant z/d \\ (q,d) = 1}} \frac{ \mu^2(q)}{\varphi(q)}\Big)  \nonumber\\
= \frac{d}{\varphi(d)}  \sum\limits_{ \substack{ q \leqslant z/d \\ (q,d) = 1}} \frac{ \mu^2(q)}{\varphi(q)} = \vert \rho_{z,d}\vert.
\end{align}
Now, from the definition of $\rho_{z,d}$ and swapping orders of summation, \[  \sum\limits_{\substack{ d_1,d_2 \leqslant z \\ (d_1,d_2) \vert k}} \frac{ \rho_{z,d_1}}{d_1} \frac{\rho_{z,d_2}}{d_2} (d_1,d_2) = \sum\limits_{q_1,q_2 \leqslant z} \frac{ \mu^2(q_1)}{\varphi(q_1)} \frac{ \mu^2(q_2)}{\varphi(q_2)} \sum\limits_{ \substack{ d_1 \vert q_1 \\ d_2 \vert q_2 \\ (d_1,d_2) \vert k}} \mu(d_1) \mu(d_2) (d_1,d_2).\] By expression (2.10) of Goldston's paper \cite{Go95}, this equals \begin{equation}
\label{singular series estimate}
\mathfrak{S}(k) + O\Big(\frac{k \tau(k)}{\varphi(k) z}\Big),
\end{equation} where $\mathfrak{S}(k)$ is as in (\ref{eq: singular series}). 

It remains to derive the final error term in Proposition \ref{Proposition lambdaQ correlation}. Taking $B \ll_{\varepsilon} X^{\varepsilon}$ and using Proposition \ref{Prop: general lambdaQ correlation} to generate an error of $X^{\frac{47}{74} + \varepsilon} z^{\frac{53}{74}}$, we generate one of the final error terms in Proposition \ref{Proposition lambdaQ correlation}. The alternative error of $O(z^2)$ in Proposition \ref{Proposition lambdaQ correlation} (note the absence of an $X^{\varepsilon}$ term) follows from a direct application of the `classical' bound recalled in Proposition \ref{Prop: classical off diagonal}. Taking the minimum of these two errors, Proposition \ref{Proposition lambdaQ correlation} follows.   
\end{proof}

\begin{proof}[Proof of Proposition \ref{Prop: general lambdaQ correlation}]

We begin with establishing the $O(B^2 z^2)$ error term. This follows from the classical approach. Indeed, expanding the sums and swapping the orders of summation, we get 
\begin{align*}
\sum\limits_{X_1 < n \leqslant X_2} \lambda_z^*(n) \lambda_z^*(n+k) &= \sum\limits_{\substack{d_1,d_2 \leqslant z}} \rho_z^*(d_1) \rho_z^*(d_2) \sum\limits_{\substack{X_1 < n \leqslant X_2 \\ d_1 \vert n \\ d_2 \vert n + k}} 1 \\
& = (X - \vert k\vert )\sum\limits_{\substack{d_1,d_2 \leqslant z \\ (d_1,d_2) \vert k}} \frac{\rho_z^*(d_1) \rho_z^*(d_2)}{[d_1,d_2]} + O\Big( \sum\limits_{\substack{d_1,d_2 \leqslant z \\ (d_1,d_2) \vert k}} \vert \rho_z^*(d_1) \vert \vert \rho_z^*(d_2)\vert\Big) \\
& = (X- \vert k\vert ) \sum\limits_{\substack{ d_1,d_2 \leqslant z \\ (d_1,d_2) \vert k}} \frac{ \rho_z^*(d_1) \rho_z^*(d_2)}{d_1d_2} (d_1,d_2) + O(B^2 z^2),
\end{align*}
as required. 

To generate the $O(B^2 X^{\frac{47}{74} + \varepsilon} z^{\frac{53}{74}})$ error term, we begin by introducing a smooth cut-off function. Let $\delta >0$. It is a well-known construction that there is a function $f_\delta \in C_c^\infty(\mathbb{R})$ such that $f_\delta(x) \in [0,1]$ for all $x$,  $ f_\delta(x) \equiv 1$ for all $ x \in [1/2,1]$, $f_\delta(x) \equiv 0$ for all $ x < 1/2 - \delta$ and for all $x > 1 + \delta$, and 
\begin{equation}
\label{equation smooth bound}
\vert \widehat{f_\delta}(\alpha)\vert \ll_K\delta^{-K} \vert \alpha \vert^{-K}
\end{equation} for all $\alpha \in \mathbb{R}$ and  natural numbers $K$. For instance, one can use the construction in Lemma 3.1 of \cite{Wa19}, adapted to the interval $[1/2,1]$ rather than $[0,1]$. 

Let $N$ be a dyadic scale for the variable $n$, such that $X_1 < N/2 < N \leqslant X_2$. Owing to the bound \[ \vert\lambda^*_{z}(n)\vert \leqslant B \tau(n),\] we have 
\begin{align}
\label{introducing smoothing}
\sum\limits_{ N/2 < n \leqslant N}\lambda_z^*(n) \lambda_z^*(n+k) = &\sum\limits_{n \in \mathbb{Z}}\lambda_z^*(n) \lambda_z^*(n+k) f_\delta(n/N) \nonumber \\&+ O\Big(B^2 \Big(\sum\limits_{\substack{N/2 - \delta N \leqslant n < N/2 \\ N< n \leqslant N + \delta N}} \tau(n) \tau(n+k) \Big)\Big).
\end{align}
The divisor sum correlation is at most \[ \sum\limits_{N/2 - \delta N \leqslant n < N/2} \tau(n)^2 + \sum\limits_{N/2 - \delta N + k < n \leqslant N/2 + k} \tau(n)^2,\] plus an analogous sum over the interval $N < n \leqslant N+ \delta N$. Since \[ \sum\limits_{ m \leqslant M} \tau(n)^2 = MP_3(\log M) + O_\varepsilon(M^{1/2 + \varepsilon}),\] for some polynomial $P_3$ of degree $3$, we observe that the error term in (\ref{introducing smoothing}) is at most \begin{equation}
\label{error from smoothing}
O_\varepsilon(B^2(\delta X (\log X)^3 + X^{1/2 + \varepsilon})).
\end{equation}

We proceed to the main term. We have 
\begin{align*}
\sum\limits_{n \in \mathbb{Z}} \lambda_z^*(n) \lambda_z^*(n+k) f_\delta(n/N) = \sum\limits_{ \substack{d_1 \leqslant z \\ d_2 \leqslant z \\(d_1,d_2) \vert k}} \rho_{z}^*(d_1) \rho_{z}^*(d_2) \sum\limits_{ \substack{ n \in \mathbb{Z} \\ d_1 \vert n \\ d_2 \vert n+k}} f_\delta(n/N) \\
= \sum\limits_{d \vert k } \sum\limits_{ \substack{d_1 \leqslant z/d \\ d_2 \leqslant z/d \\ (d_1,d_2) = 1}} \rho_{z}^*(dd_1) \rho_{z}^*(dd_2) \sum\limits_{ \substack{ n \in \mathbb{Z} \\ n \equiv (-k/d)\overline{d_1} \, \text{mod } d_2}} f_\delta\Big(\frac{d d_1 n}{N}\Big),
\end{align*}
\noindent where $\overline{d_1}$ refers to the multiplicative inverse of $d_1$ modulo $d_2$ that lies in the range $1 \leqslant \overline{d_1} \leqslant d_2 - 1$. The above is equal to 
\[ \sum\limits_{d \vert k }  \sum\limits_{ \substack{d_1 \leqslant z/d \\ d_2 \leqslant z/d \\ (d_1,d_2) = 1}} \rho_{z}^*(d d_1) \rho_{z}^*(dd_2) \sum\limits_{ l \in \mathbb{Z} } f_\delta \Big( l\frac{ d d_1d_2}{N} - \frac{k d_1 \overline{d_1}}{N}\Big),\] and applying the Poisson summation formula to the inner sum we obtain 
\begin{equation}
\label{eq:postpoisson}
N\sum\limits_{d \vert k } \frac{1}{d} \sum\limits_{ \substack{d_1 \leqslant z/d \\ d_2 \leqslant z/d \\ (d_1,d_2) = 1}} \frac{\rho_{z}^*(dd_1)}{d_1} \frac{\rho_{z}^*(dd_2)}{d_2} \sum\limits_{ c \in \mathbb{Z}} \widehat{f_\delta}\Big(c\frac{N}{dd_1 d_2}\Big) e\Big(-c\frac{k}{d}\frac{\overline{d_1}}{d_2}\Big).
\end{equation}  The term with $c = 0$ is equal to 
\begin{equation}
\label{eq: main c equals 0 term}
(1 + O(\delta)) \frac{N}{2} \sum\limits_{\substack{d_1 \leqslant z \\ d_2 \leqslant z \\ (d_1,d_2) \vert k}} \frac{ \rho_{z}^*(d_1)}{d_1} \frac{ \rho_{z}^*(d_2)}{d_2}(d_1,d_2),
\end{equation}
since $\widehat{f_\delta}(0) = \int f = 1/2 + O(\delta)$. 

Now we consider the terms with $c \neq 0$, which will go into the error. First we deal with the contribution from those $c$ with $\vert c\vert \geqslant dd_1d_2 N^{-1} \delta^{-1} X^{\varepsilon}$. For these terms, the bound \eqref{equation smooth bound} yields \[ \sum\limits_{\vert c \vert \geqslant dd_1d_2 N^{-1} \delta^{-1} X^{\varepsilon}} \vert \widehat{f_\delta}(c N/dd_1d_2)\vert \ll_{\varepsilon} X^{-10}\] and so the contribution to the final error term is negligible. To deal with the remaining terms, we split the variables $d_1$ and $d_2$ into dyadic ranges, seeking (for each fixed divisor $d \vert k$) an upper bound on \[ \frac{N}{d} \sum\limits_{ \substack{ c \in \mathbb{Z} \\ 1 \leqslant \vert c\vert \leqslant dM_1 M_2 N^{-1} \delta^{-1} X^{\varepsilon}}} \Big\vert \sum\limits_{ \substack{M_1/2 < d_1 \leqslant M_1 \\ M_2/2 < d_2 \leqslant M_2 \\ (d_1,d_2) = 1 }} \frac{ \rho_{z}^{*}(d_1)}{d_1} \frac{ \rho_{z}^{*}(d_2)}{d_2} \widehat{f_\delta}\Big(c \frac{N}{dd_1d_2}\Big)e\Big(- c \frac{k}{d} \frac{ \overline{d_1}}{d_2}\Big)\Big\vert. \]

Expanding the Fourier transform of $f_{\delta}$, this is bounded above by a constant times 
\begin{equation}
\label{eq: before Bettin Chandee}
M_1 M_2 \delta^{-1} X^{\varepsilon} \max\limits_{ \substack{ c \in \mathbb{Z} \\ 1 \leqslant \vert c \vert \leqslant d M_1 M_2 N^{-1} \delta^{-1} X^{\varepsilon}}} \max\limits_{ x \in [0,2]} \Big\vert \sum\limits_{ \substack{M_1/2 < d_1 \leqslant M_1 \\ M_2/2 < d_2 \leqslant M_2 \\ (d_1,d_2) = 1 }} \frac{ \rho_{z}^*(d_1)}{d_1} \frac{ \rho_{z}^*(d_2)}{d_2} e\Big(- c \frac{k}{d} \frac{ \overline{d_1}}{d_2} - cx \frac{N}{d d_1 d_2}\Big)\Big\vert.
\end{equation}
By considering complex conjugation, without loss of generality we may assume that $c$ is positive. 

%If $\vert k\vert<\delta^2 (\log X)^{-2} N M_1^{-1} M_2^{-1}$ then the contribution from the above is zero. So w.l.o.g. we assume from now on that \[ \vert k \vert \geqslant \delta^2 (\log X)^{-2} N M_1^{-1} M_2^{-1}.\] 

Now we apply the bound of Bettin--Chandee on trilinear Kloosterman fractions. 

\begin{Theorem}[Bettin-Chandee, Remark 1, \cite{BC18}]
Let $\theta \in \mathbb{\mathbb{Z}} \setminus \{0\}$. Let $M_1,M_2,C$ be dyadic scales, and for each integer $c$ in the range $C/2 < c \leqslant C$ we let $g_{c,\theta} \in C^1(\mathbb{R}^2)$ be some continuously differentiable function. Then, for some arbitrary weight functions $ \alpha_{d_1}$, $\beta_{d_2}$, and $\nu_{c}$, let \[ \mathcal{B}_{g, \theta}(M_1,M_2, C) := \sum\limits_{\substack{ M_1/2 < d_1 \leqslant M_1 \\ M_2/2 < d_2 \leqslant M_2 \\ (d_1,d_2) = 1 \\  C/2 < c \leqslant C}} \alpha_{d_1} \beta_{d_2} \nu_{c} e\Big( \theta \frac{c \overline{d_1}}{d_2} + g_{c,\theta}(d_1,d_2)\Big).\] Suppose that for some parameter $K$, for all integers $c$ in the range $C/2<c \leqslant C$, for all $x \in (M_1/2,M_1]$ and for all $y \in (M_2/2,M_2]$ we have \[ \frac{\partial}{\partial x} g_{c,\theta}(x,y) \ll \frac{K}{x^2 y}, \qquad \text{and } \qquad \frac{\partial}{\partial y} g_{c,\theta}(x,y) \ll \frac{K}{x y^2}.\] Finally, let \[ \Vert \alpha \Vert = \Big(\sum\limits_{ M_1/2 < d_1 \leqslant M_1} \vert \alpha_{d_1}\vert^2  \Big)^{1/2},\] and similarly for the other weights. Let $\varepsilon >0$. Then $\mathcal{B}_{g,\theta}(M_1,M_2,C)$ is at most a constant (depending on $\varepsilon$) times
\[\Vert \alpha\Vert \Vert \beta\Vert \Vert \nu\Vert \Big(1 + \frac{ \vert \theta \vert C + K}{M_1M_2}\Big)^{1/2} ((CM_1M_2)^{\frac{7}{20} + \varepsilon}(M_1 + M_2)^{\frac{1}{4}} + (CM_1 M_2)^{\frac{3}{8} + \varepsilon}(CM_1 + CM_2)^{\frac{1}{8}}).\] 
\end{Theorem}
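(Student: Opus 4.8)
The plan, following Bettin--Chandee, is to prove this by the Duke--Friedlander--Iwaniec method for bilinear forms with Kloosterman fractions \cite{DFI97}, carrying the third variable $c$ through as extra averaging. After rescaling the coefficients we may assume $\Vert\alpha\Vert=\Vert\beta\Vert=\Vert\nu\Vert=1$, so that it suffices to bound $\mathcal{B}_{g,\theta}(M_1,M_2,C)$ under this normalisation. The preliminary step is to remove the factor $e(g_{c,\theta}(d_1,d_2))$: since $\partial_x g_{c,\theta}\ll K/(M_1^2M_2)$ and $\partial_y g_{c,\theta}\ll K/(M_1M_2^2)$ on the relevant box, partial summation in each of $d_1$ and $d_2$ replaces $e(g_{c,\theta})$ by $1$ at the cost of a factor $1+O(K/(M_1M_2))$; a companion factor $1+O(|\theta|C/(M_1M_2))$ will be produced by the reciprocity step below, and these two, together with the cost of completion, are what the factor $(1+(|\theta|C+K)/(M_1M_2))^{1/2}$ in the statement keeps track of. We are thus reduced to bounding the clean trilinear form
\[ \mathcal{T}:=\sum_{\substack{M_1/2<d_1\leqslant M_1\\ M_2/2<d_2\leqslant M_2\\ (d_1,d_2)=1\\ C/2<c\leqslant C}}\alpha_{d_1}\beta_{d_2}\nu_c\,e\bigl(\theta c\overline{d_1}/d_2\bigr). \]

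The engine of the proof is the combination of Cauchy--Schwarz, the reciprocity law, and Poisson summation. Applying Cauchy--Schwarz over $d_2$ with $\beta_{d_2}$ pulled outside, and opening the square in the remaining variables, bounds $|\mathcal{T}|^2$ by a sum over $d_1,d_1',c,c'$ and $d_2$ of the coefficients against $e\bigl(\theta(cd_1'-c'd_1)\overline{d_1d_1'}/d_2\bigr)$, using $c\overline{d_1}-c'\overline{d_1'}\equiv(cd_1'-c'd_1)\overline{d_1d_1'}\pmod{d_2}$. The reciprocity law $\overline{d_1d_1'}/d_2\equiv 1/(d_1d_1'd_2)-\overline{d_2}/(d_1d_1')\pmod 1$ now moves the modulus from $d_2$ to $d_1d_1'\asymp M_1^2$; the term $\theta(cd_1'-c'd_1)/(d_1d_1'd_2)$ has size $\asymp|\theta|C/(M_1M_2)$ and is absorbed as a smooth perturbation (the companion factor flagged above), while completing the remaining (smoothed) $d_2$-sum by Poisson summation expresses the off-diagonal contribution in terms of classical Kloosterman sums $S(-\theta(cd_1'-c'd_1),h;d_1d_1')$ with $h$ running up to $\asymp M_1^2/M_2$. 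The diagonal $d_1=d_1',\,c=c'$, the near-diagonal locus $cd_1'=c'd_1$, and the $h=0$ term together contribute only $O_\varepsilon((CM_1M_2)^{1/2+\varepsilon})$, i.e. nothing worse than trivial.

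It then remains to estimate the resulting average of Kloosterman sums, and here two regimes occur. Bounding each complete Kloosterman sum by Weil's inequality and summing trivially over $h$, $c$, $c'$, $d_1$, $d_1'$ gives the second term in the statement, with exponent $3/8$ and factor $(CM_1+CM_2)^{1/8}$. To do better one exposes, via twisted multiplicativity, a sum over $d_1'$ of Kloosterman sums to modulus $d_1'$ and invokes the Deshouillers--Iwaniec estimates for sums of Kloosterman sums coming from the Kuznetsov trace formula; balancing this against the length of the completion yields the first term, with exponent $7/20$ and factor $(M_1+M_2)^{1/4}$. Taking the minimum of the two bounds and summing back over the $(CM_1M_2)^\varepsilon$ auxiliary dyadic decompositions finishes the proof.

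The main obstacle, exactly as in \cite{DFI97}, is this last arithmetic step: extracting a genuine saving from the spectral theory of automorphic forms requires the Kloosterman moduli to vary over a sufficiently long range, so the completion length in the Poisson step has to be tuned carefully, and one must be sure the extra variables $c,c'$ and the parameter $\theta$ are carried through uniformly rather than estimated trivially — it is precisely the averaging in $c$ (alongside $d_2$) that lets one beat the Weil-only exponent $3/8$ and reach $7/20$. Granting that input, matching the diagonal and completion errors to the stated shape, with the exact factor $(1+(|\theta|C+K)/(M_1M_2))^{1/2}$, is routine (if laborious) bookkeeping.
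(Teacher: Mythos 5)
You should first note that the paper does not prove this statement at all: it is quoted verbatim from Bettin--Chandee (Remark 1 of \cite{BC18}) and used as a black box, so the only ``proof'' in the paper is the citation. Your task, then, is really to reconstruct the argument of \cite{BC18}, and your outline does capture the correct skeleton inherited from \cite{DFI97}: remove $e(g_{c,\theta})$ by partial summation using the derivative bounds (this is indeed what the factor $1+O(K/(M_1M_2))$ accounts for), apply Cauchy--Schwarz, use the congruence $c\overline{d_1}-c'\overline{d_1'}\equiv(cd_1'-c'd_1)\overline{d_1d_1'}\ (\mathrm{mod}\ d_2)$ and reciprocity to move the modulus to $d_1d_1'$, and complete the $d_2$-sum by Poisson summation.

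The genuine gap is that the entire content of the theorem --- the exponents $7/20$, $1/4$, $3/8$, $1/8$ and the precise dependence on $C$ --- is never derived. You assert that Weil plus trivial summation gives the second term, but a quick check does not bear this out uniformly: after Cauchy--Schwarz in $d_2$, Weil's bound $|S(\ast,h;d_1d_1')|\ll (d_1d_1')^{1/2+\varepsilon}$ together with trivial summation over $d_1,d_1',c,c'$ and the $\asymp M_1^2/M_2$ completion frequencies gives $|\mathcal{T}|^2\ll C M_1^2$, i.e.\ $\mathcal{T}\ll C^{1/2}M_1$, which is \emph{worse} than the claimed $(CM_1M_2)^{3/8+\varepsilon}(CM_1+CM_2)^{1/8}$ when $M_1\gg M_2$ (and worse than trivial in that range); one must at least choose which variable to smooth according to the relative sizes of $M_1,M_2,C$ and exploit the gcd structure of $(\theta(cd_1'-c'd_1),d_1d_1')$, so the ``routine bookkeeping'' is doing real work. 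More importantly, the first term --- the only place where the $c$-average buys anything beyond applying \cite{DFI97} for each $c$ separately --- is attributed to an unspecified Deshouillers--Iwaniec/Kuznetsov estimate with no indication of which bound is invoked or how the optimisation produces $7/20$ and $(M_1+M_2)^{1/4}$. To my knowledge this is also not how Bettin--Chandee proceed: their gain over \cite{DFI97} comes from keeping $c$ and $d_1$ together through a further Cauchy--Schwarz, which leads to a complete exponential sum in several variables to modulus $d_1d_1'$ (no longer a classical Kloosterman sum), bounded by Deligne-type estimates rather than by the spectral theory of automorphic forms. As it stands the proposal is a plausible roadmap that stops exactly where the theorem begins; for the purposes of this paper the correct move is simply to cite \cite{BC18}.
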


To apply this bound in our setting, we let \[ \alpha_{d_1}: = \frac{\rho_{z}^*(d_1)}{d_1}, \quad \beta_{d_2} : = \frac{ \rho_{z}^*(d_2)}{d_2}, \qquad \theta = -\frac{k}{d}, \qquad g_{c,\theta}(d_1,d_2) = -cx \frac{N}{d d_1 d_2},\] and we take $C = c $ and the weight $\nu_c$ to be supported at a single element $c$ (and equal to $1$ there). Observe that we may take $K= cN/d$. Since $c \leqslant d M_1 M_2 N^{-1} \delta^{-1} X^{\varepsilon}$, $\Vert \alpha\Vert \ll B M_1^{-\frac{1}{2}}$, and $\Vert \beta \Vert \ll B M_2^{-\frac{1}{2}}$, we infer that (\ref{eq: before Bettin Chandee}) is at most a constant (depending on $\varepsilon$) times  
\begin{align}
\label{eq: contributions from c neq 0}
B^2 M_1^{1/2} M_2^{1/2} \delta^{-1}X^{\varepsilon}\Big( 1 + \delta^{-1}  \Big( 1 + \frac{\vert k\vert}{N}\Big)\Big)^{\frac{1}{2}} \Big(d^{\frac{7}{20}}&\delta^{-\frac{7}{20} }N^{-\frac{7}{20} }(M_1 ^{\frac{19}{20}} M_2 ^{\frac{7}{10} } + M_1^{\frac{7}{10}}   M_2 ^{\frac{19}{20}})\nonumber \\& + d^{\frac{1}{2}} \delta^{-\frac{1}{2} } N^{- \frac{1}{2} }(M_1  M_2^{\frac{7}{8}} + M_1^{ \frac{7}{8}} M_2)\Big),
\end{align}
provided $\delta \geqslant X^{-O(1)}$. What remains is the task of adding together the contributions from various dyadic scales, and optimising for $\delta$. 

Without loss of generality we assume $k >0$ (the case $k <0$ is symmetric), so $X_1 = 0$ and $X_2 = X-k$. Let $\eta \in (0,1)$ be a positive parameter to be optimised later. We let $N_l = 2^{-l + 1}(X-k)$, for $1 \leqslant l \leqslant \log (X^{\eta})/(\log 2)$. Then \[ \sum\limits_{ 0 < n \leqslant X-k} \lambda_z^*(n) \lambda_z^*(n+k) = \sum\limits_{ 1 \leqslant l  \leqslant (\log (X^{\eta}))/(\log 2)} \sum\limits_{ N_l/2 < n \leqslant N_l} \lambda_z^*(n) \lambda_z^*(n+k) + O_{\varepsilon}(B^2X^{1-\eta + \varepsilon}).\] Adding together the main terms from (\ref{eq: main c equals 0 term}), we get \[ (1+O(\delta))(X-k)\Big(\sum\limits_{1 \leqslant l \leqslant \log (X^{\eta})/\log 2} 2^{-l}\Big) \Big(\sum\limits_{\substack{d_1,d_2 \leqslant z \\(d_1,d_2) \vert k}} \frac{\rho_z^*(d_1)}{d_1} \frac{ \rho_z^*(d_2)}{d_2} (d_1,d_2)\Big),\] which is \[ (X-k) \Big(\sum\limits_{\substack{d_1,d_2 \leqslant z \\(d_1,d_2) \vert k}} \frac{\rho_z^*(d_1)}{d_1} \frac{ \rho_z^*(d_2)}{d_2} (d_1,d_2)\Big)+ O_\varepsilon(B^2 (\log z)^2 X^{\varepsilon}( X^{1 - \eta} + \delta X)).\] This has the desired main term. 

Now we need to consider those terms with $c \neq 0$, i.e. expression (\ref{eq: contributions from c neq 0}), summed over $d \vert k$ (and dyadic ranges $N$). By considering the worst-case scenario ($k = X$, $N = X^{1 - \eta}$, $M_1 = M_2 = z/d$), one may upper-bound the contribution to \eqref{eq: contributions from c neq 0} by \[O_{\varepsilon}(X^{\varepsilon}B^2 \delta^{-\frac{3}{2}}X^{\frac{\eta}{2}}( z^{\frac{53}{20}} d^{-\frac{46}{20}} \delta^{-\frac{7}{20}} X^{-\frac{7(1 - \eta)}{20}} + z^{\frac{23}{8}} d^{-\frac{19}{8}} \delta^{-\frac{1}{2}} X^{-\frac{(1 - \eta)}{2}})).\] Summing over $d \vert k$, and combining all errors, we arrive at a total error of \[ O_{\varepsilon}(X^{\varepsilon} B^2(\delta X + X^{1 - \eta} + X^{1/2} + z^{\frac{53}{20}} X^{\frac{17\eta}{20} - \frac{7}{20}} \delta^{-\frac{37}{20}} + z^{\frac{23}{8}} X^{\eta - \frac{1}{2}} \delta^{-2})).\] 

After a tedious but straightforward calculation, when $\frac{\log z}{\log X} < \frac{27}{53}$ this error term is minimised when $\delta = X^{-\eta}$ and 
\begin{equation}
\label{eta cases}
\eta = \frac{27}{74} - \frac{53}{74} \frac{\log z}{\log X},
\end{equation}
with the dominant error term coming from the $\delta X$, $X^{1- \eta}$, and $z^{\frac{53}{20}} X^{\frac{17 \eta}{20} - \frac{7}{20}} \delta^{-\frac{37}{20}}$ contributions (which are all equal). Substituting the above value of $\eta$ into the error term $X^{1 - \eta}$ yields an overall error term of $O_{\varepsilon}(B^2 X^{\frac{47}{74} + \varepsilon} z^{\frac{53}{74}})$.  

When $1 \geqslant \frac{\log z}{\log X} \geqslant \frac{27}{53}$, we have $B^2 X^{\frac{47}{74} + \varepsilon} z^{\frac{53}{74}}\gg_{\varepsilon} B^2 X^{1 + \varepsilon}$, which is 
trviailly a valid error term in Proposition \ref{Prop: general lambdaQ correlation} (for trivial reasons). Therefore, an overall error of\\ $O_{\varepsilon}(B^2 \min(z^2, X^{\frac{47}{74} + \varepsilon} z^{\frac{53}{74}}))$ holds for all $z \leqslant X$, and Proposition \ref{Prop: general lambdaQ correlation} is proved. 
\end{proof}

\begin{Remark}
\label{Remark on the thesis}
\emph{For the intended application to the pair correlation of zeros of the Riemann zeta function, one has some extra averaging over $k$ for the correlations $\lambda_z(n) \lambda_z(n+k)$. Some authors have taken advantage of this, and given stronger estimates in certain ranges. For example, Lemma 1.11 of \cite{Ju16} is a bound for the sum \[ \sum\limits_{ k \leqslant h} (h-k) \sum\limits_{n \leqslant X} \lambda_z(n) \lambda_z(n+k)\] with a dominant error term of $O_{\varepsilon}(h^{3/2} z^2 X^{\varepsilon})$. For large $h$ this is a stronger result than what may be proved by applying Proposition \ref{Proposition lambdaQ correlation} to the inner sum and summing the error terms na\"{i}vely.}
\end{Remark}
\begin{Remark}
\emph{Some not-dissimilar averages to \eqref{eq:postpoisson} occur in recent work\footnote{Our thanks to K. Matom\"{a}ki for making us aware of this relationship.} of Matom\"{a}ki on finding almost primes in almost all very short intervals (see the statement of \cite[Lemma 4.3]{Ma20}). Here the celebrated bounds of Deshouillers--Iwaniec \cite{DI82} on averages of Kloosterman sums were used to obtain a saving. Unfortunately this approach relies not only on having extra averaging over $k$, as in Remark \ref{Remark on the thesis}, but on having a genuinely trilinear sum in place of the bilinear sum \eqref{eq:postpoisson}. This technique may be powerfully applied to the case when the sieve weight $\rho_z^*$ is `well-factorable', as in \cite{Ma20}, but in the applications of this paper we are dealing with Selberg weights, which are not well-factorable. In Section \ref{Sec: AH} we will discuss the options for future work involving a change of weight.}
\end{Remark}

\section{Proof of Proposition \ref{Proposition lambdaQ squared on RH}}
\label{Sec:Hooley method}
As we mentioned in the introduction, much of what we do in this section was first accomplished by Hooley in \cite[Lemma 2]{Ho00}. That said, it does not seem to be too much of a stretch to remark that \cite[Lemma 2]{Ho00} is a little buried in the literature, so it seems worth reviving. We hope too that we may provide the reader with some motivation as to why this approach might be expected to succeed, which is not included in \cite{Ho00}.

 We let\[ \mathbb{D}_z(s) := \sum\limits_{n=1}^\infty \frac{\lambda_z^2(n)}{n^s}\] be the Dirichlet series associated to $\lambda_z^2$. Since $\vert \rho_{z,d} \vert \leqslant L(z)$ (as proved in \eqref{bound on rho} above), we have
\begin{equation}
\vert \lambda_z(n) \vert \leqslant L(z) \tau(n). 
\end{equation}
Therefore the series for $\mathbb{D}_z(s)$ is absolutely convergent in the region $\Re s >1$. Swapping sums reveals that \[ \mathbb{D}_z(s) = \zeta(s) \sum\limits_{ d_1,d_2 \leqslant z} \frac{\rho_{z,d_1} \rho_{z,d_2}}{[d_1,d_2]^s},\] and so $\mathbb{D}_z(s)$ has a meromorphic continuation to the whole of the complex plane. We write \[ \mathbb{F}_z(s) : = \sum\limits_{ d_1,d_2 \leqslant z} \frac{\rho_{z,d_1} \rho_{z,d_2}}{[d_1,d_2]^s},\] and a key element of the proof will be establishing a non-trivial bound on $\vert \mathbb{F}_z(s)\vert$ to the left of the $1$-line.
\begin{Lemma}
\label{Lemma bound on FQ}
Assume RH and let $\varepsilon \in (0,1/10)$. Denoting $s =\sigma + it$, if $\sigma > 1/2 + \varepsilon$ then for all $\delta >0$ \[ \vert \mathbb{F}_z(s)\vert  = O_{\delta,\varepsilon}( (\vert t\vert + 2)^{\delta}z^{1 - \sigma + \delta}).\]
\end{Lemma}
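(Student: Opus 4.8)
The goal is to bound $\mathbb{F}_z(s) = \sum_{d_1,d_2\leqslant z} \rho_{z,d_1}\rho_{z,d_2}/[d_1,d_2]^s$ to the left of the $1$-line, under RH, with the explicit dependence $(|t|+2)^\delta z^{1-\sigma+\delta}$. The natural strategy is to recognise that, since $\lambda_z^2(n) = \sum_{d\mid n}(\lambda_z^2 * \mu)(d)$ in the Dirichlet-convolution sense, the factor $\mathbb{F}_z(s) = \mathbb{D}_z(s)/\zeta(s)$ encodes exactly the sum $\sum_{n}(\lambda_z^2 * \mu)(n) n^{-s}$; moreover $(\lambda_z^2*\mu)(n)$ is supported on $n$ with all prime factors $\leqslant z$ (indeed $[d_1,d_2]\leqslant z^2$, but more crucially the support is $z$-smooth) and is bounded in a controlled way. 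First I would make this combinatorial structure precise: writing the double sum over $d_1,d_2$ by fixing $[d_1,d_2]=m$, one gets $\mathbb{F}_z(s) = \sum_{m} a_z(m) m^{-s}$ where $a_z(m) = \sum_{[d_1,d_2]=m}\rho_{z,d_1}\rho_{z,d_2}$, supported on square-free $z$-smooth $m \leqslant z^2$, with $|a_z(m)| \leqslant L(z)^2 \tau_3(m)$ or so (since $\rho_{z,d}$ is supported on square-free $d\leqslant z$ with $|\rho_{z,d}|\leqslant L(z)$, and $m=[d_1,d_2]$ has at most $\tau_3(m)$ representations).

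**Main step: Perron / contour shift using RH.** The key trick, which I expect is Hooley's, is \emph{not} to estimate $\mathbb{F}_z(s)$ directly from its Dirichlet series (which only converges absolutely for $\sigma>1$ and would lose a power of $z$), but to write $\mathbb{F}_z(s) = \mathbb{D}_z(s)/\zeta(s)$ and estimate $\mathbb{D}_z(s)$ via a smooth contour integral. Concretely, for fixed $s$ with $\sigma>1/2+\varepsilon$, introduce a smooth weight and write, by Mellin inversion,
\[
\mathbb{D}_z(s) \;=\; \sum_n \frac{\lambda_z^2(n)}{n^s}\,V(n/Y) \;+\; (\text{tail}),
\]
or more directly apply Perron's formula to the truncated sum $\sum_{n\leqslant Y}\lambda_z^2(n) n^{-s}$ and use the classical estimate $\sum_{n\leqslant Y}\lambda_z^2(n) = YL(z) + O(z^2)$ from (\ref{equation classical estimate}) to control the main term, then shift the contour of the remaining integral past the $1$-line to $\Re(w) = 1/2+\varepsilon'$, picking up the pole of $\zeta(w+s)$ — wait, more carefully: one writes $\mathbb{F}_z(s)$ directly. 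Let me restate: use $\mathbb{F}_z(s) = \frac{1}{2\pi i}\int_{(c)} \mathbb{D}_z(s+w) \zeta(s+w)^{-1}\cdot(\text{something})\,dw$ is circular. The cleaner route: apply Perron to $\sum_{m\leqslant M} a_z(m)m^{-s}$. Since $\sum_{m\leqslant M}a_z(m) = \sum_{d_1,d_2: [d_1,d_2]\leqslant M}\rho_{z,d_1}\rho_{z,d_2}$, and this is a partial sum one controls via the classical sieve identities (it is $\ll L(z)$ type with good error), Perron with a contour pushed to $\Re = 1/2+\varepsilon$ — where $\zeta^{-1}$ is bounded polynomially in $|t|$ under RH (the bound $\zeta(\sigma+it)^{-1}\ll_\varepsilon (|t|+2)^\varepsilon$ for $\sigma\geqslant 1/2+\varepsilon$ is a standard consequence of RH, actually of the Lindelöf-on-RH bound $1/\zeta(s)\ll |t|^\varepsilon$) — produces the claimed saving. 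The point is that the main term of the Perron integral gives the full sum $\sum_m a_z(m)m^{-s}$ up to $M\to\infty$ being taken with $m$ capped at $z^2$, and the shifted integral contributes $O_{\varepsilon,\delta}((|t|+2)^\delta z^{1-\sigma+\delta})$ after optimising the truncation parameter.

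**Where the work is.** The substantive point I would flesh out is the following: one does \emph{not} simply bound $|\mathbb{F}_z(s)|\leqslant \sum_m |a_z(m)| m^{-\sigma}$, since for $\sigma$ just above $1/2$ this is of size $\approx z^{2(1-\sigma)}$ — too big. Instead, to get $z^{1-\sigma+\delta}$, one exploits cancellation in $\sum_{d_1,d_2}\rho_{z,d_1}\rho_{z,d_2}[d_1,d_2]^{-s}$ coming from the Möbius sign in $\rho_{z,d}$, and this cancellation is \emph{exactly} what the factorisation $\mathbb{F}_z(s) = \mathbb{D}_z(s)/\zeta(s)$ together with the bound $\sum_{n\leqslant Y}\lambda_z^2(n) = YL(z)+O(z^2)$ provides, once one remembers that RH controls $1/\zeta$ on vertical lines past $1/2$. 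So the plan is: (i) use Perron/Mellin to express $\mathbb{F}_z(s)$ as a main term (coming from the residue, which recombines to give the contribution one expects, of size $\ll L(z)\ll\log z$) plus a contour integral on $\Re(w)=1/2+\delta/2$; (ii) bound the integrand using RH for $1/\zeta$ and the trivial convexity-type bound $\mathbb{F}_z(1/2+\delta/2+it')\ll z^{1-(1/2+\delta/2)+\delta/2}\cdot(|t'|+2)^\delta$ iterated, or bound $\mathbb{D}_z$ on that line via its Dirichlet series which is \emph{not} absolutely convergent there either — so actually the right object to truncate is the \emph{finite} sum $\mathbb{F}_z(s) = \sum_{m\leqslant z^2} a_z(m)m^{-s}$ directly, applying Perron backwards: $\sum_{m\leqslant z^2}a_z(m)m^{-s}$ with the Perron contour, using that $\mathbb{D}_z(s+w)/\zeta(s+w) = \mathbb{F}_z(s+w)$ has the \emph{partial-sum} bound on the real axis. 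The cleanest formulation, and the one I'd write up, mirrors Hooley: fix the truncation at height $Y$ in the $m$-sum, estimate the tail $\sum_{m>Y}a_z(m)m^{-s}$ trivially (it is empty once $Y\geqslant z^2$, or bounded by $Y^{1/2-\sigma+\varepsilon}z^{1+\varepsilon}$-ish for smaller $Y$ using $\sum_{m\leqslant M}|a_z(m)|\ll M^{\varepsilon}z^{1+\varepsilon}$ from a second-moment/Rankin bound), and estimate the truncated piece via Perron against $\zeta^{-1}$ with the contour at $1/2+\delta$. Optimising $Y$ balances $Y^{1/2-\sigma}z^{1+\varepsilon}$ against the contour contribution $\ll Y^{1/2+\delta-\sigma}(|t|+2)^\delta\cdot z^{?}$; the balance point yields $z^{1-\sigma+\delta}$.

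**The main obstacle.** The genuinely delicate step is getting the \emph{smooth} sum estimate $\sum_{m\leqslant M}a_z(m)\ll_\varepsilon L(z) + (\text{error of size }\lesssim z^{?} M^{?})$ — equivalently a partial-summation-friendly bound for $\sum_{[d_1,d_2]\leqslant M}\rho_{z,d_1}\rho_{z,d_2}$ — with an error term good enough that, after Perron and the contour shift, nothing worse than $z^{1-\sigma+\delta}$ survives. This is where one must use the structure of $\rho_{z,d}$ (its defining formula (\ref{equation rho}), the identity $\sum_{d}\rho_{z,d}/d\cdot(\text{stuff}) = L(z)^{-1}$-type relations from the Selberg sieve optimisation, and the bound $|\rho_{z,d}|\leqslant L(z)$ proved in the excerpt) rather than treating $a_z(m)$ as a black-box arithmetic function. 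I expect the write-up to follow Hooley's \cite[Lemma 2]{Ho00} closely here, re-deriving the needed partial-sum estimate and then running the Perron argument with RH supplying $1/\zeta(\sigma+it)\ll_\varepsilon(|t|+2)^\varepsilon$ on $\sigma\geqslant 1/2+\varepsilon$; the bookkeeping of $\delta$'s (three independent small parameters: the contour offset, the $t$-power, the truncation slack) is the only real subtlety, and choosing them consistently gives the stated clean bound.
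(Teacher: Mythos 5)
There is a genuine gap. You correctly diagnose the problem (the trivial bound gives $z^{2(1-\sigma)}$, so cancellation from the M\"obius sign in $\rho_{z,d}$ is needed, and RH enters through $1/\zeta(\sigma+it)\ll_{\varepsilon,\delta}(|t|+2)^{\delta}$ for $\sigma\geqslant 1/2+\varepsilon$), but you never supply a working mechanism for extracting that cancellation. Your primary suggestion --- exploit the factorisation $\mathbb{F}_z=\mathbb{D}_z/\zeta$ together with the classical estimate $\sum_{n\leqslant Y}\lambda_z^2(n)=YL(z)+O(z^2)$ --- cannot work: that $O(z^2)$ error term is precisely the barrier the lemma is designed to break, so it carries no information beyond the trivial bound. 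Your fallback, Perron in the $m$-variable applied to $\sum_{m}a_z(m)m^{-s}$ with $a_z(m)=\sum_{[d_1,d_2]=m}\rho_{z,d_1}\rho_{z,d_2}$, is circular (as you note yourself: the generating function on the shifted contour is $\mathbb{F}_z(s+w)$, the very object being bounded), and the real-variable alternative via partial summation on $A(M)=\sum_{m\leqslant M}a_z(m)$ produces a factor $|s|\asymp|t|$ rather than $(|t|+2)^{\delta}$, besides requiring an estimate like $A(M)\ll M^{(1+\sigma)/2}$ that you do not establish. The ``genuinely delicate step'' you isolate at the end is the entire content of the proof, and deferring it to ``follow Hooley closely'' leaves the argument incomplete.

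The actual route is structurally different. One first applies Hooley's combinatorial identity
\[
\mathbb{F}_z(s)=\sum_{u\leqslant z}\frac{\varphi(u,s)\mu^2(u)}{u^{2s}}\Gamma_z(u,s)^2,\qquad
\Gamma_z(u,s)=\sum_{\substack{d\leqslant z/u\\(d,u)=1}}\frac{\rho_{z,du}}{d^s},\qquad \varphi(u,s)=\prod_{p\mid u}(p^s-1),
\]
which localises the M\"obius oscillation inside each inner factor. Expanding the definition of $\rho_{z,du}$ writes $\Gamma_z(u,s)$ as $\frac{u\mu(u)}{\varphi(u)}\sum_{u'}\frac{\mu^2(u')}{\varphi(u')}M_{z/uu',uu'}(s)$ with $M_{D,k}(s)=\sum_{d\leqslant D,(d,k)=1}\mu(d)d\varphi(d)^{-1}d^{-s}$; the completed series $G_k(w)$ factors as $\zeta(w)^{-1}H_k(w)B(w)$, and a Perron argument in the auxiliary variable (contour shifted to $\Re s'=-\varepsilon/2$, legitimate because $G_k$ is analytic and polynomially bounded there under RH) gives $|M_{D,k}(s)|\ll_{\varepsilon,\delta}(|t|+2)^{\delta}\tau_{-1/2+\delta}(k)$, uniformly in $D$. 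Thus each $\Gamma_z(u,s)$ is $O((|t|+2)^{\delta}z^{\delta}u/\varphi(u))$ --- bounded essentially independently of $z/u$ --- and the single power $z^{1-\sigma}$ arises only from the outer sum $\sum_{u\leqslant z}u^{-\sigma}(u/\varphi(u))^2$. Without this (or an equivalent) restructuring of the double sum, the $\zeta^{-1}$ saving cannot be brought to bear on $\mathbb{F}_z(s)$ itself.
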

\begin{proof}
Following Hooley, in particular \cite[expression (43)]{Ho00}, from combinatorial manipulation one derives 
\begin{equation}
\label{FQmanipulation}
 \mathbb{F}_z(s) = \sum\limits_{ u \leqslant z} \frac{ \varphi(u,s) \mu^2(u)}{u^{2s}} \Big( \sum\limits_{ \substack{d \leqslant z/u \\ (d,u) = 1}} \frac{ \rho_{z,du}}{d^s}\Big)^2,
 \end{equation} where \[\varphi(u,s): = \prod\limits_{ p \vert u}(p^s - 1).\] Let us now describe a heuristic for the remainder of the argument. If we were to undertake a na\"{i}ve approach, bounding the inner sum of (\ref{FQmanipulation}) trivially, then one would derive
\begin{align*}
\vert\mathbb{F}_z(s)\vert &\ll (\log z)^{2} \sum\limits_{ u \leqslant z} \frac{\mu^2(u)}{u^{\sigma}}\Big(\sum\limits_{ d \leqslant z/u} \frac{1}{d^{\sigma}}\Big)^2 \\
& \ll (\log z)^{2} \sum\limits_{ u \leqslant z} \frac{\mu^2(u)}{u^{\sigma}} \Big(\frac{z}{u}\Big)^{2(1-\sigma)} \\
& \ll (\log z)^{2} z^{2(1- \sigma)}.
\end{align*}
\noindent The exponent $2$ for the power of $z^{1 - \sigma}$ would lead, ultimately, to an error term in Proposition \ref{Proposition lambdaQ squared on RH} involving $z^2$, with no hope of improving upon \eqref{equation classical estimate}. 

However, since $L(z)^{-1}\rho_{z,du}$ is a sieve weight we might reasonably expect to be able to approximate the behaviour of $L(z)^{-1}\rho_{z,du}$ by the behaviour of $\mu(du)$. Were we to make this replacement, the inner Dirichlet polynomial in (\ref{FQmanipulation}) would become \[ L(z)\mu(u)\sum\limits_{\substack{d \leqslant z/u \\ (d,u) = 1}} \frac{\mu(d)}{d^s}.\] Using RH, the Dirichlet polynomial $\sum_{d \leqslant D} \mu(d) d^{-s}$ may in turn be approximated by $\zeta(s)^{-1}$. Since RH also implies that $\vert \zeta(s)\vert^{-1} \ll_{\delta, \varepsilon}
 (\vert t\vert + 2)^{\delta}$ in this range, this leads to the agreable \begin{align*}
\vert \mathbb{F}_z(s)\vert &\ll_{\delta, \varepsilon} (\vert t\vert + 2)^{\delta}(\log z)^2 \sum\limits_{u \leqslant z}\frac{\mu^2(u)}{u^\sigma} \\
&\ll_{\delta, \varepsilon}(\vert t\vert + 2)^{\delta}z^{1- \sigma}(\log z)^2 .
\end{align*}
This would settle Lemma \ref{Lemma bound on FQ}.

What remains is for us to make these approximations precise. The main auxiliary lemma is the following.

\begin{Lemma}
\label{Lemma bound on M}
Assume RH. Let $D \geqslant 1$ and $\varepsilon \in (0,1/10)$ be reals, and let $k$ be a natural number. Define \[ M_{D,k}(s) := \sum\limits_{ \substack{ d \leqslant D \\ (d,k) = 1}} \frac{ \mu(d) d}{\varphi(d) d^s}.\] Then, if $s = \sigma + it$ with $\frac{1}{2}+ \varepsilon< \sigma < 1$, for all $\delta  >0$ we have \[ \vert M_{D,k}(s) \vert  = O_{\delta, \varepsilon}((\vert t\vert + 2)^{\delta}k^{\delta}).\]
\end{Lemma}
\noindent The error term is uniform in the choice of $D$. 
\begin{proof}
Without loss of generality we may assume that $\delta$ is small enough in terms of $\varepsilon$. Let \[ G_{k}(w) : = \sum\limits_{ \substack{ d \geqslant 1 \\ (d,k) = 1}} \frac{\mu(d) d}{\varphi(d) d^w}.\] The series defining $G_k(w)$ is absolutely convergent in the region $\Re w > 1$. Following Hooley \cite[p. 63]{Ho00}, from elementary manipulation of the Euler product we have (when $\Re w > 1$) \[ G_k(w) = \begin{cases}
\frac{1}{\zeta(w)} H_k(w) B(w) (1 + \frac{1}{2^w - 1}) & \text{ if } 2 \vert k\\
\frac{1}{\zeta(w)} H_k(w) B(w) (1 - \frac{1}{2^w - 1}) & \text{ if } 2 \nmid k, \end{cases}\] where\footnote{We thank the anonymous referee for pointing out a typo in Hooley at this point, where the $-1$ exponent is neglected in the definition of $H_k$. Consequently we've also had to correct the Euler factor corresponding to $p=2$.} \[ H_k(w): = \prod\limits_{ \substack{p \vert k \\ p \geqslant 3}} \Big( 1 - \Big(1 - \frac{1}{p}\Big)^{-1} \frac{1}{p^w}\Big)^{-1}\] and \[ B(w): = \prod\limits_{p \geqslant 3} \Big( 1 - \frac{1}{p^w (p-1)} \Big( 1 - \frac{1}{p^w}\Big)^{-1}\Big).\] Since we are assuming RH, $\zeta(w)^{-1}$ has an analytic continuation to the region $\Re w > \frac{1}{2} + \frac{\varepsilon}{2}$. Furthermore, $H_k(w)$, $B(w)$, $(1 + \frac{1}{2^{w} - 1})$ and $(1 - \frac{1}{2^w - 1})$ define holomorphic functions in the same region (as they define absolutely uniformly convergent Euler products there). By taking logarithms and Taylor expanding, in this same region we have the bound
\begin{align*}
\vert H_k(w)\vert \ll \exp(\sum\limits_{p \vert k} \frac{1}{p^{\Re w}}) \ll \exp(\sum\limits_{p \leqslant \omega(k)} \frac{1}{p^{\frac{1}{2}}}) \ll \exp(O(\omega(k)^{\frac{1}{2}})) \ll \exp(O((\log k)^{\frac{1}{2}})) \ll_{\delta} k^{\delta},
\end{align*}
\noindent where $\omega(k)$ is the number of distinct prime factors of $k$. Also, \[ \vert B(w)\vert = O_{\varepsilon}(1).\] Finally, by RH, in this region we have \[ \Big\vert \frac{1}{\zeta(w)}\Big\vert = O_{\varepsilon,\delta}((\vert \Im w\vert + 2)^\delta),\] see \cite[Theorem 14.2]{Ti86}. Therefore, we have the overall bound
\begin{equation}
\label{eq: bound on Gz}
\vert G_k(w) \vert  = O_{\varepsilon,\delta}((\vert \Im w \vert + 2)^\delta k^{\delta}).
\end{equation}

Now, pick $c$ such that $1 + \delta < \sigma + c < 2$ and, without loss of generality, assume that $D$ is a half-integer. (Indeed, shifting the range $D$ by up to $\frac{1}{2}$ would introduce an overall error of $O(1)$ to the value of $M_{D,k}(s)$, which may be absorbed into the claimed bound on $\vert M_{D,k}(s)\vert$ in the statement of the lemma). Then, by Perron's formula (see \cite[Lemma 3.12]{Ti86} for this particular formulation), for a large $T$ to be chosen later \[ M_{D,k}(s) = \frac{1}{2 \pi i} \int\limits_{ c - iT}^{c + iT} G_k(s + s^\prime) \frac{D^{s^\prime}}{s^\prime} \, ds^\prime + O_{\delta}(T^{-1}(D^{c} + D^{1 - \sigma + \delta} )).\] Now we move the contour $\Re s^\prime = c$ to the line $\Re s^\prime = -\frac{\varepsilon}{2}$. There is a main-term contribution of $G_k(s)$, coming from the pole at $s^{\prime} = 0$. We bound this directly using \eqref{eq: bound on Gz}, giving a contribution of $\ll_{\varepsilon, \delta}(\vert t\vert + 2)^{\delta}k^{\delta}$ as required. 

Regarding the error term, we get the usual sum of three integrals (one along a vertical contour with $\Re s^\prime = -\frac{\varepsilon}{2}$, the other two along horiztonal contours with $\vert \Im s^\prime \vert = T$). Note that $\frac{1}{2} + \frac{\varepsilon}{2} < \Re s - \frac{\varepsilon}{2}$, and so using the bound (\ref{eq: bound on Gz}) for $\vert G_{k}(w)\vert$ in this region we get a combined error term of 
\begin{align*}
O_{\varepsilon,\delta}(k^{\delta}D^{-\varepsilon/2} (T^\delta + (\vert t \vert +2)^{\delta})) & + O_{\varepsilon,\delta}(k^{\delta} D^{c} T^{-1}(T^\delta + (\vert t \vert +2)^{\delta}))\\& + O_\delta(T^{-1}(D^{c} + D^{1 - \sigma + \delta})).
\end{align*} The above expression is inferior to the claimed error in the lemma if we choose $T = D^{10}$ and $\delta$ is small enough depending on $\varepsilon$.  
\end{proof}

To use Lemma \ref{Lemma bound on M}, we write \[ \Gamma_{z}(u,s): = \sum\limits_{ \substack{d \leqslant z/u \\ (d,u) = 1}} \frac{\rho_{z,du}}{d^s},\] so that from (\ref{FQmanipulation}) we get 
\begin{equation}
\label{FQmanipulation2}
 \mathbb{F}_z(s) = \sum\limits_{ u \leqslant z} \frac{ \varphi(u,s) \mu^2(u)}{u^{2s}}\Gamma_{z}(u,s)^2.
\end{equation}
Then, following Hooley \cite[expression (45)]{Ho00}, by expanding the definition of $\rho_{z,du}$ we establish 
\begin{align}
\label{Gamma expanded}
\Gamma_{z}(u,s) &= \frac{u\mu(u)}{\varphi(u)} \sum\limits_{ \substack{u^\prime \leqslant z/u \\ (u^\prime,u) = 1}} \frac{\mu^2(u^\prime)}{\varphi(u^\prime)} \sum\limits_{\substack{ d \leqslant z/uu^\prime \\ (d,uu^\prime) = 1}} \frac{\mu(d) d}{\varphi(d) d^s} \nonumber \\
& = \frac{u\mu(u)}{\varphi(u)} \sum\limits_{ \substack{u^\prime \leqslant z/u \\ (u^\prime,u) = 1}} \frac{\mu^2(u^\prime)}{\varphi(u^\prime)}  M_{z/u u^\prime, uu^\prime}(s).
\end{align}
By Lemma \ref{Lemma bound on M}, we conclude that if $\frac{1}{2}+ \varepsilon < \sigma < 1$ then \[\vert \Gamma_{z}(u,s)\vert = O_{\varepsilon,\delta}((\vert t\vert +2)^{\delta} z^{\delta} u\varphi(u)^{-1}).\] Plugging this bound into (\ref{FQmanipulation2}), if $\frac{1}{2} + \varepsilon < \sigma < 1$ then \[\vert \mathbb{F}_z(s)\vert \ll_{\varepsilon, \delta} z^{\delta} (\vert t \vert + 2)^{\delta}\sum\limits_{u \leqslant z}\frac{u^2}{u^{\sigma} \varphi(u)^2} \ll_{\varepsilon, \delta}(\vert t \vert + 2)^{\delta} z^{1 - \sigma +\delta}.\] Lemma \ref{Lemma bound on FQ} is proved. 
\end{proof}

Proposition \ref{Proposition lambdaQ squared on RH} follows quickly. Indeed, pick some $c \in (1,2]$. Then, by Perron's formula \cite[Lemma 3.12]{Ti86} again, we have 
\begin{align*}
&\sum\limits_{ n \leqslant X}\lambda_z^2(n) \\
&= \frac{1}{2 \pi i} \int\limits_{c - i T}^{c+ i T} \mathbb{D}_z(s) \frac{X^{s}}{s} \, ds + O_{\delta}\Big( (\log z)^{2}X^{\delta}1_{X \in \mathbb{N}} + X^c \sum\limits_{ n \geqslant 1} \frac{\lambda_z^2(n)}{n^c} \min \Big( 1, \frac{1}{T \vert \log (X/n) \vert }\Big)\Big) \\
& = \frac{1}{2 \pi i} \int\limits_{c - i T} ^{c + i T} \zeta(s) \mathbb{F}_z(s)\frac{X^{s}}{s} \, ds + O_{\delta}( X^{c+ \delta}T^{-1} + X^{\delta}).
\end{align*}

Moving the contour to the line $\Re s = \frac{1}{2} + \varepsilon$, for some sufficiently small $\varepsilon$, we pick up a main term of $X \mathbb{F}_z(1)$ from the pole at $s=1$. It is a standard calculation in the theory of the Selberg sieve that 
\begin{equation}
\label{equation Selberg sieve main term}
\mathbb{F}_z(1) = \sum\limits_{d_1,d_2 \leqslant z} \frac{\rho_{z,d_1} \rho_{z,d_2}}{[d_1,d_2]} = L(z),
\end{equation}
\noindent see \cite[Exercise 9.3.6]{Mu06}. Therefore the main term is $X L(z)$.

Regarding the error terms, we get the usual three integrals: one up a vertical contour from $\frac{1}{2} + \varepsilon - iT$ to $\frac{1}{2} + \varepsilon + iT$, and two along horizonal contours (from $\frac{1}{2} + \varepsilon - iT$ to $c - iT$, and from $\frac{1}{2} + \varepsilon + iT$ to $c + iT$). The bound from Lemma \ref{Lemma bound on FQ} together with the Lindel\"{o}f bound
\[\vert \zeta(s)\vert \ll_{\delta} (\vert t\vert + 2)^{\delta}\] when $\Re s = \frac{1}{2} + \varepsilon$, which follows from RH, gives \[ \Big\vert \int\limits_{ 1/2 + \varepsilon + iT}^{1/2 + \varepsilon - iT} \frac{\zeta(s) \mathbb{F}_z(s) X^s}{s} ds\Big\vert = O_{\varepsilon, \delta}(X^{\frac{1}{2} + \varepsilon} z^{\frac{1}{2} - \varepsilon + \delta} T^{\delta}). \] For the horizontal integeral, we can use a trivial bound \[ \vert\mathbb{F}_z(s)\vert \ll \sum\limits_{d_1,d_2 \leqslant z} \Big\vert\frac{ \rho_{z,d_1} \rho_{z,d_2}}{[d_1,d_2]^s}\Big\vert \ll (\log z)^2 \sum\limits_{d_1,d_2 \leqslant z} \frac{1}{[d_1,d_2]^{\Re s}} \ll z^2(\log z)^2\] when $\Re s > 0$. Together with Lindel\"{o}f, we obtain\[ \Big\vert \int\limits_{\frac{1}{2} + \varepsilon + iT}^{c + iT} \frac{ \zeta(s) \mathbb{F}_z(s) X^s}{s} \, ds \Big\vert \ll_{\delta} c X^c z^2(\log z)^2 T^{-1 + \delta},\] and the same for the other horiztonal integral. Since $c = O(1)$ is fixed and $z \leqslant X$, we have a totall error of \[O_{\delta}(X^{c + 2 + \delta} T^{-1+\delta}) + O_{\varepsilon, \delta}(X^{\frac{1}{2} + \varepsilon} z^{\frac{1}{2} - \varepsilon + \delta} T^{\delta}).\] 

Picking $T = X^{100 c}$ say, and $\delta = \frac{\varepsilon}{100 c}$, we obtain \begin{equation}
\label{eq:theboundabove}
\sum\limits_{ n \leqslant X}\lambda_z^2(n) = XL(z) + O_{\varepsilon}(X^{\frac{1}{2} + 3\varepsilon} z^{\frac{1}{2}}).
\end{equation}
Since $\varepsilon$ is arbitrary, Proposition \ref{Proposition lambdaQ squared on RH}  follows after combining \eqref{eq:theboundabove} with the classical bound (\ref{equation classical estimate}). \qed \\

\section{Proof of applications}
\label{Sec:method of GGOS}
In this section we discuss the proof of Theorems \ref{Theorem Main Theorem} and \ref{Theorem lower bound on variance}. For experts, we can say that our proof consists of nothing more than substituting our stronger auxiliary estimates on $\lambda_z(n)$ into the general framework provided by G--G--\"{O}--S \cite{GGOS00} and Goldston--Yildrim \cite{GY01}. We have no desire to repeat large tracts of the existing literature, but similarly it seems remiss not to give the general reader at least some indication of how our Propositions \ref{Proposition lambdaQ correlation} and \ref{Proposition lambdaQ squared on RH} are used. What follows is, we hope, an informative balance. \\

We focus on Theorem \ref{Theorem Main Theorem}. Fix $ \alpha \in [1,2)$, let $X = T^\alpha$, and let \[ F(X,T): = \Big(\frac{T}{2 \pi} \log T \Big) F_T(\alpha).\] Since we are assuming $AP(27/53)$ we may also assume RH, as RH is equivalent to the $q=1$ form of Conjecture \ref{Conjecture Montgomery APs}. Let $\Psi_U: \mathbb{R} \longrightarrow [0,1]$ be a smooth approximation to the indicator function of the interval $[0,1]$, supported on $[-1/U, 1 + 1/U]$, with the properties that $\Psi_U(t) \in [0,1]$ and $\Psi^{(j)}(t) \ll U^j$ for $j=1,2,3...$. (We use this notation here in order to match the notation in \cite{GGOS00}, to allow for easier comparisons.) We begin with the following lemma of G--G--\"{O}--S, which is proved using the same technique of Montgomery \cite{Mo73}:

\begin{Lemma}[Lemma 1, \cite{GGOS00}]
\label{Lemma GGOS lemma}
Assume RH. If $X \gg T$ and $U = \log ^B T$ with $B>1$, then we have 
\begin{equation}
F(X,T) = \frac{1}{2 \pi X^2} I_1(X,T) + \frac{X^2}{2 \pi} I_2(X,T) + O\Big(\frac{T \log^2 T}{U}\Big) + O_{\varepsilon}\Big( \frac{X^{1 + \varepsilon}}{T}\Big),
\end{equation}
\noindent where 
\begin{equation}
I_1(X,T) = \int\limits_{-\infty}^\infty \Psi_U\Big(\frac{t}{T}\Big) \Big\vert \sum\limits_{ n \leqslant X} \Lambda(n) n^{\frac{1}{2} - it} - \int\limits_{1}^X u^{1/2 - it} \, du \Big\vert^2 \, dt
\end{equation}
\noindent and 
\begin{equation}
I_2(X,T) = \int\limits_{-\infty}^\infty \Psi_U\Big(\frac{t}{T}\Big) \Big\vert \sum\limits_{n > X} \frac{\Lambda(n)}{n^{3/2 + it}} - \int\limits_{X}^\infty u^{-3/2 - it} \, du \Big\vert^2 \, dt.
\end{equation}
\end{Lemma}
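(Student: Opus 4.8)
The plan is to run the approximate functional equation argument of Montgomery \cite{Mo73}, pushed into the regime $X\gg T$ as in the work of G--G--\"{O}--S. Write $F(X,T)=\sum_{0<\gamma,\gamma'\le T}X^{i(\gamma-\gamma')}w(\gamma-\gamma')$. Since $w(u)=4/(4+u^2)$ has Fourier transform $2\pi e^{-2|\xi|}$, it is $\tfrac{1}{2\pi}$ times the self-correlation of the Cauchy-type kernel $u\mapsto 2/(1+u^2)$, so
\[ F(X,T)=\frac{1}{2\pi}\int_{-\infty}^{\infty}|\Sigma(t)|^2\,dt,\qquad \Sigma(t):=\sum_{0<\gamma\le T}X^{i\gamma}\,\frac{2}{1+(t-\gamma)^2}. \]
First I would insert the cutoff $\Psi_U$. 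By the zero-density bound $N(v+1)-N(v)\ll\log(v+2)$ one has $\Sigma(t)\ll\log(|t|+2)$ uniformly, and $\Sigma(t)$ is negligible for $|t|\gg T$; hence replacing $\int_{\mathbb R}$ by $\int\Psi_U(t/T)\,dt$ changes $F(X,T)$ only through the two transition windows of $\Psi_U$, each of length $\asymp T/U$ and on which the integrand is $\ll\log^2 T$. This produces exactly the error $O(T\log^2T/U)$ and reduces matters to $\frac{1}{2\pi}\int\Psi_U(t/T)|\Sigma(t)|^2\,dt$.

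The core is to evaluate $\Sigma(t)$. Invoking RH to write $\rho=\tfrac12+i\gamma$, and using $\frac{2}{1+(t-\gamma)^2}=\frac{1}{(\frac32+it)-\rho}-\frac{1}{(-\frac12+it)-\rho}$ together with $X^{i\gamma}=X^{-1/2}X^\rho$, gives
\[ X^{1/2}\Sigma(t)=\sum_{\gamma}X^\rho\Bigl(\frac{1}{(\tfrac32+it)-\rho}-\frac{1}{(-\tfrac12+it)-\rho}\Bigr), \]
a sum over zeros which I would evaluate by moving a contour integral of $-\tfrac{\zeta'}{\zeta}(w)X^w$ against this rational kernel (extending first to all zeros, at a cost already inside the $O(T\log^2T/U)$ term). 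The pole of $\zeta$ at $w=1$ produces, after partial summation, the main terms $\int_1^X u^{1/2-it}du$ and $\int_X^\infty u^{-3/2-it}du$; the kernel pole near $\Re w=\tfrac32$ brings in the Dirichlet series $-\tfrac{\zeta'}{\zeta}(\tfrac32+it)=\sum_n\Lambda(n)n^{-3/2-it}$, whose tail over $n>X$ is the sum appearing in $I_2$; and the kernel pole near $\Re w=-\tfrac12$, together with the approximate formula for $\tfrac{\zeta'}{\zeta}$ to the left of the critical line, brings in $\sum_{n\le X}\Lambda(n)n^{1/2-it}$ --- here the hypothesis $X\gg T$ is essential, since for $X\ge|t|$ the complementary ``dual'' sum in that approximate formula is empty. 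After keeping track of, and cancelling, the remaining pieces of these residues and of the residual line integrals, and bounding the trivial zeros and the far/truncated parts of the contour (negligible once the truncation height is a large power of $X$), one is left with: $\Sigma(t)$ equals, up to unimodular factors and an acceptably small error, a fixed linear combination of
\[ \frac1X\Bigl(\sum_{n\le X}\Lambda(n)n^{1/2-it}-\int_1^X u^{1/2-it}du\Bigr)\quad\text{and}\quad X\Bigl(\sum_{n>X}\Lambda(n)n^{-3/2-it}-\int_X^\infty u^{-3/2-it}du\Bigr). \]

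Finally I would square, integrate against $\Psi_U(t/T)$, and collect: the diagonal terms give $\frac{1}{2\pi X^2}I_1(X,T)+\frac{X^2}{2\pi}I_2(X,T)$, the error from the previous approximation contributes $O_\varepsilon(X^{1+\varepsilon}/T)$, and the cross term
\[ \int\Psi_U\!\Bigl(\tfrac tT\Bigr)\Bigl(\textstyle\sum_{n\le X}\Lambda(n)n^{1/2-it}-\int_1^X u^{1/2-it}du\Bigr)\overline{\Bigl(\sum_{n>X}\Lambda(n)n^{-3/2-it}-\int_X^\infty u^{-3/2-it}du\Bigr)}\,dt \]
must be shown to be $O_\varepsilon(X^{1+\varepsilon}/T)$. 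For this one writes each factor as a Stieltjes integral against $d(\psi(u)-u)$; integrating in $t$ against $\Psi_U(t/T)$ yields a kernel in $\log(v/u)$ that decays faster than any power of $T\log(v/u)$, confining both variables to an interval of length $\ll X^{1+\varepsilon}/T$ about $X$, after which Cauchy--Schwarz and the Montgomery--Vaughan mean value bound $\int\Psi_U(t/T)\bigl|\sum_n a_n n^{-it}\bigr|^2dt\ll(T+\max n)\sum_n|a_n|^2$ (with $T+\max n\ll X$) furnish the required saving. I expect this cross-term estimate, and especially the bookkeeping in the contour step --- verifying that the unwanted residue contributions cancel and that the approximate-formula error is controlled uniformly for $X\gg T$ --- to be the main technical obstacle; once the explicit-formula identity for $\Sigma(t)$ is in hand, the remainder is routine.
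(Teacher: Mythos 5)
Your proposal is correct and follows essentially the same route as the paper's source: the lemma is not proved in this paper but quoted from Goldston--Gonek--\"{O}zl\"{u}k--Snyder, whose argument is exactly the Montgomery approximate-functional-equation scheme you reconstruct --- the partial-fraction identity $\frac{2}{1+(t-\gamma)^2}=\frac{1}{(3/2+it)-\rho}-\frac{1}{(-1/2+it)-\rho}$ fed into the explicit formula, the smooth cutoff costing $O(T\log^2 T/U)$, retention of the pole-at-one integrals $\int_1^X u^{1/2-it}\,du$ and $\int_X^\infty u^{-3/2-it}\,du$ inside $I_1,I_2$, and the cross term localised by the decay of $\widehat{\Psi_U}$ to a window of length $\ll X^{1+\varepsilon}/T$ and then bounded by Cauchy--Schwarz and mean-value estimates. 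Your sketch defers the residue bookkeeping, but the pieces you identify (including where $X\gg T$ is used to suppress the $X^{-1+it}\log\tau$ contribution that dominates in Montgomery's original range $\alpha<1$) are the right ones.
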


The action then shifts to estimating the integrals $I_1(X,T)$ and $I_2(X,T)$. If one expands the definitions, one quickly confronts the problem of having to control the values of the correlations $\sum_{n \leqslant N}\Lambda(n)\Lambda(n+k)$. To get around this issue, the idea of G--G--\"{O}--S is to note that $I_1(X,T)$ is at least 
\begin{align*}
\int\limits_{-\infty}^\infty \Psi_U\Big(\frac{t}{T}\Big) \Big\vert \sum\limits_{ n \leqslant X} \Lambda(n) n^{\frac{1}{2} - it} - &\int\limits_{1}^X u^{1/2 - it} \, du \Big\vert^2 \, dt \, - \\
&\int\limits_{-\infty}^\infty \Psi_U\Big(\frac{t}{T}\Big) \Big\vert \sum\limits_{n \leqslant X} \lambda_z(n) n^{\frac{1}{2} - it}  -\sum\limits_{ n \leqslant X} \Lambda(n) n^{\frac{1}{2} - it} du \Big\vert^2 \, dt,
\end{align*} and therefore, by expanding out the squares and rearranging the terms, to conclude that $I_1(X,T)$ is at least
\begin{align}
\label{eq: lower bound on I1}
 2 \Re\int\limits_{ -\infty}^{\infty} \Psi_U\Big(\frac{t}{T}\Big) \Big( \sum\limits_{n \leqslant X} \Lambda(n) n^{1/2 - it} - \int\limits_{1}^X u^{1/2 - it} \, du \Big) \Big( \overline{ \sum\limits_{n \leqslant X} \lambda_z(n) n^{1/2 - it} - \int \limits_{1}^X u^{1/2 - it} \, du }\Big) \, dt \nonumber\\
- \int\limits_{-\infty}^{\infty} \Psi_U\Big(\frac{t}{T}\Big) \Big\vert \sum\limits_{n \leqslant X}\lambda_z(n) n^{1/2 - it} - \int\limits_{1}^X u^{1/2 - it} \, du \Big\vert^2 \, dt.
\end{align}
\noindent There is a similar expression for $I_2(X,T)$. Expanding out the brackets, one sees that the relevant correlations are now $\sum_n\Lambda(n)\lambda_z(n)$, $\sum_n\Lambda(n)\lambda_z(n+k)$, $\sum_n\lambda_z(n)^2$, and $\sum_n\lambda_z(n)\lambda_z(n+k)$, which promise to be more easily tractable than $\sum_n\Lambda(n) \Lambda(n+k)$. 

For example, one term in the expansion is \[ \int\limits_{-\infty}^{\infty} \Psi_U\Big( \frac{t}{T}\Big) \sum\limits_{m,n \leqslant X} \Lambda(n) \lambda_z(n) (mn)^{\frac{1}{2}} \Big(\frac{n}{m}\Big)^{it} \, dt,\] coming from the first terms in the first integral. We get a diagonal term from $m=n$, namely $ T \widehat{\Psi_U}(0)\sum_{n \leqslant X} \Lambda(n) \lambda_z(n) n$, which can be estimated using partial summation and estimate \eqref{eq: hyp 3} below. For the terms $m \neq n$, relabelling $m = n+k$ and rearranging we get \[ \sum\limits_{1 \leqslant \vert k\vert \leqslant X} \sum\limits_{\max(0,- k) < n \leqslant \min(X,X-k)}\Lambda(n) \lambda_z(n+k) n\Big(1 + \frac{k}{n}\Big)^{\frac{1}{2}}\widehat{\Psi_U}(T \log(1 + \frac{k}{n})).\] The fast decay in the Fourier transform $\widehat{\Psi_U}$ can be used to truncate $k$, and an estimate can be derived using partial summation and Lemma \ref{Lemma: correlations on AP} below. 

The details of an extremely general formulation of this approach were worked out by Goldston--Gonek \cite{GG98}: Corollary 1 of \cite{GG98} provides us with a general way of taking bounds from diagonal and off-diagonal correlations of the coefficients of Dirichlet polynomials and using them to generate estimates for expressions such as (\ref{eq: lower bound on I1}). More precisely, this corollary applies to estimating \[ \int\limits_{ - \infty}^{\infty} \Psi_U\Big(\frac{t}{T}\Big)\Big( \sum\limits_{n \leqslant X} a_n n^{-\sigma - it} - \int\limits_{1}^{X} u^{-\sigma - it} \, du\Big)\overline{\Big(\sum\limits_{n \leqslant X} b_n n^{-\sigma - it} - \int\limits_{1}^{X} u^{-\sigma - it} \, du\Big)} \, dt,\] provided the coefficients $a_n,b_n$ satisfy various hypotheses on the summatory functions $\sum_{n \leqslant X} a_n$, $\sum_{n \leqslant X}b_n$, as well as on the correlations $\sum_{n \leqslant X} a_n b_{n+h}$. These hypotheses are given by $(A1)$ to $(A4)$ in \cite{GG98}. 

To formulate these hypotheses for $\Lambda(n)$ and $\lambda_z(n)$, let 
\begin{equation}
\label{eq: def of z}
z = X^\nu
\end{equation} for some parameter $\nu$ to be chosen later. The bound 

\begin{equation}
\label{eq: hyp 1}
\sum\limits_{n \leqslant X} \Lambda(n) = X + O_\varepsilon(X^{1/2 + \varepsilon})
\end{equation}
follows from RH. The bounds
\begin{equation}
\label{eq: hyp 2}
\sum\limits_{ n \leqslant X} \lambda_z(n) = X + O(X^{\nu})
 \end{equation} 
and

\begin{equation}
\label{eq: hyp 3}
\sum\limits_{n \leqslant X} \Lambda(n) \lambda_z(n) = X L(z) + O_{\varepsilon}(X^{\nu + \varepsilon})
\end{equation}
are elementary and may be found in \cite{GGOS00}. Our estimates for $\sum_{n \leqslant X-k} \lambda_z(n) \lambda_z(n+k)$ and $\sum_{n \leqslant X} \lambda_z(n)^2$ were given in Propositions \ref{Proposition lambdaQ correlation} and \ref{Proposition lambdaQ squared on RH}.  Finally, we note a simple consequence of the conjecture $AP(\theta)$. 
\begin{Lemma}
\label{Lemma: correlations on AP}
Let $\theta \in [0,1]$, and assume conjecture $AP(\theta)$. Let $X \geqslant 2$, let $1 \leqslant \vert k\vert\leqslant X$ and let $X_1 = \max(0,-k)$ and $X_2 = \min(X,X-k)$. Let $\nu \in [0, \theta)$ and let $z$ satisfy \eqref{eq: def of z}. Then we have \[\sum\limits_{X_1 < n \leqslant X_2} \lambda_z(n) \Lambda(n+k) = \mathfrak{S}(k)(X-\vert k\vert)  + O_{\varepsilon,\theta}(X^{\max(\frac{1}{2} + \frac{\nu}{2}, 1- \nu) + \varepsilon}),\] 
\end{Lemma}
\begin{proof}
We have
\begin{align*}
\sum\limits_{X_1 < n \leqslant X_2} \lambda_z(n) \Lambda(n+k) &= \sum\limits_{d \leqslant z} \rho_{z,d} \sum\limits_{\substack{X_1 < n \leqslant X_2 \\ d \vert n}} \Lambda(n+k) \\
&= (X- \vert k\vert)\sum\limits_{\substack{d \leqslant z \\ (d,k) = 1}} \frac{ \rho_{z,d}}{\varphi(d)} + O_{\varepsilon, \theta}\Big(X^{1/2 + \varepsilon} \sum\limits_{ d \leqslant z} \frac{ \vert \rho_{z,d}\vert}{d^{1/2}}\Big) \\
& = \mathfrak{S}(k)(X-\vert k\vert) + O\Big( \frac{k \tau(k) X}{\varphi(k) z}\Big) + O_{\varepsilon,\theta}(X^{\frac{1}{2} + \varepsilon} z^{\frac{1}{2}}L(z)) \\
& = \mathfrak{S}(k)(X-\vert k\vert) + O_{\varepsilon,\theta}(X^{\max(\frac{1}{2} + \frac{\nu}{2}, 1- \nu) + \varepsilon})
\end{align*}
\noindent as claimed. To estimate the sum of $\rho_{z,d}\varphi(d)^{-1}$ we have used the calculation from the proof of \cite[Lemma 1]{Go95}, which in turn is taken from \cite[p. 57]{HB85}.  
\end{proof}
% Going through GG mean values from first principles 

%Consider first the term \[ \int\limits_{-\infty}^\infty \Psi_U\Big(\frac{t}{T}\Big) \sum\limits_{\substack{n \leqslant X \\ m \leqslant X}} \Lambda(n) \lambda_Q(m) n^{\frac{1}{2}} m^{\frac{1}{2}} (n/m)^{it} \, dt + c.c..\] This is equal to \[2T \widehat{\Psi_U}(0) \sum\limits_{n \leqslant X} \Lambda(n) \lambda_Q(n) + 2T \Re \sum\limits_{ 1 \leqslant \vert h \vert \leqslant X} \sum\limits_{N_1 <n \leqslant N_2} \Lambda(n+h) \lambda_Q(n) n \Big( 1 + \frac{h}{n}\Big)^{\frac{1}{2}} \widehat{\Psi_U}\Big(\frac{T}{2 \pi} \log\Big( 1 + \frac{h}{n}\Big)\Big),\] where \[ N_1 = \max(0,-h), \qquad N_2 = \min(X,X-h).\] 

%If $\vert h\vert \geqslant X T^{-1 + \varepsilon}$, then because of the decay in $\widehat{\Psi_U}$ the contribution from these terms is negligible. So we may replace the second term by \[ 2T \Re \sum\limits_{ 1 \leqslant \vert h \vert \leqslant XT^{-1 + \varepsilon}} \sum\limits_{N_1 <n \leqslant N_2} \Lambda(n+h) \lambda_Q(n) n \Big( 1 + \frac{h}{n}\Big)^{\frac{1}{2}} \widehat{\Psi_U}\Big(\frac{T}{2 \pi} \log\Big( 1 + \frac{h}{n}\Big)\Big).\] This is equal to \[2T \Re \sum\limits_{ 1 \leqslant \vert h \vert \leqslant XT^{-1 + \varepsilon}} \sum\limits_{N_1 <n \leqslant N_2} \Lambda(n+h) \lambda_Q(n) n \widehat{\Psi_U}\Big(\frac{T}{2 \pi} \log\Big( 1 + \frac{h}{n}\Big)\Big) + O(X^3 T^{-1 + \varepsilon}).\] 

%It is to this expression that we will apply a more intricate estimation that GGOS \cite{GGOS00}. Indeed, 

\noindent Plugging everything into Corollary 1 of \cite{GG98}, as applied to the integrals in (\ref{eq: lower bound on I1}), we have
\begin{align}
\label{eq: I1 after Corollary 1}
I_{1}(X,T) \geqslant & \widehat{\Psi_U}(0) T\Big(2 \sum\limits_{n \leqslant X} \Lambda(n) \lambda_z(n)n - \sum\limits_{n \leqslant X} \lambda_z^2(n)n \Big) \nonumber\\
& + 4 \pi \Big( \frac{T}{2\pi}\Big)^3 \int\limits_{T/2\pi X}^\infty \Big(\sum\limits_{ h \leqslant 2 \pi X v/T} \mathfrak{S}(h) h^2 \Big) \Re \widehat{\Psi_U}(v) \frac{1}{v^3} \, dv \nonumber\\
& - 4 \pi \Big(\frac{T}{2 \pi}\Big)^3 \int\limits_{XT^\varepsilon/2\pi}^{\infty} \Big( \int\limits_{0}^{2 \pi Xv/T} u^2 \, du \Big) \Re \widehat{\Psi_U}(v) \frac{1}{v^3} \, dv \nonumber\\
& + O_{\varepsilon}(T^{-1} X^{3 + \varepsilon}) + O_{\varepsilon}(X^{5/2 +  \varepsilon}) + O_{\varepsilon}(X^{\varepsilon}) + O_{\varepsilon,\theta}(X^{2 + \varepsilon + \omega}),
\end{align}
where 
\begin{align*}
\omega &= \max\Big(\frac{1}{2}, \nu, \max\Big(1- \nu, \frac{1}{2} + \frac{ \nu}{2}\Big), \max\Big(1-\nu,\min\Big(2 \nu, \frac{47}{74} + \frac{ 53 \nu}{74}\Big)\Big)\Big)\\
& =  \max\Big( \nu, 1- \nu, \frac{1}{2} + \frac{ \nu}{2}, \min\Big(2 \nu, \frac{47}{74} + \frac{ 53 \nu}{74}\Big)\Big).
\end{align*}

The size of the main terms in (\ref{eq: I1 after Corollary 1}) is calculated in \cite[Section 7]{GGOS00}. Here, provided that the error term in Proposition \ref{Proposition lambdaQ squared on RH} is at most $X$, these terms are shown\footnote{We feel we should point out that there is a confusing misprint in this section of \cite{GGOS00}, in the display equation before \cite[equation (7.3)]{GGOS00}, which is incorrect by a factor of $2$. But this factor is then corrected by the end of the calculation in \cite{GGOS00}.} to have a combined size of 
\begin{equation}
\label{eq: main term I1}
\frac{1}{2} X^2T\log (Tz/X) + O(TX^2(\log X)^{2/3} \log\log T).
\end{equation} So the error term in (\ref{eq: I1 after Corollary 1}) will be negligible provided \[ \nu < 1\] and \[\alpha \omega < 1.\]

A similar argument may be used to bound $I_2(X,T)$ from below, deferring to Corollary 2 of \cite{GG98} instead of Corollary 1. We direct the reader to \cite{GGOS00} and \cite{GG98} for the details. The only aspect that it is important for us to note is that the parameter $\eta$ from \cite{GGOS00}, which is taken to be $1/2 - \varepsilon$ in expression (6.3) of that paper, may in fact be taken to be $1$. (Here the parameter $\eta$ comes from hypothesis $(A3)$ of \cite{GG98}, where $X^\eta$ measures the range of uniformity in $k$ for the error terms in the estimates for the correlations $\sum_n \lambda_z(n) \lambda_z(n+k)$ and $\sum_n \lambda_z(n) \Lambda(n+k)$.)

One derives that, uniformly for all $T \ll X \ll T^{2-\varepsilon}$,
\begin{equation}
\label{eq:main term I2}
I_2(X,T) \geqslant \frac{T}{2 X^2} \log(Tz/X) + O\Big(\frac{T}{X^2} (\log X)^{2/3} \log\log T\Big) + E,
\end{equation}
\noindent where $E$ is an error term of size at most  \[O_{\varepsilon}(T^{-1} X^{-1 + \varepsilon}) + O_{\varepsilon}(X^{-2 + \omega + \varepsilon}).\] Thus the main term dominates provided that \[ \alpha \omega < 1.\] 

Therefore from Lemma \ref{Lemma GGOS lemma} we conclude that 
\begin{equation}
\label{eq: lower bound}
F_T(\alpha) \geqslant (1 - \alpha(1 - \nu)) + o_{\theta}(1)
\end{equation}
as $T \rightarrow \infty$, provided that 
\begin{equation}
\label{eq: constraints}
\nu < \theta, \text{ and } \omega < \frac{1}{\alpha}.
\end{equation}

All that remains is to maximise the lower bound (\ref{eq: lower bound}) subject to the constraints (\ref{eq: constraints}). If $\theta \geqslant 27/53$, a straightforward calculation yields 
\begin{equation}
\label{eq: solution}
\nu = \frac{74}{53 \alpha} - \frac{47}{53} - \varepsilon
\end{equation}
as a solution, provided $1 \leqslant \alpha \leqslant 95/94 - \varepsilon$. This should be contrasted with \cite{GGOS00}, in which the authors take $z = T^{1/2 - \varepsilon}$, i.e. $\nu = \frac{1}{2 \alpha} - \varepsilon$, throughout, which is a smaller threshold. However, when $\alpha = 95/94$ we have $\frac{74}{53 \alpha} - \frac{47}{53} = \frac{1}{2 \alpha}$, i.e. the bound \eqref{eq: solution} collapses to the choice in \cite{GGOS00} at the end point of the interval. 

Plugging this value \eqref{eq: solution} of $\nu$ into expression (\ref{eq: lower bound}), we derive Theorem \ref{Theorem Main Theorem}.\qed\\

The proof of Theorem \ref{Theorem lower bound on variance} is a similarly straightforward adaptation of \cite{GY01}, easier even, as one does not need to defer to mean value estimates proved elsewhere. 

In the notation of \cite{GY01}, $R$ is used for our parameter $z$, and we take $q=1$. The key expression from \cite{GY01} is (4.36) of that paper, in which there are several error terms. It may be easily seen that $AP(27/53)$, Proposition \ref{Proposition lambdaQ correlation} and Proposition \ref{Proposition lambdaQ squared on RH} imply that we may remove the \[O(X^{\frac{1}{2}} h^{\frac{3}{2}} R \log^2 X) + O(h^2 R^2)\] error term found there, and replace it with a single \[O_{\varepsilon}(h^2 X^{\frac{47}{74} + \varepsilon} R^{\frac{53}{74} + \varepsilon})\] term, provided $R < X^{\theta}$. Provided $h \leqslant X^{\frac{1}{95} - \varepsilon}$, if one chooses \[ R = \frac{X^{\frac{27}{53} -\varepsilon}}{h^{\frac{74}{53}}}\] then these error terms are of a lower order of magnitude than the main term $xh \log(R/h)$ (which has order of magnitude $hX\log X$). So assuming $AP(27/53)$ and plugging in this value of $R$ into expression (4.36) of \cite{GY01}, one achieves the lower bound in Theorem \ref{Theorem lower bound on variance}. \qed\\

Again, let us note that the choice of $R$ in \cite{GY01}, namely $R = X^{\frac{1}{2} - \varepsilon} h^{-\frac{1}{2}}$, agrees with our choice at the interval end-point $h = X^{\frac{1}{95}}.$\\

\section{Comparison with the alternative hypothesis}
\label{Sec: AH} 
In this short concluding section, we will reflect upon how the various thresholds that we used in studying $F_T(\alpha)$ interact with the so-called Alternative Hypothesis. This hypothesis (as formulated in expression (2.2) of \cite{FGL14}, say) contends that the distribution of the imaginary parts of the zeros of the Riemann zeta function is exceedingly regular. More precisely, AH states that if $\gamma, \gamma^\prime \geqslant T_0$ are such imaginary parts then \[ \widetilde{\gamma} - \widetilde{\gamma^\prime} \in \frac{1}{2} \mathbb{Z},\] where $\widetilde{\gamma}: = \frac{1}{2 \pi}\gamma \log( \gamma/2\pi)$ is the normalised version of $\gamma$. (Certain formulations, see \cite[Conjecture 2.2]{LR19}, allow $ \widetilde{\gamma_j} - \widetilde{\gamma}_{j+1} \in \frac{1}{2} \mathbb{Z} + o_{j \rightarrow \infty}(1)$, where $\gamma_1,\gamma_2,\gamma_3,\dots$ are the positive imaginary parts given in ascending order.) This distribution would contradict Conjecture \ref{Conjecture Montgomery pair corr}. Indeed, under AH, it is immediate that the function $F_T(\alpha)$ is periodic with period $2$. Therefore Theorem \ref{Theorem Montgomery first theorem} implies that under AH the limit $\lim_{T\rightarrow \infty}F_T(\alpha)$ is determined. In particular, expression (\ref{eq: AH FT}) would hold (contradicting Conjecture \ref{Conjecture Montgomery pair corr}). 

However, as is proven in \cite{LR19}, AH is nonetheless consistent with everything that is presently known about the correlations of the zeros (both pair correlations and higher correlations). 

A sufficiently strong disproof of AH would rule out the existence of exceptional zeros for Dirichlet $L$-functions. This fact was known to Montgomery, and is remarked upon at the end of \cite{Mo73}, but the full proof doesn't seem to have been written down until Conrey--Iwaniec \cite{CI02}. Given that the results of our paper are dependent on strong uniformity for the distribution of primes in arithmetic progressions (so in particular they assume the non-existence of exceptional zeros), disproving AH using our methods would say nothing about the exceptional zero problem. However, we nonetheless found it interesting to observe what correlation estimates would be necessary in order to be able to conclude that $\liminf_{T\rightarrow \infty} F_T(\alpha) \geqslant 2 - \alpha$ for $\alpha \in [1,2)$, which is the bound given by (\ref{eq: AH FT}).\\

Let us assume RH, and also write $X_1,X_2$ as in Proposition \ref{Proposition lambdaQ correlation}. Suppose first that $AP(1)$ holds, and also assume that the weight $\lambda_z(n)$ is as well-distributed in arithmetic progressions as we assume $\Lambda(n)$ to be, i.e. assume that if $z = X^{\nu}$ then \[ \sum\limits_{ X_1 < n \leqslant X_2} \lambda_z(n) \lambda_z(n+k) = \mathfrak{S}(k)(X-\vert k\vert) + O_{\varepsilon}\Big(\frac{ \tau(k) k X}{\varphi(k) z}\Big) + O_{\varepsilon}(X^{\frac{1}{2} + \frac{\nu}{2} + \varepsilon}),\] for all $\nu < 1$, for all $1 \leqslant \vert k\vert \leqslant X$. Then the G--G--\"{O}--S method shows that, for each fixed $\alpha \in [1,3/2)$, \[\liminf \limits_{T \rightarrow \infty } F_T(\alpha) \geqslant 3 - 2 \alpha.\] 

This bound still vanishes at $\alpha = 3/2$. In order to approach the AH bound, and move beyond $3/2$, one would need to assume some extra cancellation. Of course, as we remarked in the introduction, if for some $\eta \geqslant 1/2$ one goes as far as assuming asymptotics of the form \begin{equation}
\label{eq: twin prime necessity}
\sum\limits_{X_1 \leqslant n \leqslant X_2} \Lambda(n)\Lambda(n+k) = \mathfrak{S}(k)(X - k) + O_{\varepsilon}(X^{\eta + \varepsilon}),
\end{equation}
uniform in $1 \leqslant \vert k \vert \leqslant X^{1- \eta}$, then one would resolve Conjecture \ref{Conjecture Montgomery pair corr} in the range $\alpha < \eta^{-1}$. (This follows from the mean value theorems of Goldston-Gonek \cite{GG98}, for instance, or from Goldston-Montgomery \cite{GM87}.) However, suppose one only assumed results on the cross-correlations of $\lambda_z$ and $\Lambda$, and on the correlations of $\lambda_z$ with itself. Suppose we had the following:
\begin{equation}
\label{eq: assumption to get AH bound}
\sum\limits_{ X_1 < n \leqslant X_2} \lambda_z(n) \Lambda(n+k) = \mathfrak{S}(k) (X - \vert k\vert )  + O_{\varepsilon}(z X^{\varepsilon})
\end{equation} 
and
\begin{equation}
\label{eq: assumption to get AH bound 2}
\sum\limits_{ X_1 < n \leqslant X_2} \lambda_z(n) \lambda_z(n+k) = \mathfrak{S}(k) (X - \vert k\vert ) + O\Big( \frac{\tau(k) k X}{\varphi(k) z} \Big) + O_{\varepsilon}(z X^{\varepsilon})
\end{equation} 
for all $z > X^{1/2}$, i.e. an error term that matches, up to an $X^{\varepsilon}$ factor, the known error term for the simpler sum $\sum_{n \leqslant X} \lambda_z(n)$ from \eqref{eq: hyp 2}. Then the method of Section \ref{Sec:method of GGOS} shows that we may take $z = T^{1 - \varepsilon}$, i.e. $\nu = \frac{1}{\alpha} - \varepsilon$, and establish that \[ F_T(\alpha) \geqslant 1 - \alpha(1 - \nu) - o_{\varepsilon}(1) \geqslant 2 - \alpha - \varepsilon - o_{\varepsilon}(1)\] as $T \rightarrow \infty$. And since $\varepsilon$ is arbitrary, this establishes $\liminf_{T\rightarrow \infty} F_T(\alpha) \geqslant 2 - \alpha$, i.e. a lower bound that matches AH.

Thus it seems reasonable to describe Theorem \ref{Theorem GGOS} of G--G--\"{O}--S as `half' the AH bound. Indeed, G--G--\"{O}--S work with a sieving level $z = T^{\frac{1}{2} - \varepsilon}$; if one could double the exponent and take $z = T^{1- \varepsilon}$ then one could achieve the AH bound. \\

The bound \eqref{eq: assumption to get AH bound 2} would follow from square-root cancellation in the Type II sums \eqref{eq: before Bettin Chandee}. However, to prove \eqref{eq: assumption to get AH bound} -- even assuming $AP(1)$ -- one would nonetheless need to detect extra cancellations in the sum \[ \sum\limits_{ d \leqslant z} \rho_{z,d} \Big(\sum\limits_{ \substack{ X_1 < n \leqslant X_2 \\ d\vert n}} \Lambda(n+k) - 1_{(k,d) = 1} \frac{(X - \vert k\vert)}{\varphi(d)}\Big)\] coming from the oscillations in $\rho_{z,d}$, above and beyond the cancellation from Conjecture \ref{Conjecture Montgomery APs}. For $z$ just a little larger than $X^{1/2}$, there is of course a history of using `well-factorability' of certain linear sieve weights $\rho_{z,d}$ to prove better distribution in arithmetic progressions than one could otherwise prove (the celebrated work of Bombieri--Friedlander--Iwaniec \cite{BFI86}, say, and more modern work by Maynard \cite{Ma20}). To make progress along these lines one would need to replace the Selberg weight $\lambda_{z,d}$ with a different weight. Yet, seeing as we are not sieving but rather taking `major arc approximations' to $\Lambda(n)$, there are fewer choices of weights at our disposal. In \cite[p.367]{Go95} Goldston remarks that certain other difficulties arise when using a weight such as $\sum_{d \vert n: d \leqslant z} \mu(d) \log(z/d)$ (concerning the approximation of the singular series for the correlations).

It remains a subject for future research to establish whether there is a range of $\alpha$ for which the weight $\sum_{d \vert n: d \leqslant z} \mu(d) \log(z/d)$, in combination with B--F--I machinery, yields a direct improvement to Theorem \ref{Theorem GGOS} -- only assuming GRH.

\bibliographystyle{plain}
\bibliography{paircorr}
\end{document}